\title{Ramsey's theorem for pairs, \\ collection, and proof size}
\author{Leszek Aleksander Ko\l odziejczyk\\
 Institute of Mathematics\\
 University of Warsaw\\
 Banacha~2\\
 02-097 Warszawa, Poland\\
 E-mail: \sf{lak@mimuw.edu.pl}\and
Tin Lok Wong\\
 Department of Mathematics\\
 National University of Singapore\\
 10~Lower Kent Ridge Road\\
 Singapore 119076\\
 E-mail: \sf{matwong@nus.edu.sg}\and
Keita Yokoyama\footnote{corresponding author}\\
 School of Information Science\\
 Japan Advanced Institute of Science and Technology\\
 1-1 Asahidai\\
 Nomi, Ishikawa 923-1292, Japan\\
 E-mail:  \sf{y-keita@jaist.ac.jp}}
\date{}
\theoremstyle{definition}
\newtheorem{thm}{Theorem}[section]
\newtheorem{defn}[thm]{Definition}
\newtheorem{lem}[thm]{Lemma}
\newtheorem{prop}[thm]{Proposition}
\newtheorem{cor}[thm]{Corollary}
\newtheorem{eg}[thm]{Example}
\theoremstyle{remark}
\newtheorem*{remark}{Remark}
\newtheorem*{claim*}{Claim}
\newlength\@templen
\newcommand{\fitwd}[3][c]{%
 \settowidth{\@templen}{#3}%
 \makebox[\@templen][#1]{#2\vphantom{#3}}%
}
\long\def\ifblank#1{%
 \@ifblank#1\@sep\@sep\@secondoftwo\@firstoftwo\@sep\@sep
}
\long\def\@ifblank#1#2\@sep#3#4#5\@sep\@sep{#4}
\renewcommand{\exp}{\mathnoun{exp}}
\let\leq\leqslant
\let\geq\geqslant
\let\le\leq
\let\ge\geq
\let\phi\varphi
\renewcommand{\today}{\number\day\nobreakspace\ifcase\month\or
  January\or February\or March\or April\or May\or June\or
  July\or August\or September\or October\or November\or December\fi,
  \number\year}
\let\partref\eqref
\let\mathnoun\mathrm
\newcommand{\defm}[1]{\emph{#1}}
\newcommand{\hyp}{\text{-}}
\newcommand{\ldquot}{\mathopen\text{``}}
\newcommand{\rdquot}{\mathclose\text{''}}
\newcommand{\dquot}[1]{{\ldquot#1\rdquot}}
\newcommand{\qand}{\quad\text{and}\quad}
\newcommand{\defeq}{\mathrel{\mathop:}\mathrel{\mkern-1.2mu}=}
\newcommand{\parto}{\rightharpoonup}
\newcommand{\IN}{\mathbb{N}}
\newcommand{\lang}{\mathcal{L}}
\newcommand{\proves}{\vdash}
\newcommand{\then}{\rightarrow}
\newcommand{\nsc}{\leftrightarrow}
\DeclareMathOperator*{\bigwwedge}
 {\mathchoice{\bigwedge\mkern-16.5mu\bigwedge}
             {\bigwedge\mkern-12mu\bigwedge}
             {\bigwedge\mkern-13mu\bigwedge}
             {\bigwedge\mkern-11mu\bigwedge}}
\newcommand{\elemsub}{\preccurlyeq}
\newcommand{\elemext}{\succcurlyeq}
\newcommand{\elemsubneq}{\precneqq}
\newcommand{\Def}{\mathnoun{Def}}
\newcommand{\fa}[2]{\forall{#1}\, {#2}}
\newcommand{\ex}[2]{\exists{#1}\, {#2}}
\newcommand{\xfa}[1]{\forall{#1}\, }
\newcommand{\exi}[1]{\ex{{!}#1}}
\newcommand{\fain}[3]{\forall{#1{\in}{#2}}\, {#3}}
\newcommand{\exin}[3]{\exists{#1{\in}{#2}}\, {#3}}
\newcommand{\xfain}[2]{\forall{#1{\in}{#2}}\, }
\newcommand{\forces}{\Vdash}
\newcommand{\nforces}{\nVdash}
\newcommand{\defd}{\mathclose\downarrow}
\newcommand{\cname}[1]{\check{#1}}
\newcommand{\extendseq}{\trianglelefteqslant}
\newcommand{\faexteq}[4][]{%
 \ifblank{#1}
  {\forall{#2{\extendseq}#3}\ {#4}}
  {\forall{#2{\extendseq_{#1}}#3}\ {#4}}%
}
\newcommand{\exexteq}[4][]{%
 \ifblank{#1}
  {\exists{#2{\extendseq}#3}\ {#4}}
  {\exists{#2{\extendseq_{#1}}#3}\ {#4}}%
}
\newcommand{\Cond}[1][]{\ifblank{#1}{\mathnoun{Cond}}{\mathnoun{Cond}_{#1}}}
\newcommand{\ee}{\mathnoun{e}}
\newcommand{\eextneq}{\supsetneq_{\ee}}
\newcommand{\tuple}[1]{\langle#1\rangle}
\newcommand{\nmrl}[1]{\underline{#1}}
\newcommand{\Lone}{\lang_{1}}
\newcommand{\Ltwo}{\lang_{2}}
\newcommand{\X}{\mathscr{X}}
\newcommand{\ind}{\mathnoun{I}}
\newcommand{\bd}{\mathnoun{B}}
\newcommand{\RCA}{\mathnoun{RCA}}
\newcommand{\ACA}{\mathnoun{ACA}}
\newcommand{\WKL}{\mathnoun{WKL}}
\newcommand{\RT}{\mathnoun{RT}}
\newcommand{\PV}{\mathnoun{PV}}
\newcommand{\Con}{\mathnoun{Con}}
\newcommand{\Sat}{\mathnoun{Sat}}
\newcommand{\Cod}{\mathnoun{Cod}}
\newcommand{\falt}[3]{\forall{#1{<}{#2}}\, {#3}}
\newcommand{\fale}[3]{\forall{#1{\leq}{#2}}\, {#3}}
\newcommand{\exle}[3]{\exists{#1{\leq}{#2}}\, {#3}}
\newcommand{\xfale}[2]{\forall{#1{\leq}{#2}}\,}
\newcommand{\excf}[2]{\exists^\infty{#1}\, {#2}}
\newcommand{\excfin}[2]{\excf{#1{\in}#2}}
\newcommand{\aall}[2]{\forall^\infty{#1}\, {#2}}
\newcommand{\aallin}[2]{\aall{#1{\in}#2}}
\newcommand{\I}{\mathbb{I}}
\newcommand{\Ext}{\mathnoun{Ext}}
\newcommand{\Wx}[2][n+1]{\Ext_{#1}(#2)}
\newcommand{\val}[3][n+1]{#2(#3)}
\newcommand{\forcesp}{\Vdash^{+}}
\newcommand{\VF}{\mathscr{V}}
\newcommand\fin{\mathrm{fin}}
\newcommand{\changed}[1]{\textcolor{blue}{#1}}
\begin{document}
\maketitle

\begin{abstract}
We prove that any proof of a $\forall \Sigma^0_2$ sentence in the theory $\WKL_0 + \RT^2_2$
can be translated into a proof in $\RCA_0$ at the cost of a polynomial increase in size.
In fact, the proof in $\RCA_0$ can be obtained by a polynomial-time algorithm.
On the other hand, $\RT^2_2$ has non-elementary speedup over the weaker base theory $\RCA^*_0$
for proofs of $\Sigma_1$ sentences.

We also show that for $n \ge 0$, proofs of $\Pi_{n+2}$ sentences in $\bd \Sigma_{n+1}~+~\exp$
can be translated into proofs in  $\ind \Sigma_{n} + \exp$ at a polynomial cost in size. Moreover,
the $\Pi_{n+2}$-conservativity of $\bd \Sigma_{n+1} + \exp$ over $\ind \Sigma_{n} + \exp$
can be proved in $\PV$, a fragment of bounded arithmetic corresponding to polynomial-time computation.
For $n \ge 1$, this answers a question of Clote, H\'ajek, and Paris.

\noindent
{\bf Keywords} Ramsey's theorem, proof size, proof speedup, forcing interpretation, $\alpha$-large sets, proof theory, reverse mathematics

\noindent
{\bf MSC classes} Primary: 03F20, 03B30, 03F35 Secondary: 05D10, 03F30, 03F25, 03H15
\end{abstract}


%
%

\renewcommand{\changed}{}

\changed{
The logical strength of Ramsey's theorem for pairs and two colours, 
formalized as the second-order arithmetic statement $\RT^2_2$,
has been a major topic of interest in reverse mathematics and related areas of logic 
for over 25 years \cite{Seetapun1995strength, CJS01, CSY2014, CSY2017}.
Recently, Patey and the third author \cite{PY} showed that,
as far as proving relatively simple statements is concerned, 
$\RT^2_2$ is no stronger than the usual base theory considered in reverse mathematics,
$\RCA_0$, axiomatized by the comprehension principle for computable properties of numbers and by $\Sigma^0_1$ induction.
More precisely, the result shown in \cite{PY} is that
$\RT^2_2$ is \emph{$\forall \Sigma^0_2$-conservative} over $\RCA_0$: 
for any sentence $\gamma$ that consists of 
a block of universal first- and second-order quantifiers followed by a $\Sigma^0_2$ formula, 
if $\RCA_0 + \RT^2_2$ proves $\gamma$, then $\gamma$ already has a proof in $\RCA_0$.  
In particular, since any statement expressing the well-foundedness of a given computable
ordering is $\forall\Sigma^0_2$, the theories $\RCA_0 + \RT^2_2$ and $\RCA_0$ have the same
proof-theoretic strength as measured in terms of provability of the well-foundedness 
of ordinal notation systems.
}

\changed{
This new insight into the strength of $\RT^2_2$ naturally leads to some further questions.
One of these is whether the conservation result can be extended to a complete characterization
of the $\Pi^1_1$ sentences provable from $\RT^2_2$. For example, it is open whether the
$\Pi^1_1$ consequences of $\RCA_0 + \RT^2_2$ are axiomatized by the $\Sigma^0_2$ collection
principle (which itself witnesses that $\RT^2_2$ is not $\Pi^1_1$-conservative over $\RCA_0$).
Here, we consider a different problem, already raised as Question 9.5 in \cite{PY}:
does $\RT^2_2$ have significant proof speedup over $\RCA_0$ with respect to $\forall\Sigma^0_2$ sentences?
A positive answer would indicate, among other things, 
that the conservation result might have rather limited relevance in practice.  
}

The study of proof size in axiomatic theories \changed{and of proof speedup},
a topic going back to G\"odel \cite{goedel:laenge},
was given an excellent (if no longer fully up-to-date) survey by Pudl\'ak~\cite{incoll:pflen}.
An important phenomenon that has been observed empirically is that, from a quantitative perspective,
prominent cases in which an arithmetic theory 
\changed{$T$ is conservative over a theory $S$ for sentences in some class $\Gamma$
typically fit one of two patterns:
either $T$ has iterated exponential (``tower function'') speedup over $S$ on proofs of sentences from $\Gamma$,
or each proof of such a sentence in $T$ can be translated into $S$ with at most polynomial blowup.}
The former behaviour is illustrated for instance by the conservativity of arithmetical comprehension $\ACA_0$ over $\mathnoun{PA}$
and the $\Pi_2$ conservativity of $\ind\Sigma_1$ over primitive recursive arithmetic.
The latter is illustrated by the conservativity of $\RCA_0$ over $\ind\Sigma_1$ and the $\Pi^1_1$-conservativity of Weak K\"{o}nig's Lemma over $\RCA_0$.
The question we are interested in is whether the conservativity result of $\cite{PY}$ also fits one of these patterns and, if so, which one.

Our main result is that 
\changed{the conservativity theorem of \cite{PY} is, quantitatively speaking, tame:}
$\RT^2_2$ has at most polynomial speedup over $\RCA_0$ for $\forall\Sigma^0_2$ sentences.
Moreover, this is witnessed by a \emph{polynomial simulation}:
there is a polytime procedure that takes a proof of a $\forall\Sigma^0_2$ sentence
in $\WKL_0 + \RT^2_2$ (where $\WKL_0$ is $\RCA_0$ extended by Weak K\"{o}nig's Lemma)
and outputs a proof of the same sentence in $\RCA_0$.
By \cite[Proposition 4.6]{ci:instrumentalism}, it follows that proofs of \changed{purely first-order} 
$\Pi_3$ sentences in $\WKL_0 + \RT^2_2$
can be translated in polynomial time into proofs in $\ind\Sigma_1$.

To obtain our result, we make use of a general-purpose technique due to Avigad \cite{art:formforce}:
in order to show polynomial simulation of a theory 
\changed{$T$ by $S$, build what could be called a ``forcing interpretation'' of $T$ in $S$ ---
that is, formalize within $S$ a forcing construction that leads to a model of $T$ ---
and verify via small proofs in $S$}
that sentences of the appropriate class are forced if they are true in the ground model.
Interestingly, while most forcing arguments give rise to a generic extension of the ground model,
the one we work with produces a \emph{generic cut}, refining an initial segment construction used in \cite{PY}.
Both the construction of \cite{PY} and ours rely on non-trivial finite combinatorics.
In the case of \cite{PY}, this took the form of an upper bound on 
\changed{finite Ramsey's theorem with the size of finite sets expressed in terms of so-called $\alpha$-largeness.}
Here, we need a considerable strengthening of that bound with a more constructive proof, which was provided in \cite{KY:ramsey-combinatorics}.

We also consider the question whether an analogous polynomial simulation result
still holds if we weaken the base theory to $\RCA^*_0$, which differs from
$\RCA_0$ in that the $\Sigma^0_1$ induction scheme is replaced by
$\Delta^0_0$ induction plus an axiom $\exp$ guaranteeing totality of the exponential function.
It turns out that, perhaps surprisingly, changing the base theory makes a major difference:
even though $\RT^2_2$ is $\forall \Sigma^0_2$-conservative over $\RCA^*_0$
(as can be shown by the method used to derive $\Pi^0_2$-conservativity in \cite{Y-MLQ13}),
it has iterated exponential speedup over $\RCA^*_0$, already for 
\changed{sentences of very limited syntactic complexity.
We give a proof of the speedup for $\Sigma_1$ sentences; 
in fact, it could also be witnessed by finite consistency statements 
that can are expressible by bounded formulas with exponential terms.}

Since $\RCA_0 + \RT^2_2$ proves $\Sigma^0_2$ collection, our main theorem immediately
implies that the \changed{so-called} Paris--Friedman theorem, i.e.~the $\Pi_{n+2}$-conservativity of $\bd \Sigma_{n+1}$ over $\ind\Sigma_n$,
can be strengthened to a polynomial simulation in the case when $n=1$. It makes sense to ask whether this extends to other values
of $n$, especially because a related question --- whether the conservativity can be proved in bounded arithmetic ---
was asked by Clote~et~al.~\cite{art:formconserv}. We prove that the answer to both questions is positive for all $n \ge 1$,
and that it is positive for $n =0$ if both $\bd \Sigma_1$ and $\ind \Delta_0$ are extended by $\exp$.
We do this by turning one of the model-theoretic proofs of Paris--Friedman into a forcing interpretation.
As discussed in slightly greater detail in Section 4, Fedor Pakhomov [private communication] independently devised a completely different approach
that proves a more general result; in particular, his argument obviates the need for $\exp$ in the $n = 0$ case.

The remainder of this paper has \changed{the following} structure.
After introducing some basic definitions and notational conventions below,
we discuss the general concept of forcing interpretations in Section 1.
We prove our main theorem in Section 2 and the contrasting speedup result over $\RCA^*_0$ in Section~3.
The polynomial simulation of $\bd \Sigma_{n+1} + \exp$ by $\ind\Sigma_n + \exp$ is presented in Section~4.

\begin{center}
*
\end{center}

\changed{Basic information on fragments of first- and second-order arithmetic
can be found in \cite{book:hajek+pudlak}, \cite{SOSOA}, and \cite{hirschfeldt:slicing-book}.}

We will write~\defm{$\Lone$} and~\defm{$\Ltwo$} for the languages of first- and second-order arithmetic, respectively.
Note that these languages do \emph{not} have a symbol for exponentiation.
We use lowercase letters for objects of the numerical (``first-order'') sort,
and uppercase letters for objects of the set (``second-order'') sort.
Notation like $\Sigma^0_n$, $\Pi^0_n$ represents the usual formula classes defined
in terms of first-order quantifier alternations, but allowing second-order free variables.
On the other hand, notation without the superscript $0$, like $\Sigma_n$, $\Pi_n$,
represents analogously defined
classes of $\Lone$ formulas --- that is, without any second-order variables at all.
If we want to specify the second-order parameters appearing
in a $\Sigma^0_n$ formula, we use notation like $\Sigma_n(\bar X)$.
We extend these conventions to naming theories: thus, for example,
$\bd\Sigma^0_2$ is the $\Ltwo$-theory axiomatized by $\Delta^0_0$ induction and
$\Sigma^0_2$ collection, whereas $\bd\Sigma_2$ is the $\Lone$-theory
axiomatized by $\Delta_0$ induction and $\Sigma_2$ collection.

Recall that $\RCA^*_0$ is the theory defined in \cite{simpson-smith} which differs
from $\RCA_0$ in that $\Sigma^0_1$ induction is replaced by $\Delta^0_0$
induction plus an axiom\changed{, known as $\exp$,} stating the totality of exponentiation.
The first-order consequences of $\RCA^*_0$ are known to coincide with the theory $\bd\Sigma_1 + \exp$,
which in particular means that $\RCA^*_0$ is $\Pi_2$-conservative over $\ind\Delta_0 + \exp$.

The symbol $\mathbb{N}$ stands for the set of natural numbers ---
both as understood in the metatheory (the \emph{standard} natural numbers)
or as formalized in an $\Lone$ or $\Ltwo$ theory
(which, if consistent, will of course have nonstandard models).
It should be clear from the context which is meant.
A set $X \subseteq \IN$ is \emph{finite}
if it is bounded, i.e.~there is $k \in \IN$ such that $\ell \le k$ for all $\ell \in X$;
otherwise, $X$ is \emph{infinite}. Write $X\subseteq_{\fin}Y$ for $X$ is a finite subset of $Y$. Each finite set $X$ is coded in a standard way by
the binary representation of some natural number $x$,
and may be identified with $x$ in many contexts.
The symbol $\omega$ stands for the smallest infinite ordinal --- again, we use
the same symbol in the metatheory and in formal theories.


We will be interested in comparing the sizes of proofs of statements
in various theories, which requires us to fix some terminology and
conventions regarding syntax. We use the word \defm{theory} to mean a set of sentences.
Every theory~$T$ comes with a language~\defm{$\lang(T)$}
with the property that each non-logical symbol appearing in $T$~is in $\lang(T)$,
though not necessarily vice versa. To simplify things, \emph{we assume that
$\lang(T)$ always contains only finitely many non-logical symbol}s.
The \defm{size} of a term, a formula or a proof is the number of symbols in it.
All variable, constant, function and relation symbols count as one symbol.
We use vertical lines, $|\cdot|$, to denote the size of a given syntactical object.

To measure the sizes of proofs precisely, one needs to fix a proof system.
It is known that there are polynomial-time translations
between the usual Hilbert-style systems and the sequent calculus \emph{with the cut rule} \cite{eder:complexities-calculi},
and between their tree-like or sequence-like versions \cite[Theorem~4.1]{incoll:pflen}.
Therefore, in principle, it does not matter which proof system we choose here.
In our arguments, we have in mind a Hilbert-style system
like the one in Enderton~\cite[Section~2.4]{book:enderton},
except that instead of allowing all (universal closures of) propositional tautologies as axioms,
we derive them from a finite number of propositional axiom schemes.
We assume that, like in Enderton's system, the only official connectives are $\neg, \then$ and the only quantifier is $\forall$.
Recall~\cite[page~113]{book:enderton} that a term~$t$ is \defm{substitutable} for a variable symbol~$v$
in a formula~$\phi$ if no free occurrence of~$v$ in~$\phi$ is in the scope of a quantifier binding
a variable that appears in~$t$.

\begin{defn}
Let $T,T'$ be theories and let $\Gamma$ be a set of sentences in the language~$\lang(T)\cap\lang(T')$.
Then $T$ \emph{polynomially simulates}~$T'$ with respect to~$\Gamma$
if there exists a polynomial-time procedure that, given any proof of $\gamma \in \Gamma$ in $T'$ as input,
outputs a proof of $\gamma$ in $T'$. (In particular, this implies that for every proof of $\gamma \in \Gamma$
in $T'$, there is a proof of $\gamma$ in $T$ of at most polynomially larger size.)

The theory $T'$ has \emph{non-elementary speedup} over $T$ with respect to $\Gamma$ if for each elementary recursive
function $f$, there exist $\gamma\in\Gamma$ and a proof $\pi$ of~$\gamma$ in~$T'$
such that no proof of~$\gamma$ in~$T$ has size $\le f(|\pi|)$.
\end{defn}

\section{Forcing interpretations}\label{sec:forcing}

The technique we use for proving polynomial simulations was developed in Avigad~\cite{art:formforce}
and relies on a certain kind of formalized forcing argument.
In essence, the idea is to define a more general notion of interpretation:
while a traditional interpretation gives a uniform way of defining a model $M' \models T'$
inside a model $M \models T$, an Avigad-style interpretation gives a way of describing
a generic model $M[G] \models T'$ which is not fully specified until the generic filter $G$ is fixed. The point is that,
even though $G$ might not be definable, the properties of $M[G]$ that we care about in
the context at hand do not depend on $G$.

In this section, we discuss this idea, and its connection to questions of proof size, in some generality.
We begin with an annoying technical issue.

\subsection{Simplification}
When defining intepretations, formulas with more than one function symbol in an atomic subformula
can cause ambiguities. In this subsection, we explain how to get around the problem
by avoiding such formulas as much as possible.
This is quite similar in spirit to translating all formulas to a relational language.

\begin{defn}
A \defm{simple} term is one of the form $v$ or $f(\bar w)$,
 where $v,\bar w$ are variables and $f$ is a function symbol.
A \defm{simple} formula is a formula
 in which every atomic subformula is either
 \begin{itemize}
 \item a formula with no function symbol; or
 \item an equation with exactly one function symbol.
 \end{itemize}
Constant symbols are regarded as $0$-ary function symbols here.
\end{defn}

\begin{defn}\label{defn:simple-tr}
For a formula $\theta$, its simple translation~$\theta^*$ is defined in the following way.

First, to each term $t$ and each variable $x$ not appearing in $t$,
 we associate a simple formula $t[x]$,
   intended to define $x$ to be the term $t$,
  by induction on the subterms of $t$.
For a variable $y$, the formula $y[x]$ is $x = y$. For
$t$ of the form $f(t_1,\ldots,t_n)$, the formula $t[x]$ is
 \begin{equation*}
 \forall x_1 \ldots \forall x_n\,(t_1[x_1] \then (t_2[x_2] \then \ldots \then (t_n[x_n] \then x = f(x_1,\ldots,x_n))\ldots )),
 \end{equation*}
 where $x_1,\dots,x_n$ are canonically chosen fresh variables.

The formula $\theta^*$ equals $\theta$ for $\theta$ simple atomic. If $\theta$ is
$R(t_1,\ldots,t_n)$ and is not simple, then $\theta^*$ is \[\forall x_1\ldots \forall x_n\,(t_1[x_1] \then (t_2[x_2] \then \ldots \then (t_n[x_n] \then R(x_1,\ldots,x_n))\ldots ))\] where $x_1,\ldots,x_n$ are canonically chosen fresh variables. The translation
commutes with $\neg$, $\then$, and $\forall$.
\end{defn}

The following two lemmas mean that when studying questions of proof size,
we can largely restrict attention to simple formulas.
The lemmas can essentially be read off Visser's exposition in~\cite[Section~7.3]{art:insideEXP}.

\begin{lem}\label{lem:fma-rel}
There exists a polynomial-time procedure that, given a formula $\theta$, outputs
a first-order logic proof of~$\theta\nsc\theta^*$.
\end{lem}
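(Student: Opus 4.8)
The goal is to exhibit a polynomial-time procedure that, given a formula $\theta$, produces a (first-order logic) proof of $\theta \nsc \theta^*$. I would proceed by structural induction on $\theta$, following the recursive definition of $\theta^*$ in Definition~\ref{defn:simple-tr}, and making sure that at each step the proof we append has length bounded by a polynomial (in fact roughly linear) in the size of the subformula being processed — so that the total proof is polynomial in $|\theta|$. The core of the argument is an auxiliary claim about the defining formulas $t[x]$: namely, that there is a polynomial-time procedure producing a proof of
\[
\forall x\,(t[x] \nsc x = t)
\]
for every term $t$ and fresh variable $x$. This is itself proved by induction on the subterm structure of $t$: for a variable $y$, $y[x]$ is literally $x = y$, so the equivalence is an instance of a propositional/equality axiom; for $t = f(t_1,\dots,t_n)$, one uses the inductively obtained proofs of $\forall x_i\,(t_i[x_i] \nsc x_i = t_i)$ together with the equality axioms for $f$ (substitutivity) to transform $\forall x_1 \dots \forall x_n\,(t_1[x_1] \then \dots \then x = f(x_1,\dots,x_n))$ into $x = f(t_1,\dots,t_n)$. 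The freshness conditions on $x_1,\dots,x_n$ guarantee that all the substitutions involved are legitimate (the $x_i$ are substitutable where needed), which is exactly why the ``canonically chosen fresh variables'' clause is in the definition.

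With that lemma on $t[x]$ in hand, the main induction is routine. For simple atomic $\theta$, we have $\theta^* = \theta$ and the equivalence is trivial (an axiom). For a non-simple atomic $\theta = R(t_1,\dots,t_n)$, $\theta^*$ is $\forall x_1 \dots \forall x_n\,(t_1[x_1] \then \dots \then (t_n[x_n] \then R(x_1,\dots,x_n))\dots)$; using the proofs of $\forall x_i\,(t_i[x_i] \nsc x_i = t_i)$ and equality substitutivity for $R$, one derives $\theta^* \nsc R(t_1,\dots,t_n)$. (There is a parallel case where $\theta$ is an equation with more than one function symbol, handled the same way via the $t[x]$ machinery.) For the inductive cases, since $(\cdot)^*$ commutes with $\neg$, $\then$, $\forall$, we just glue the sub-proofs together with propositional reasoning and the rule that from $\forall v\,(\phi \nsc \psi)$ one gets $\forall v\,\phi \nsc \forall v\,\psi$ — each such gluing step adds only a constant times the size of the immediate subformulas, so lengths add up polynomially. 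Throughout, one should fix once and for all a finite schematic list of the propositional-tautology and equality axioms being invoked and a finite list of derivation templates, so that the procedure is genuinely polynomial-time (it merely instantiates templates); this is where appeal to Visser's exposition in \cite[Section~7.3]{art:insideEXP} does most of the bookkeeping work for us.

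The main obstacle, such as it is, is not mathematical depth but the careful management of substitutability and variable capture: one must verify that the canonical choice of fresh variables in the definition of $t[x]$ and of $\theta^*$ makes every term substitutable for every variable at every point where a substitution axiom or a generalization is applied, and that renaming is never needed (or, if it is, that it can be done locally and cheaply). A secondary point requiring attention is the uniformity/polynomial-time claim: one has to check that the recursion unfolds to a proof whose size is polynomial — here the key observation is that $|t[x]|$ and $|\theta^*|$ are themselves polynomially bounded in $|t|$, $|\theta|$, and the proof attached at each node of the recursion is linear in the size of that node's formula, so summing over the (linearly many) nodes gives a polynomial total. Both of these are the sort of thing that, as the text says, can be ``essentially read off'' Visser's treatment, so the proof will mostly consist of setting up the induction and citing \cite{art:insideEXP} for the routine verifications.
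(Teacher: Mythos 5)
Your proposal is correct and follows essentially the same route as the paper: the paper cites Theorem~7.3.6 of Visser~\cite{art:insideEXP} (noting only the cosmetic switch from $\exists,\land$ to $\forall,\then$ and the upgrade from ``polynomial size'' to ``polynomial time''), and your structural induction on $\theta$ together with the auxiliary claim about $\forall x\,(t[x]\nsc x=t)$ is precisely the argument that citation unfolds to, with the same attention to freshness, substitutability, and size bookkeeping.
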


\begin{proof}
This is essentially Theorem~7.3.6 in Visser~\cite{art:insideEXP}, with the following differences:
(i) Visser defines the translation $\theta \mapsto \theta^*$ (where $\theta^*$ would be $\theta^{*\circ}$ in his notation)
using $\exists$ and $\land$, while we use $\forall$ and $\then$ to match the official connectives of our proof system,
and (ii) Visser only needs a formal proof of~$\theta\nsc\theta^*$ of size polynomial in $|\theta|$,
so he does not discuss the time required to construct it. However, these differences have no bearing on the proof,
and the reader can verify that all of Visser's constructions can be carried out in polynomial time.
\end{proof}

\begin{lem}\label{lem:pf-rel}
There is a polynomial-time procedure that, given a proof $\pi$ in first-order logic,
outputs a proof $\pi^*$ with the following properties:
 \begin{itemize}
 \item $\pi^*$ is a proof in the same language as~$\pi$;
 \item every formula in~$\pi^*$ is simple;
 \item if $\theta_1,\theta_2,\dots,\theta_\ell$
    are the non-logical axioms in~$\pi$,
   then $\theta_1^*,\theta_2^*,\dots,\theta_\ell^*$
    are the non-logical axioms in~$\pi^*$;
 \item if $\eta$~is the conclusion of~$\pi$,
   then $\eta^*$~is the conclusion of~$\pi^*$.
 \end{itemize}
\end{lem}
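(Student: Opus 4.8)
The plan is to reduce \cref{lem:pf-rel} to \cref{lem:fma-rel} by replacing the proof $\pi$ with its ``simple translation'' formula by formula, and then stitching the translated formulas together into a legitimate proof using the formal equivalences $\theta \nsc \theta^*$ supplied by \cref{lem:fma-rel}. Concretely, given $\pi = \langle \theta_0, \theta_1, \dots, \theta_m\rangle$ (with $\eta = \theta_m$), I would first compute $\theta_i^*$ for each $i$ in polynomial time (the translation $\theta \mapsto \theta^*$ is clearly polytime, since it just recursively threads fresh variables through the term structure). These $\theta_i^*$ will be the ``backbone'' of $\pi^*$. The work is to justify each $\theta_i^*$ by small pieces of first-order reasoning that are themselves simple.

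For each step $i$ of the original proof, I would proceed by cases on why $\theta_i$ was legitimate in $\pi$. First I note that $\theta_i^* \nsc \theta_i$ has a polytime-constructible first-order proof by \cref{lem:fma-rel}; the mild extra point is that this proof can itself be taken to consist of simple formulas — this should follow either from inspecting Visser's construction or, more cheaply, by noting that once we have \emph{any} proof of $\theta_i \nsc \theta_i^*$, applying the translation $(\cdot)^*$ to that proof and using the fact that $(\theta_i \nsc \theta_i^*)^*$ is logically equivalent to $\theta_i^* \nsc \theta_i^{**}$ and that $\theta_i^{**} = \theta_i^*$ (the translation is idempotent on already-simple formulas, and $\theta_i^*$ is simple) lets us bootstrap into the simple fragment. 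So without loss of generality I have simple proofs of $\theta_i \nsc \theta_i^*$ available. Now: (a) if $\theta_i$ is a logical axiom, then $\theta_i$ has a short proof, hence so does $\theta_i^*$ via the equivalence — but I must double-check each axiom scheme of Enderton's system, the delicate one being the substitution/quantifier axioms $\fa{v}{\phi} \then \phi^v_t$, where after translation one has to simulate substitution of a term by the defining-formula trick $t[x]$; this is exactly the kind of manipulation Visser handles and it stays within simple formulas. (b) If $\theta_i$ is a non-logical axiom $\theta_j$ from the list, we are allowed to use $\theta_j^*$ as a non-logical axiom in $\pi^*$, so $\theta_i^* = \theta_j^*$ is immediate. (c) If $\theta_i$ was obtained by modus ponens from earlier $\theta_a$ and $\theta_a \then \theta_i$, then in $\pi^*$ we already have (inductively) $\theta_a^*$ and $(\theta_a \then \theta_i)^* = \theta_a^* \then \theta_i^*$, so modus ponens gives $\theta_i^*$. (d) If $\theta_i = \fa{v}{\theta_a}$ was obtained by generalization, then $\theta_i^* = \fa{v}{\theta_a^*}$ follows from $\theta_a^*$ by the same rule (with the usual side condition on $v$ being preserved, since the translation does not introduce new free occurrences of $v$). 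In every case the inserted sub-derivation is a fixed polytime function of $\theta_i$ and its justification, so the whole of $\pi^*$ is produced in polynomial time, every formula in it is simple, the non-logical axioms are exactly the $\theta_j^*$, and the conclusion is $\eta^* = \theta_m^*$, as required.

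The main obstacle I anticipate is bookkeeping around the quantifier and substitution axioms: one has to be careful that translating $\phi^v_t$ — substitution of a compound term $t$ — yields something provably equivalent, by a simple proof, to the substitution performed via the relational defining-formula $t[x]$, and that fresh variables chosen ``canonically'' in Definitions~\ref{defn:simple-tr} never collide with the variables already in play (in particular that $t$ remains substitutable after translation). This is genuinely the content of Visser's \cite[Section~7.3]{art:insideEXP}, and rather than redo it I would cite it, remarking only that (i) our use of $\forall,\then$ in place of Visser's $\exists,\land$ changes nothing essential, and (ii) all his constructions are visibly polynomial time. A secondary, purely cosmetic point is verifying that Enderton's propositional axiom \emph{schemes} (which we use in place of arbitrary tautologies) translate to simple instances — but propositional axioms contain no terms other than the schematic formula variables, so their translations are just the schemes applied to translated formulas, and simplicity is inherited. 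With these points noted, the lemma follows.
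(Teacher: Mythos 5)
Your overall structure — translate each line $\theta_i$ to $\theta_i^*$, observe that modus ponens and generalization commute with the translation, declare $\theta_j^*$ as non-logical axioms, and do real work only for the logical axioms — is exactly Visser's argument, and since the paper proves this lemma by citing Visser (Theorems 7.3.3 and 7.3.4), your main line is essentially the same approach with more detail shown.

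However, two points you make along the way are flawed and should be dropped. First, the framing ``reduce \ref{lem:pf-rel} to \ref{lem:fma-rel}'' does not work as stated: the proofs of $\theta_i \nsc \theta_i^*$ supplied by \ref{lem:fma-rel} are not themselves guaranteed to consist of simple formulas (they manifestly mention $\theta_i$, which is not simple), so splicing them into $\pi^*$ would violate the requirement that every formula in $\pi^*$ be simple. Your case (a) as written (``$\theta_i$ has a short proof, hence so does $\theta_i^*$ via the equivalence'') suffers from this: deriving $\theta_i^*$ by modus ponens from $\theta_i$ and $\theta_i \then \theta_i^*$ puts the non-simple formula $\theta_i$ in $\pi^*$. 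Second, the proposed fix — ``apply the translation $(\cdot)^*$ to that proof and use idempotence'' — is circular. The operation $(\cdot)^*$ is defined on formulas, not on proofs; applying it line-by-line to a proof and claiming the result is still a proof is precisely the content of \ref{lem:pf-rel}, which is what you are trying to establish. So this shortcut cannot stand on its own.

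What actually has to be done, and what Visser does, is to construct from scratch a short simple proof of $\alpha^*$ for each \emph{logical} axiom $\alpha$ of the Hilbert system. Propositional schemes are easy, as you note. The substitution axiom $\fa v{\phi}\then\phi[v/t]$ is the hard case, because $(\phi[v/t])^*$ is not syntactically $\phi^*[v/t]$ when $t$ is compound; one must derive it from $\fa v{\phi^*}$ using the defining-formula machinery $t[x]$, entirely within the simple fragment. Your first option, ``inspect Visser's construction,'' is the right move here and is what the paper's proof amounts to. In short: keep the induction-on-proof-structure skeleton and the citation to Visser for the logical-axiom case; discard the ``reduce to \ref{lem:fma-rel}'' framing and the circular bootstrap.
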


\begin{proof}
See Theorem~7.3.3 and Theorem~7.3.4 in Visser~\cite{art:insideEXP},
with the same caveats as in Lemma~\ref{lem:fma-rel}.
\end{proof}


\subsection{Forcing translations and interpretations}

A traditionally understood interpretation of a theory $T'$ in a theory $T$
is essentially (i) a translation of $\lang(T')$ into $\lang(T)$
such that (ii) $T$ proves the translations of axioms of $T'$. Forcing interpretations of the sort we need here
have a similar two-layered structure of ``translation'' and ``interpretation proper'',
except that now instead of translating an $\lang(T')$~formula $\varphi$ into $\lang(T)$,
we have to translate ``$\varphi$ is forced'' into $\lang(T)$. Then we have to verify in $T$
that the axioms of $T'$ are forced.

\begin{defn}
A \defm{forcing translation~$\tau$}
  from a language~$\lang'$ to a language~$\lang$
 consists of $\lang$~formulas
  \begin{equation*}
   s\in\Cond[\tau],\quad
   s'\extendseq_\tau s,\quad
   s\forces_\tau v\defd,\quad
   s\forces_\tau \alpha(v_1,v_2,\dots,v_\ell)
  \end{equation*}
  for every simple atomic $\lang'$~formula~$\alpha(v_1,v_2,\dots,v_\ell)$
   such that
   \begin{enumerate}[(FT1)]
   \item $s'\extendseq_\tau s$ contains
    $s'\in\Cond[\tau]\wedge s\in\Cond[\tau]$ as a conjunct;
   \item $s\forces_\tau v\defd$ contains $s\in\Cond[\tau]$ as a conjunct;
   \item $s\forces_\tau \alpha(v_1,v_2,\dots,v_\ell)$ contains\label{item:ftr/f>defd}
    $\bigwwedge_{i=1}^\ell s\forces_\tau v_i\defd$ as a conjunct,
     whenever $\alpha(v_1,v_2,\dots,v_\ell)$ is a simple atomic $\lang'$~formula; and
   \item
   if $\alpha(\bar u,v)$ is a simple atomic $\lang'$~formula and $w$~is a variable,
     then\label{item:ftr/varsubst}
     \begin{equation*}
      \bigl(s\forces_\tau\alpha(\bar u,v)\bigr)[v/w]
      \qand s\forces_\tau\bigl(\alpha(\bar u,v)[v/w]\bigr)
     \end{equation*} are the same.
   \end{enumerate}
All formulas above have exactly the free variables shown.
When there is no risk of ambiguity,
 we will often omit the subscript~$\tau$ in the notation.
We read `$s\forces\dots$' as `$s$~forces \ldots'.
By convention, the variable symbols $s,s',s'',\dots$
 are always distinct from $u,v,w,z,\dots$.
In accordance with the usual customs related to forcing,
we refer to elements~$s$ satisfying $s\in\Cond$ as forcing \defm{conditions},
and think of the objects denoted by the variables $u,v,w,z,\ldots$
as \emph{names} (for the elements of the generic model being described).

\begin{remark}
In some contexts, it might make sense to allow forcing conditions
to be tuples of elements instead of single elements.
However, in this paper we deal exclusively with theories that have a definable pairing function. So to avoid complicating the notation we eschew that kind of generality
and continue to write $s$ rather than $\bar s$.
\end{remark}

\end{defn}

The clauses we use to extend a forcing relation to arbitrary simple formulas
follow those of a ``good strong forcing notion''
in the sense of Avigad~\cite[Definition~4.2]{art:formforce}.
We deal with non-simple formulas using the $(\cdot)^*$ translation.
\begin{defn}
Let $\tau$ be a forcing translation
  from a language~$\lang'$ to a language~$\lang$.
We define an $\lang$~formula
 \defm{$s\forces_\tau \theta(v_1,v_2,\dots,v_\ell)$}
  for each $\lang'$~formula $\theta(v_1,v_2,\dots,v_\ell)$
   by recursion on~$\theta$ as follows.
\begin{enumerate}[(FT1)]
\addtocounter{enumi}{4}
\item If $\theta(v_1,v_2,\dots,v_\ell)$ is a simple $\lang'$~formula,
 then\label{item:ftr/neg}
  \begin{equation*}
   s\forces_\tau \neg\theta(v_1,v_2,\dots,v_\ell)
  \end{equation*}
  is defined to be
  \begin{equation*}
   \bigwwedge_{i=1}^\ell\dquot{s\forces_\tau v_i\defd}
   \wedge \faexteq[\tau]{s'}s{s'\nforces_\tau\theta(\bar v)}.
  \end{equation*}

\item If $\theta(u_1,u_2,\dots,u_k,w_1,w_2,\dots,w_m)$ and
         $\eta(v_1,v_2,\dots,v_\ell,w_1,w_2,\dots,w_m)$
  are simple $\lang'$~formulas,
 then\label{item:ftr/then}
  \begin{equation*}
   s\forces_\tau \theta(u_1,u_2,\dots,u_k,w_1,w_2,\dots,w_m)
                 \then\eta(v_1,v_2,\dots,v_\ell,w_1,w_2,\dots,w_m)
  \end{equation*}
  is defined to be
  \begin{align*}
   &\bigwwedge_{i=1}^k\dquot{s\forces_\tau u_i\defd}
    \wedge \bigwwedge_{i=1}^\ell\dquot{s\forces_\tau v_i\defd}
    \wedge \bigwwedge_{i=1}^m\dquot{s\forces_\tau w_i\defd}\\
   &\wedge\faexteq[\tau]{s'}s{\exexteq[\tau]{s''}{s'}{\bigl(
     \dquot{s'\forces_\tau \theta(\bar u,\bar w)}
     \then\dquot{s''\forces_\tau \eta(\bar v,\bar w)}
    \bigr)}}.
  \end{align*}

\item If $\theta(v_1,v_2,\dots,v_\ell,w)$ is a simple $\lang'$~formula,
 then\label{item:ftr/forall}
  \begin{equation*}
   s\forces_\tau \fa w{\theta(v_1,v_2,\dots,v_\ell,w)}
  \end{equation*}
  is defined to be
  \begin{equation*}
   \bigwwedge_{i=1}^\ell\dquot{s\forces_\tau v_i\defd}
   \wedge\fa w{
    \faexteq[\tau]{s'}s{\exexteq[\tau]{s''}{s'}{\bigl(
     \dquot{s'\forces_\tau w\defd}
     \then\dquot{s''\forces_\tau\theta(\bar v,w)}
    \bigr)}}
   }.
  \end{equation*}

\item\label{item:ftr/nonsimple} If $\theta(\bar v)$ is an $\lang'$~formula that is not simple,\label{item:ftr/nsimple}
 then $s\forces_\tau\theta(\bar v)$ is defined to be $s\forces_\tau\theta^*(\bar v)$,
  where $\theta^*(\bar v)$ is as defined in Definition~\ref{defn:simple-tr}.

\item $\wedge,\vee,\nsc,\exists$ are defined in terms of $\neg,\then,\forall$
 in the usual way.\label{item:ftr/defd}
\end{enumerate}
Here the quotation marks $\dquot\dots$ are simply another type of brackets
 to enhance the readability of formulas.
Our convention is that $\forces$~has lower precedence
 than all the logical connectives in first-order logic.
All formulas above have exactly the free variables shown.
We often abbreviate $\bigwwedge_i(s\forces_\tau v_i\defd)$
 as \defm{$s\forces_\tau\bar v\defd$}.
\end{defn}

The definitions above are designed to
 make \ref{item:ftr/f>defd} and~\ref{item:ftr/varsubst}
  hold for all formulas, at least essentially.

\begin{lem}\label{lem:f>defd}
Let $\tau$ be a forcing translation
from a language~$\lang'$ to a language~$\lang$.
Then:
\begin{enumerate}[(i)]
\item $s\forces_\tau\theta(\bar v)$ contains $s\forces\bar v\defd$ as a conjunct
 for every $\lang'$~formula $\theta(\bar v)$;
\item if $\theta(\bar u,v)$ is a $\lang'$~formula and $w$~is a variable,
 then
 \begin{equation*}
  \bigl(s\forces_\tau\theta(\bar u,v)\bigr)[v/w]
  \qand s\forces_\tau\bigl(\theta(\bar u,v)[v/w]\bigr)
 \end{equation*}
  differ only by a one-to-one renaming of bound variables, and
   provided $\theta$ is simple,
    the variable~$w$ is substitutable for~$v$ in~$\theta(\bar u,v)$
     exactly when it is so in $s\forces_\tau\theta(\bar u,v)$.
\end{enumerate}
\end{lem}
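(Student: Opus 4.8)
The plan is to prove both clauses by induction on the structure of the $\lang'$~formula $\theta$, following the recursive definition of $s\forces_\tau\theta$ clause by clause. For clause~(i), I would first observe that in the base case --- $\theta$ simple atomic --- the statement is exactly property~\ref{item:ftr/f>defd} of a forcing translation, and that for simple non-atomic $\theta$ the conjunct $s\forces\bar v\defd$ is visibly the first conjunct written in each of the defining formulas in \ref{item:ftr/neg}, \ref{item:ftr/then}, and \ref{item:ftr/forall}. There is a subtlety about which variables $\bar v$ are ``the'' free variables: in the $\then$ and $\forall$ clauses the free variables of $\theta$ are split across the antecedent/consequent (respectively the body), so I need to check that every free variable of the compound formula does receive a corresponding $\defd$ conjunct --- this is why the definitions list $\bigwwedge_i s\forces u_i\defd$, $\bigwwedge_i s\forces v_i\defd$ and $\bigwwedge_i s\forces w_i\defd$ separately. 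For non-simple $\theta$, clause~\ref{item:ftr/nonsimple} reduces $s\forces_\tau\theta(\bar v)$ to $s\forces_\tau\theta^*(\bar v)$, so I need that $\theta^*$ has the same free variables as $\theta$ (immediate from Definition~\ref{defn:simple-tr}) and then apply the already-established simple case to $\theta^*$. One should also double-check the abbreviation $s\forces_\tau\bar v\defd$ behaves correctly when $\ell=0$, where it is the empty conjunction.

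For clause~(ii), the idea is again induction on $\theta$, but now tracking bound variables carefully. In the base case $\theta$ simple atomic, the first half of the claim --- that $(s\forces_\tau\theta(\bar u,v))[v/w]$ and $s\forces_\tau(\theta(\bar u,v)[v/w])$ agree up to renaming of bound variables --- is in fact property~\ref{item:ftr/varsubst}, which asserts they are literally the same (a renaming by zero moves). For the inductive step, I would go through \ref{item:ftr/neg}, \ref{item:ftr/then}, \ref{item:ftr/forall}: in each, substituting $w$ for $v$ commutes with forming the defining formula, modulo the freshly-introduced bound variables $s', s''$ and (in the $\forall$ case) the bound variable of the quantifier; the only place a genuine renaming can be forced is when $w$ coincides with one of these canonically chosen fresh variables, and then a single one-to-one renaming repairs it. Since the paper's convention fixes $s,s',s'',\dots$ to be disjoint from $u,v,w,z,\dots$, the condition variables never actually clash with $w$, so the renaming is needed only for the bound variable introduced by $\fa w{\cdots}$ in clause~\ref{item:ftr/forall} (and for the canonically chosen fresh $x_i$ inside $\theta^*$ in the non-simple case). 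For the non-simple case, I reduce via \ref{item:ftr/nonsimple} to $\theta^*$ and use that $(\cdot)^*$ itself commutes with variable substitution up to renaming of the canonically chosen auxiliary variables --- this is part of the standard properties of the simple translation recorded in Definition~\ref{defn:simple-tr} and underlying Lemma~\ref{lem:fma-rel}.

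The second sentence of clause~(ii) --- the substitutability claim, restricted to simple $\theta$ --- I would handle by tracking, alongside the induction, exactly which quantifiers can capture $v$. In $s\forces_\tau\theta(\bar u,v)$ the new quantifiers introduced over the condition variables $s',s''$ bind only those variables, which by convention differ from $w$, so they neither block nor create substitutability; and any quantifier of $\theta$ that would capture $w$ when substituted into $\theta(\bar u,v)$ corresponds to a quantifier of $\theta$ that appears, unchanged, in $s\forces_\tau\theta(\bar u,v)$ (plus, in the $\forall$ case, the outer $\fa w{\cdots}$, but that uses the bound variable $w$ which is exactly the variable being renamed and so is governed by property~\ref{item:ftr/varsubst} at the atomic level). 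So the occurrences of $v$ in $s\forces_\tau\theta(\bar u,v)$ lie under the same set of ``dangerous'' binders as in $\theta(\bar u,v)$, giving the equivalence.

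I expect the main obstacle to be purely bookkeeping: making the ``up to one-to-one renaming of bound variables'' precise enough to see that the renamings compose correctly through the recursion without blowing up, and carefully stating the invariant about condition variables and canonically chosen fresh variables so that the only renaming ever needed is a single global one. No deep idea is involved --- the definitions in \ref{item:ftr/f>defd}, \ref{item:ftr/varsubst} and clauses \ref{item:ftr/neg}--\ref{item:ftr/nonsimple} were, as the text says, designed precisely to make this lemma go through --- but the proof has to be written with enough care that a reader can check the substitution and capture conditions line by line.
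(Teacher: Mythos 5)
Your proposal is correct and takes essentially the same route as the paper's own (very terse) proof: part~(i) holds by construction of the defining clauses, part~(ii) is proved by a straightforward induction on $\theta$ for simple formulas, relying on the convention that the condition variables $s,s',s'',\dots$ are disjoint from the name variables to get the substitutability claim, and the non-simple case is reduced to the simple one via clause~\ref{item:ftr/nonsimple}. You flesh out the same bookkeeping the paper leaves implicit (and to Avigad's Lemma~4.3) but add no divergent ideas.
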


\begin{proof}
Part (i) is true by construction. Part (ii), which is a special case of
Lemma~4.3 in Avigad~\cite{art:formforce},
can be proved by a straightforward induction on~$\theta$ using the definitions
 when $\theta$ is simple.
Our convention that the variable symbols $s,s',s'',\dots$
 are always distinct from $u,v,w,z,\dots$ is used to
  establish the substitutability part.
The rest follows from~\ref{item:ftr/nonsimple}.
\end{proof}

\begin{defn}
A \defm{forcing interpretation} of a theory~$T'$ in a theory~$T$
 is a forcing translation~$\tau$ from $\lang(T')$ to~$\lang(T)$
  such that $T$~proves
 \begin{enumerate}[(F{I}1)]
 \addtocounter{enumi}{-1}
 \item $\ex s{(s\in\Cond[\tau])}$;\label{item:fint/ex_Cond}

 \item $\fain s{\Cond[\tau]}{s\extendseq_\tau s}$;\label{item:fint/refl}

 \item\label{item:fint/trans}
 \begin{math}
  \fain{s,s',s''}{\Cond[\tau]}{\bigl(
   s''\extendseq_\tau s'\wedge s'\extendseq_\tau s\then s''\extendseq_\tau s
  \bigr)}
 \end{math};

 \item\label{item:fint/ex_Name}
 \begin{math}
  \fain s{\Cond[\tau]}{\exexteq[\tau]{s'}s{\ex v{
   s'\forces_\tau v\defd
  }}};
 \end{math}

 \item\label{item:fint/monodefd}
 \begin{math}
  \fain{s,s'}{\Cond[\tau]}{\fa v{\bigl(
   s'\extendseq_\tau s\wedge\dquot{s\forces_\tau v\defd}
   \then\dquot{s'\forces_\tau v\defd}
  \bigr)}}
 \end{math};

 \item for all simple atomic $\lang(T')$~formulas $\alpha(\bar v)$,\label{item:fint/monotone}
 \begin{equation*}
  \fain{s,s'}{\Cond[\tau]}{\fa{\bar v}{\bigl(
   s'\extendseq_\tau s\wedge \dquot{s\forces_\tau\alpha(\bar v)}
   \then\dquot{s'\forces_\tau \alpha(\bar v)}
  \bigr)}};
 \end{equation*}

 \item\label{item:fint/=refl}
 \begin{math}
  \fain s{\Cond[\tau]}{\fa v{(
   \dquot{s\forces_\tau v\defd}\then\dquot{s\forces_\tau v=v}
  )}}
 \end{math};

 \item\label{item:fint/=sym}
 \begin{math}
  \fain s{\Cond[\tau]}{\fa{u,v}{(
   \dquot{s\forces_\tau u=v}\then\dquot{s\forces_\tau v=u}
  )}}
 \end{math};

 \item\label{item:fint/=trans}
 \begin{math}
  \fain s{\Cond[\tau]}{\fa{u,v,w}{
   (
    \dquot{s\forces_\tau u=v}\wedge\dquot{s\forces_\tau v=w}
    \then\dquot{s\forces_\tau u=w}
   )
  }}
 \end{math};

 \item\label{item:fint/functions} for all $k\in\IN$ and\label{item:fint/fn}
           all $k$-ary function symbols~$f$ in~$\lang(T')$,
 \begin{multline*}
  \xfain s{\Cond[\tau]} \xfa{v_1,v_2,\dots,v_k}
  \Bigl(
   \bigwwedge_{i=1}^k \dquot{s\forces_\tau v_i\defd}\\
   \then\begin{aligned}[t]
    &\faexteq{s'}s{\exexteq{s''}{s'}{\ex w{\dquot{s''\forces_\tau w=f(\bar v)}}}}\\
    &\wedge s\forces_\tau \fa{w,w'}{\bigl(
             w=f(\bar v)
             \then(w'=f(\bar v)\then w=w')
            \bigr)}
  \Bigr);
   \end{aligned}
 \end{multline*}

\item for all simple $\lang(T')$~terms~$t(v_1,v_2,\dots,v_k)$
           with exactly the free variables shown and
          all simple atomic $\lang(T')$~formulas $\alpha(\bar u,w_0)$
 such that $\alpha(\bar u,t(\bar v))$ is simple,\label{item:fint/stermsubst}
  \begin{multline*}
   \xfain s{\Cond[\tau]} \xfa{\bar u,v_1,v_2,\dots,v_k,w} \\
   \bigl(
    \dquot{s\forces_\tau w=t(\bar v)}
    \then(
     \dquot{s\forces_\tau\alpha(\bar u,w)}
     \nsc\dquot{s\forces_\tau\alpha(\bar u,t(\bar v))}
    )
   \bigr);
  \end{multline*}

 \item\label{item:fint/densedefd}
 \begin{math}
  \fain s{\Cond[\tau]}{\fa v{(
   \faexteq[\tau]{s'}s{\exexteq[\tau]{s''}{s'}{\dquot{s''\forces_\tau v\defd}}}
   \then s\forces_\tau v\defd
  )}}
 \end{math};

 \item for all simple atomic $\lang(T')$~formulas $\alpha(\bar v)$,\label{item:fint/density}
 \begin{equation*}
  \fain s{\Cond[\tau]}{\fa{\bar v}{\bigl(
   \faexteq[\tau]{s'}s{\exexteq[\tau]{s''}{s'}{\dquot{s''\forces_\tau \alpha(\bar v)}}}
   \then s\forces_\tau\alpha(\bar v)
  \bigr)}};
 \end{equation*}

 \item\label{item:fint/theory} $\fain s{\Cond[\tau]}{s\forces_\tau \sigma}$
   \quad for all $\sigma\in T'$.\label{item:fint/thy}
 \end{enumerate}
We refer to a forcing translation satisfying \ref{item:fint/ex_Cond}--\ref{item:fint/density} (that is, to a forcing interpretation
of pure logic) as a \emph{forcing interpretation of $\lang(T')$ in $T$}.
\end{defn}

Clauses~\ref{item:fint/ex_Cond} and~\ref{item:fint/ex_Name} are technical.
\mbox{\ref{item:fint/refl}--\ref{item:fint/trans}} state that $\extendseq$
is a preorder. Meanwhile, we require neither antisymmetry nor the existence of a maximal element.
\mbox{\ref{item:fint/monodefd}--\ref{item:fint/monotone}} form the base case of the usual requirement that forcing be closed downwards under $\extendseq$.
\mbox{\ref{item:fint/=refl}--\ref{item:fint/stermsubst}} say roughly that the equality axioms are forced.
The variable symbol~$w$ in~\ref{item:fint/stermsubst}
may be syntactically equal to one of~$\bar u$,
 but by convention $w_0$~cannot be.
\mbox{\ref{item:fint/densedefd}--\ref{item:fint/density}} express the base case of the
connection between forcing $\neg\neg \varphi$ and forcing $\varphi$.
Finally, the crucial condition \ref{item:fint/theory} is what
makes a forcing interpretation of pure logic
be an interpretation of a theory $T'$: all axioms of $T'$ are forced.

Forcing interpretations are clearly
 closed under composition and definition by cases.
As shown by the following example,
 our notion of forcing interpretations generalizes
  the usual notion of interpretations.

\begin{eg}\label{eg:interpret}
Every interpretation (or, more precisely, every parameter-free one-dimensional global relative interpretation)~$\tau$ 
of a theory~$T'$ in a theory~$T$
gives rise to a forcing interpretation of~$T'$ in~$T$ as follows.
\begin{itemize}
\item Define $s\in\Cond[\tau]$ and $s'\extendseq_\tau s$ to be respectively
 \begin{equation*}
  s=s\qand s'=s'\wedge s=s.
 \end{equation*}
\item Define $s\forces_\tau v\defd$
 to be $s\in\Cond[\tau]\wedge\delta(v)$,
  where $\delta(v)$ is the defining formula for the domain of~$\tau$.
\item Define $s\forces_\tau\alpha(\bar v)$,
 where $\alpha(\bar v)$ is a simple atomic $\lang(T')$~formula,
  according to the interpretation~$\tau$.
\end{itemize}
\end{eg}

\subsection{Forcing interpretations and polynomial simulation}

We now turn to the question of what is required of a forcing interpretation
if it is to imply a polynomial simulation between theories.

We begin by verifying that proofs of some basic facts, including generalizations
of \ref{item:fint/monotone}, \ref{item:fint/stermsubst}, and~\ref{item:fint/density}
to all formulas, can be found in polynomial time for any forcing interpretation.
Our convention that the language of a theory is always finite is implicitly used in the proofs of the lemmas below.

\begin{lem}\label{lem:LEM}
Let $\tau$ be a forcing interpretation of $\lang(T')$ in a theory~$T$.
Then there is a polynomial-time procedure which,
given an $\lang(T')$~formula $\theta(\bar v)$,
outputs a proof in $T$ of
   \begin{equation*}
    \fain s{\Cond[\tau]}{\fa{\bar v}{\neg\bigl(
     \dquot{s\forces_\tau\theta(\bar v)}
     \wedge\dquot{s\forces_\tau\neg\theta(\bar v)}
    \bigr).}}
   \end{equation*}
\end{lem}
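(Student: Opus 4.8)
The plan is to induct on the structure of $\theta$, following the recursive definition of $s \forces_\tau \theta$, and to note at each step that the inductive construction of the proof takes only polynomially many symbols, hence polynomial time. The base case is $\theta$ simple atomic: then $s \forces_\tau \theta(\bar v)$ and $s \forces_\tau \neg\theta(\bar v)$ are both defined, and by \ref{item:ftr/neg} the latter contains the conjunct $\faexteq[\tau]{s'}s{s' \nforces_\tau \theta(\bar v)}$; instantiating this at $s' = s$ (legitimate by \ref{item:fint/refl}) gives $s \nforces_\tau \theta(\bar v)$, contradicting the first conjunct $s \forces_\tau \theta(\bar v)$. This is a fixed-size propositional manipulation together with one use of \ref{item:fint/refl}, so the proof fragment has size bounded independently of $\theta$ in this case.

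For the inductive steps I would handle $\neg$, $\then$, and $\forall$ (the only official connectives/quantifier; $\wedge,\vee,\nsc,\exists$ reduce to these by \ref{item:ftr/defd}), and the non-simple case reduces to $\theta^*$ by \ref{item:ftr/nonsimple}, invoking Lemma~\ref{lem:fma-rel} to bridge $\theta$ and $\theta^*$ at the level of $\lang(T)$-proofs — but since $s\forces_\tau\theta$ is \emph{defined} to be $s\forces_\tau\theta^*$ in that case, this step is really just unfolding the definition, so the substantive induction is on simple formulas. For $\theta = \neg\psi$: here $s \forces_\tau \neg\neg\psi$ unfolds (via \ref{item:ftr/neg} applied twice, using that $\neg\psi$ is simple when $\psi$ is) to something containing $\faexteq[\tau]{s'}s{s' \nforces_\tau \neg\psi}$, i.e.\ for every $s' \extendseq_\tau s$ it is not the case that every $s'' \extendseq_\tau s'$ fails to force $\psi$; combined with the assumed $s \forces_\tau \neg\psi$ (hence $s' \nforces_\tau \psi$ for all $s' \extendseq_\tau s$, after using \ref{item:fint/trans}) we reach a contradiction by chasing quantifiers over conditions. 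For $\theta = \psi \then \eta$ and $\theta = \forall w\,\psi$, the two forced formulas both begin with a block of ``$s\forces_\tau v_i\defd$'' conjuncts followed by a $\forall s' \extendseq s\,\exists s'' \extendseq s'$ quantifier pattern; the contradiction is extracted by picking witnesses, applying transitivity \ref{item:fint/trans} and reflexivity \ref{item:fint/refl} to align the conditions, and then invoking the inductive instance of the lemma for the subformula(s) $\psi$ (and $\eta$). In each step the new proof is obtained from the subproof(s) by prepending a fixed schema of logical inferences whose size depends only on the number of displayed variables and the arity data, so the total size is polynomial in $|\theta|$ and the construction is clearly polynomial-time.

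The main obstacle I anticipate is the bookkeeping in the $\then$ and $\forall$ cases: the definitions of $s\forces_\tau(\psi\then\eta)$ and $s\forces_\tau\forall w\,\psi$ each carry a $\forall s'\,\exists s''$ alternation over conditions, and the definition of $s\forces_\tau\neg(\psi\then\eta)$, when unfolded via \ref{item:ftr/neg}, has an \emph{outer} $\forall s'\extendseq s$ wrapping a negation of that alternation. Getting these to cancel requires a careful instantiation sequence — essentially a small forcing-theoretic ``genericity chase'' using only \ref{item:fint/refl}--\ref{item:fint/trans} — and one must check that the result really is a logical contradiction rather than merely something false in generic extensions. The point that makes it go through is that we only need the \emph{syntactic} contradiction $\neg(A \wedge B)$, so after instantiating we just need a propositional-logic derivation of falsum from the instantiated forms of ``$s\forces\theta$'' and ``$s\forces\neg\theta$'' together with the inductive hypothesis; this is routine but must be written so that its size is uniformly bounded in terms of the arities and variable counts appearing in the definitions, which our finite-language convention guarantees.
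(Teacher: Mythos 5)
Your ``base case'' argument is actually the entire proof, and the induction you wrap around it is unnecessary. Look again at clause~\ref{item:ftr/neg}: it defines $s\forces_\tau\neg\theta(\bar v)$ for \emph{every} simple $\lang'$-formula~$\theta$, not only for simple atomic ones, and that definition always carries $\faexteq[\tau]{s'}s{s'\nforces_\tau\theta(\bar v)}$ as a conjunct. So for any simple~$\theta$ whatsoever, assuming $s\in\Cond[\tau]$ you can instantiate $s'\defeq s$ (justified by reflexivity~\ref{item:fint/refl}) inside that conjunct to obtain $s\nforces_\tau\theta(\bar v)$, which literally contradicts the other conjunct $s\forces_\tau\theta(\bar v)$. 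For non-simple~$\theta$, clause~\ref{item:ftr/nonsimple} together with the fact that $(\cdot)^*$ commutes with $\neg$ reduces the claim to the simple formula $\theta^*$, again with no recursion. The resulting proof is a fixed-shape derivation of uniformly bounded size in $|\theta|$, trivially constructible in polynomial time. This is what the paper means by ``Apply~\ref{item:ftr/neg}.''

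Your inductive steps for $\neg$, $\then$, and $\forall$ are not wrong, but they reprove a direct consequence of the definition by a harder route, and along the way you quietly invoke more than you acknowledge (e.g.\ in the $\neg\psi$ case, pushing the defined-ness conjunct across the $\forall s'\extendseq s$ quantifier needs monotonicity~\ref{item:fint/monodefd}, not just \ref{item:fint/refl}--\ref{item:fint/trans}). Since none of this machinery is needed, the cleaner move is simply to observe that $s\forces_\tau\neg\theta$ already \emph{contains} the negation of $s\forces_\tau\theta$ after one instantiation, for arbitrary~$\theta$, and stop there.
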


\begin{proof}
Apply \ref{item:ftr/neg}.
\end{proof}

\begin{lem}\label{lem:monodense}
Let $\tau$ be a forcing interpretation of $\lang(T')$ in a theory~$T$.
Then there is a polynomial-time procedure which, given an $\lang(T')$~formula $\theta(\bar v)$,
outputs proofs in $T$ of:
\begin{enumerate}
\item\label{part:monodense/mono}
 \begin{math}
  \fain{s,s'}{\Cond[\tau]}{\fa{\bar v}{\bigl(
   s'\extendseq_\tau s\wedge\dquot{s\forces_\tau\theta(\bar v)}
   \then\dquot{s'\forces_\tau \theta(\bar v)}
  \bigr)}}
 \end{math}; and
\item\label{part:monodense/dense}
 \begin{math}
  \fain s{\Cond[\tau]}{\fa{\bar v}{\bigl(
   \faexteq[\tau]{s'}s{\exexteq[\tau]{s''}{s'}{\dquot{s''\forces_\tau \theta(\bar v)}}}
   \then \dquot{s\forces_\tau\theta(\bar v)}
  \bigr).}}
 \end{math}
\end{enumerate}
\end{lem}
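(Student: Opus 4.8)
The plan is to reduce immediately to simple formulas and then proceed by cases on the outermost connective, proving part~\ref{part:monodense/mono} first and using it inside part~\ref{part:monodense/dense}. If $\theta$ is not simple, then by clause~\ref{item:ftr/nonsimple} the formula $\dquot{s\forces_\tau\theta(\bar v)}$ \emph{is} the formula $\dquot{s\forces_\tau\theta^*(\bar v)}$, and $\theta^*$ is simple and computable from $\theta$ in polynomial time (Definition~\ref{defn:simple-tr}); so it is enough to produce the two proofs for simple $\theta$ and then run that procedure on $\theta^*$. For simple $\theta$, every subformula is again simple, so $\theta$ is built from simple atomic formulas by $\neg$, $\then$, $\forall$, and each subformula is governed by one of the clauses~\ref{item:ftr/neg}--\ref{item:ftr/forall}.

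For part~\ref{part:monodense/mono} (monotonicity) the base case ($\theta$ simple atomic) is exactly axiom~\ref{item:fint/monotone}, and the compound cases need no recursion at all. Indeed, inspecting~\ref{item:ftr/neg}--\ref{item:ftr/forall}, the formula $\dquot{s\forces_\tau\theta(\bar v)}$ is a conjunction of definedness statements $\dquot{s\forces_\tau v_i\defd}$ together with a matrix in which $s$ occurs only inside subformulas of the shape $s'\extendseq_\tau s$; so if $t\extendseq_\tau s$ then every condition extending $t$ also extends $s$ by transitivity~\ref{item:fint/trans}, which transfers the matrix to $t$, while the definedness conjuncts transfer by~\ref{item:fint/monodefd}. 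This yields a fixed template proof for each connective with the subformulas of $\theta$ substituted verbatim.

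For part~\ref{part:monodense/dense} (density) the base case is axiom~\ref{item:fint/density}. In each compound case one assumes that below every condition extending $s$ some condition forces $\theta(\bar v)$ (the density hypothesis) and must derive $\dquot{s\forces_\tau\theta(\bar v)}$; here the definedness conjuncts of $\dquot{s\forces_\tau\theta(\bar v)}$ follow from the density hypothesis via~\ref{item:fint/densedefd}, using that each $\dquot{s''\forces_\tau\theta(\bar v)}$ carries the conjuncts $\dquot{s''\forces_\tau v_i\defd}$ (Lemma~\ref{lem:f>defd}(i)). For the remaining matrix I would fix a condition $p\extendseq_\tau s$ and split classically. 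If $\theta$ is $\theta_0\then\theta_1$ and $p\nforces_\tau\theta_0$, then $p$ itself is the required witness, vacuously (reflexivity~\ref{item:fint/refl}); if $p\forces_\tau\theta_0$, apply the density hypothesis to $p$ to get $q\extendseq_\tau p$ with $\dquot{q\forces_\tau(\theta_0\then\theta_1)}$, lift this to $q\forces_\tau\theta_0$ by part~\ref{part:monodense/mono} for $\theta_0$, instantiate the matrix of $\dquot{q\forces_\tau(\theta_0\then\theta_1)}$ at $q$ (reflexivity) to obtain $q'\extendseq_\tau q$ with $q'\forces_\tau\theta_1$, and note $q'\extendseq_\tau p$ by~\ref{item:fint/trans}; then $q'$ is the witness. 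The case $\theta=\fa w{\theta_0(\bar v,w)}$ runs identically, with ``$p\forces_\tau w\defd$'' in place of ``$p\forces_\tau\theta_0$'' and~\ref{item:fint/monodefd} in place of part~\ref{part:monodense/mono}. If $\theta=\neg\theta_0$, suppose toward a contradiction that $p\forces_\tau\theta_0$ for some $p\extendseq_\tau s$; the density hypothesis gives $q\extendseq_\tau p$ with $\dquot{q\forces_\tau\neg\theta_0}$, whence $q\nforces_\tau\theta_0$, contradicting the $q\forces_\tau\theta_0$ supplied by part~\ref{part:monodense/mono} for $\theta_0$. Again this is a fixed template per connective.

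The only recursion anywhere is that the density template for $\neg\theta_0$ and for $\theta_0\then\theta_1$ invokes the monotonicity proof of the immediate subformula $\theta_0$; since that proof is itself non-recursive (it uses only~\ref{item:fint/trans},~\ref{item:fint/monodefd}, and~\ref{item:fint/monotone}), both procedures run in polynomial time, and every formula occurring has size polynomial in $|\theta|$, being dominated by $\dquot{s\forces_\tau\theta(\bar v)}$. I expect the density step to be the only genuine obstacle: one has to coordinate the case split, the application of the density hypothesis, and the use of monotonicity while correctly tracking the definedness conjuncts throughout. The monotonicity argument and the reduction from non-simple to simple formulas should be routine.
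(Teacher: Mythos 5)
Your proposal is correct and follows the same Avigad-style route the paper cites: case analysis on the outermost connective of (the simple translation of) $\theta$, with the atomic base cases discharged by \ref{item:fint/monotone} and \ref{item:fint/density} and the compound cases by a fixed template per connective. Your added observation — that the monotonicity compound cases are genuinely non-recursive because $s$ appears in $\dquot{s\forces_\tau\theta(\bar v)}$ only in the definedness conjuncts (handled by \ref{item:fint/monodefd}) and in downward-persistent positions $s'\extendseq_\tau s$ (handled by \ref{item:fint/trans}), and that density needs only a single call to monotonicity of the immediate subformula — is a nice sharpening but is consistent with the paper's one-line description of the construction as induction on the structure of $\theta$.
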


\begin{proof}
These are part of Lemma~4.3 and Lemma~4.6 in Avigad~\cite{art:formforce},
except that we additionally need to pay attention to the computational complexity
of the proof constructions. The proofs are built using induction on the structure of $\theta$,
with the inductive step split into cases depending on the outermost logical connective in $\theta$.
In each case, the construction is straightforward.
\end{proof}

\begin{lem}\label{lem:stermsubst}
Let $\tau$ be a forcing interpretation of $\lang(T')$ in a theory~$T$.
There is a polynomial-time procedure which ---
given an $\lang(T')$~term~$t(\bar v)$ with exactly the free variables shown and
a simple $\lang(T')$~formula $\theta(\bar u,w_0)$
such that $\theta(\bar u,t(\bar v))$ is also simple, and both~$w$ and~$t(\bar v)$
    are substitutable for~$w_0$ in $\theta(\bar u,w_0)$ ---
outputs a proof in $T$ of
  \[
   \xfain s{\Cond[\tau]} \xfa{\bar u,\bar v,w}
   \bigl(
    \dquot{s\forces_\tau w=t(\bar v)}
    \then(
     \dquot{s\forces_\tau\theta(\bar u,w)}
     \nsc\dquot{s\forces_\tau\theta(\bar u,t(\bar v))}
    )
   \bigr).
  \]
\end{lem}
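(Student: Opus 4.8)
The plan is to reduce this lemma to the axiom \ref{item:fint/stermsubst} of a forcing interpretation by induction on the structure of the term $t(\bar v)$. The axiom \ref{item:fint/stermsubst} already handles the case when $t$ is a \emph{simple} term, so the work is to strip off function symbols one at a time. First I would set up the induction: if $t$ is a variable, then $t(\bar v) = v_i$ for some $i$, and the claimed equivalence is essentially trivial (using \ref{item:fint/=refl}, \ref{item:fint/=sym}, \ref{item:fint/=trans} together with Lemma~\ref{lem:f>defd}(ii) on renaming of bound variables, since substituting $v_i$ for $w_0$ in $\theta$ gives a formula whose forcing relation coincides with $s \forces_\tau \theta(\bar u, w)[w/v_i]$). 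For the inductive step, write $t(\bar v) = f(t_1(\bar v), \dots, t_m(\bar v))$. Introduce fresh name variables $z_1, \dots, z_m$ intended to denote the values of $t_1, \dots, t_m$. Using \ref{item:fint/functions} we can find, densely below any condition, an extension forcing $z_j\defd$ and $z_j = t_j(\bar v)$; more precisely, I would first apply the induction hypothesis to each $t_j$ (with the simple atomic formula being, say, an equation $w_0 = z_j'$ for a further fresh variable, or directly with $\theta$ after one layer of unwinding) to reduce ``$s \forces_\tau \theta(\bar u, w = f(t_1(\bar v),\dots))$'' to a statement about the simple term $f(z_1, \dots, z_m)$, at which point \ref{item:fint/stermsubst} applies directly.

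Concretely, the key steps in order are: (1) reduce to the case where $\theta$ is simple atomic, by invoking \ref{item:ftr/nonsimple}/\ref{item:ftr/neg}--\ref{item:ftr/forall} and noting that for the compound connectives the forcing clauses are built uniformly from the simple-atomic case, so an induction on $\theta$ mirroring the proof of Lemma~\ref{lem:monodense} pushes the term-substitution through $\neg, \then, \forall$; (2) for simple atomic $\theta$, do the induction on $t$ as above, the base case being \ref{item:fint/stermsubst} and the single nontrivial step being the function-symbol layer, handled by combining \ref{item:fint/functions} (existence and uniqueness, densely, of a name for $f(\bar z)$), \ref{item:fint/stermsubst} (to substitute that name), and the density lemma Lemma~\ref{lem:monodense}\eqref{part:monodense/dense} (to conclude the forcing statement at the original condition $s$ from the fact that it is forced densely below $s$); (3) track the substitutability bookkeeping using Lemma~\ref{lem:f>defd}(ii) and the convention that $s, s', s''$ are disjoint from the name variables, so that all the substitutions $[w_0 / w]$, $[w_0/t(\bar v)]$, $[w_0/f(\bar z)]$ are legal and the hypothesis ``$w$ and $t(\bar v)$ are substitutable for $w_0$'' propagates correctly to the intermediate steps; (4) observe that at each stage the proof object is assembled from a fixed finite list of schematic proof-figures (one per connective, one per the finitely many function symbols of $\lang(T')$, which is finite by our standing convention) with the sizes of the sub-proofs adding up, so the whole construction runs in polynomial time — this is the same ``straightforward but complexity-sensitive'' packaging as in Lemma~\ref{lem:monodense}.

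The main obstacle I expect is the bookkeeping in step~(2): iterating \ref{item:fint/stermsubst} once per function symbol of $t$ means descending through a chain of conditions $s \extendseq s' \extendseq s'' \extendseq \dots$, and at each layer one must re-establish the ``densely below'' form so that Lemma~\ref{lem:monodense}\eqref{part:monodense/dense} can collapse it back to a statement at $s$; keeping the quantifier structure $\faexteq{s'}{s}\exexteq{s''}{s'}(\dots)$ correctly nested, and verifying that the fresh variables $z_1, \dots, z_m$ really are substitutable at every point, is where the argument is fiddly rather than deep. None of this requires any new idea beyond what is already packaged in the axioms \ref{item:fint/functions}, \ref{item:fint/stermsubst}, Lemma~\ref{lem:f>defd}, and Lemma~\ref{lem:monodense}; it is an unwinding of definitions carried out with an eye on proof size, exactly parallel to the corresponding lemmas in Avigad~\cite{art:formforce} and ultimately traceable to his Lemma~4.3.
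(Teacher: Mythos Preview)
Your step~(1) --- induction on the structure of~$\theta$, pushing the substitution through $\neg,\then,\forall$ down to the simple atomic case --- is exactly what the paper does, and is the whole argument. Your step~(2), the further induction on~$t$, is not needed and in fact never gets off the ground: once you are at a simple atomic subformula~$\alpha(\bar u,w_0)$ of~$\theta$, the hypothesis that $\theta(\bar u,t(\bar v))$ is simple forces $\alpha(\bar u,t(\bar v))$ to be simple atomic too. Running through the two kinds of simple atomic formula (no function symbol, or an equation with exactly one), you see that if $w_0$ occurs in~$\alpha$ at all then $t$ must itself be a simple term, so axiom~\ref{item:fint/stermsubst} applies directly; if $w_0$ does not occur in~$\alpha$, the equivalence is trivial. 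There is no compound-$t$ case to unwind, and the machinery you set up with fresh names $z_1,\dots,z_m$, \ref{item:fint/functions}, and the density collapse is never invoked.

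So your proposal is correct but takes a detour. The paper's proof is simply ``induction on the structure of~$\theta$'', with the base case supplied by~\ref{item:fint/stermsubst} as written; the point you may have missed is that the simplicity constraint on~$\theta(\bar u,t(\bar v))$ already pins down~$t$ at the leaves, so no separate term induction is required.
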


\begin{proof}
This is part of Lemma~4.3 in Avigad~\cite{art:formforce},
again with attention paid to the computational complexity of the construction.
As previously, one proceeds by induction on the structure of~$\theta$.
\end{proof}

\begin{defn}
A forcing interpretation~$\tau$ of a theory~$T'$ in a theory~$T$
is \defm{polynomial} if there is polynomial-time procedure which,
given any $\sigma\in T'$, outputs a proof in $T$ of the sentence
$\fain s{\Cond[\tau]}{s\forces_\tau \sigma}$.
\end{defn}


Note that if $T'$ is finitely axiomatized, then any forcing
interpretation of $T'$ is automatically a polynomial
forcing interpretation.


\begin{prop}\label{prop:logic+}
Let $\tau$ be a forcing interpretation of a theory~$T'$ in a theory~$T$.
For all $\lang(T')$~formulas $\phi(\bar v),\psi(\bar v)$,
 if $T'+\phi(\bar v)\proves\psi(\bar v)$,
  then $T$~proves
  \begin{equation}\label{eqn:proving-forcing-fla}
   \fain s{\Cond[\tau]}{\fa{\bar v}{\bigl(
    s\forces_\tau\phi(\bar v)\then s\forces_\tau\psi(\bar v)
   \bigr)}}.
  \end{equation}
If $\tau$ is polynomial,
 then a proof of \eqref{eqn:proving-forcing-fla} in $T'$ can be found
  in polynomial time given $\phi, \psi$, and a proof $\pi$
   of~$\psi(\bar v)$ from~$T'+\phi(\bar v)$.
\end{prop}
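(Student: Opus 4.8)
The plan is to prove the soundness statement \eqref{eqn:proving-forcing-fla} by induction on the length of the proof $\pi$ of $\psi(\bar v)$ from $T'+\phi(\bar v)$ in our Hilbert-style system, and to observe that this induction can be run constructively in polynomial time when $\tau$ is polynomial. First I would fix a proof $\pi$, possibly after applying Lemma~\ref{lem:fma-rel} and Lemma~\ref{lem:pf-rel} so that every formula occurring in $\pi$ is simple; these lemmas give polynomial-time translations, so the overall procedure stays polytime. Then for each line $\chi$ of $\pi$ I would produce, in $T$, a proof of $\fain s{\Cond[\tau]}{\fa{\bar v}{(s\forces_\tau\phi(\bar v)\then s\forces_\tau\chi(\bar v))}}$, where $\bar v$ collects all free variables appearing in $\pi$. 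At the very end this yields the statement for $\chi=\psi$. Note \ref{item:ftr/then}, \ref{item:ftr/forall}, \ref{item:ftr/neg} make the forcing of a compound formula a genuine first-order formula with $\faexteq{}{}{}$/$\exexteq{}{}{}$ quantifiers over conditions, so "$s\forces_\tau(\phi\then\chi)$" and "$s\forces_\tau\phi\then s\forces_\tau\chi$" are not literally the same object; the equivalence between them, under the hypotheses of a forcing interpretation, is exactly the content of the semantic deduction/modus ponens lemmas I will need and can establish with polynomial-size proofs using \ref{item:fint/refl}, \ref{item:fint/trans}, Lemma~\ref{lem:monodense}\eqref{part:monodense/mono}, and Lemma~\ref{lem:monodense}\eqref{part:monodense/dense}.

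The case analysis runs as follows. If $\chi$ is a non-logical axiom of $T'$, then $\fain s{\Cond[\tau]}{s\forces_\tau\chi}$ is given by \ref{item:fint/theory}, and since $\tau$ is polynomial this proof is found in polynomial time; weakening to include $s\forces_\tau\phi(\bar v)$ as an antecedent and adjoining dummy universal quantifiers over $\bar v$ is routine (using Lemma~\ref{lem:f>defd}(i) to see that $s\forces_\tau\chi$ already carries the relevant $s\forces\bar w\defd$ conjuncts). If $\chi=\phi(\bar v)$ itself, the statement is trivial. If $\chi$ is a logical axiom — a propositional-tautology instance, a quantifier axiom $\fa{w}{\rho}\then\rho[w/t]$, an equality axiom, or the generalization-on-a-constant axiom — I would verify that its forcing translation is provable in $T$ using a fixed finite stock of lemmas: Lemma~\ref{lem:LEM} (consistency of forcing, handling the propositional schemes via the definition of $s\forces\neg\theta$), Lemma~\ref{lem:stermsubst} together with \ref{item:fint/stermsubst} (handling substitution of simple terms), \ref{item:fint/functions} (handling the function-symbol axioms and reduction of general terms to simple ones), and \ref{item:fint/=refl}--\ref{item:fint/=trans} (handling equality). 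Each of these is a finite scheme or a polytime-uniform family, so the proof of the forced instance has size polynomial in $|\chi|$. If $\chi$ is obtained by modus ponens from $\eta$ and $\eta\then\chi$, I combine the inductively obtained proofs of "$s\forces\phi\then s\forces\eta$" and "$s\forces\phi\then s\forces(\eta\then\chi)$" using the semantic modus ponens lemma sketched above; the proof of that lemma for the specific formulas $\eta,\chi$ is found in polynomial time (its structure depends only on $|\eta|+|\chi|$). If $\chi=\fa{w}{\eta}$ is obtained by generalization from $\eta$ (with $w$ not free in $T'\cup\{\phi\}$), I pass from "$s\forces\phi\then s\forces\eta$" to "$s\forces\phi\then s\forces\fa{w}{\eta}$" by unwinding \ref{item:ftr/forall}: given $s'\extendseq s$ and a name $w$ with $s'\forces w\defd$, apply the inductive hypothesis with the condition $s'$ and use Lemma~\ref{lem:monodense}\eqref{part:monodense/mono} to carry $s\forces\phi$ up to $s'$ (here we use that $w$ does not occur in $\phi$, so $s'\forces\phi$ makes sense with the same $\bar v$), then take $s''=s'$.

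The main obstacle, and the only place requiring real care, is the interface between syntactic and semantic connectives — i.e. proving once and for all, with polynomial-size proofs uniformly in the formulas involved, the "semantic" versions of modus ponens and of the deduction theorem for $\forces_\tau$, and checking that the definitions \ref{item:ftr/neg}--\ref{item:ftr/forall} really do validate all the propositional axiom schemes of the chosen proof system. This is precisely the content of Avigad's Lemma~4.6 and the surrounding development in \cite{art:formforce}; the novelty here is purely the complexity-theoretic bookkeeping, namely observing that every one of Avigad's constructions is (a) driven by the syntactic structure of the formulas and the fixed finite list of axiom schemes, and (b) therefore carried out by a polynomial-time algorithm whose output proof has size polynomial in the input. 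Everything else — weakening, adding dummy quantifiers, composing subproofs, invoking Lemmas~\ref{lem:fma-rel}--\ref{lem:stermsubst} — is bounded-depth manipulation that clearly stays within polynomial time and polynomial size, so assembling the pieces gives both the provability claim and the polynomial-simulation claim.
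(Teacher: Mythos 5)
Your proposal follows exactly the strategy of the paper's own proof: replace $\pi$ by its simplification $\pi^*$ via Lemma~\ref{lem:pf-rel}, then prove $\fain s{\Cond[\tau]}{\fa{\bar v}{(s\forces_\tau\phi\then s\forces_\tau\chi)}}$ for each line $\chi$ of $\pi^*$ by induction on the proof, using \ref{item:fint/theory} (plus \ref{item:ftr/nonsimple}) for non-logical axioms and the forcing lemmas for logical axioms and modus ponens, with the polynomial-time claim following from polynomiality of $\tau$ and the uniformity of the auxiliary lemmas. The paper states this more tersely and delegates the per-line induction to Avigad's Proposition~4.8; you flesh out the same cases, so the two arguments coincide.
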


\begin{proof}
Let~$\pi^*$ be the proof obtained from $\pi$ according to Lemma \ref{lem:pf-rel}.
By \ref{item:ftr/nonsimple} and \ref{item:fint/theory},
for each non-logical axiom $\sigma$ used in $\pi^*$
we can find a proof in $T$ of $\fain s{\Cond} {\fa{\bar v} (s \forces \phi(\bar v) \then s \forces \sigma)}$.
If $\tau$ is a polynomial forcing interpretation of $T'$ in $T$,
these proofs can be found in polynomial time.

We then construct analogous proofs for each line $\sigma$ in $\pi^*$,
by a routine induction on the structure of~$\pi^*$;
cf.~Proposition~4.8 in Avigad~\cite{art:formforce}.
By \ref{item:ftr/nonsimple}, in the case of the last line this is a proof
of $\fain s{\Cond} {\fa{\bar v}(s \forces \phi(\bar v) \then s \forces \psi(\bar v))}$.
\end{proof}

We note the following important special case of Proposition \ref{prop:logic+}.

\begin{cor}\label{cor:logic}
Let $\tau$ be a forcing interpretation of~$T'$ in~$T$.
Then for each $\lang(T')$~sentence~$\sigma$, if $T'$ proves $\sigma$, then $T$ proves
$\fain s{\Cond[\tau]}{(s\forces_\tau\sigma)}$. Moreover, if $\tau$ is polynomial,
then a proof of $\fain s{\Cond[\tau]}{(s\forces_\tau\sigma)}$ in $T$
can be found algorithmically in polynomial time given a proof of $\sigma$ in $T'$.
\end{cor}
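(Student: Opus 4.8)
\end{cor}

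\begin{proof}
The plan is to read this off Proposition~\ref{prop:logic+} as the degenerate case in which the parameter list $\bar v$ is empty and the hypothesis $\phi$ is a sentence already provable in pure logic. Let $\phi$ be the sentence $\fa x{(x=x)}$. Since $T'\proves\sigma$, any proof of $\sigma$ in $T'$ is in particular a proof of $\sigma$ from $T'+\phi$, so Proposition~\ref{prop:logic+} applies and produces a proof in $T$ of
\begin{equation*}
 \fain s{\Cond[\tau]}{\bigl(s\forces_\tau\phi\then s\forces_\tau\sigma\bigr)};
\end{equation*}
moreover, if $\tau$ is polynomial, this proof is obtained in polynomial time from the given proof of $\sigma$.

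It then remains to write down, independently of $\sigma$, a proof in $T$ of $\fain s{\Cond[\tau]}{s\forces_\tau\phi}$; a fixed, constant-size piece of propositional reasoning carried out under the quantifier $\fain s{\Cond[\tau]}{\cdots}$ then combines the two proofs into the required proof of $\fain s{\Cond[\tau]}{(s\forces_\tau\sigma)}$. To build the missing proof, I would unwind the $\forall$-clause~\ref{item:ftr/forall} in the definition of $s\forces_\tau\theta$: the formula $s\forces_\tau\fa x{(x=x)}$ is
\begin{equation*}
 \fa x{\faexteq[\tau]{s'}s{\exexteq[\tau]{s''}{s'}{\bigl(\dquot{s'\forces_\tau x\defd}\then\dquot{s''\forces_\tau x=x}\bigr)}}}.
\end{equation*}
So, arguing in $T$, fix a condition $s$, an element $x$, and $s'\extendseq_\tau s$ with $s'\forces_\tau x\defd$. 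Since $s'\extendseq_\tau s$ contains $s'\in\Cond[\tau]$ as a conjunct by construction (clause~(FT1)), we get $s'\in\Cond[\tau]$, so clause~\ref{item:fint/=refl} yields $s'\forces_\tau x=x$; taking $s''\defeq s'$ and using reflexivity of $\extendseq_\tau$ (clause~\ref{item:fint/refl}) discharges the inner existential. This argument draws only on the finitely many fixed formulas of $\tau$ and on fixed instances of clauses~\ref{item:fint/=refl} and~\ref{item:fint/refl}, so the proof it yields has size bounded by a constant and can be emitted in constant time.

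I do not expect any genuine obstacle: the content is essentially Proposition~\ref{prop:logic+} applied to a vacuous hypothesis. The single point that needs a moment's care is that the forcing translation does not by itself supply a formula asserting that $s$ forces a trivial truth, so one has to pick \emph{some} logical validity and verify that it is provably forced by every condition; the equality and preorder clauses of a forcing interpretation (clauses~\ref{item:fint/=refl} and~\ref{item:fint/refl}) are precisely what make the choice $\fa x{(x=x)}$ work. The polynomial-time bound is then immediate, the construction being a polynomial-time invocation of Proposition~\ref{prop:logic+} followed by the attachment of a constant-size certificate.
\end{proof}
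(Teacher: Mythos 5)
Your argument is correct and is a faithful unpacking of the paper's terse remark that the corollary is ``a special case of Proposition~\ref{prop:logic+}.'' The key point you identify --- that one must pick a logical truth~$\phi$ and explicitly verify that every condition forces it --- is exactly the detail the paper leaves implicit, and your choice $\fa x{(x=x)}$ is well adapted: it is a \emph{simple} sentence (no function symbols, so no $(\cdot)^*$ translation is needed), and clauses~\ref{item:fint/=refl} and~\ref{item:fint/refl} are precisely what make $\fain s{\Cond[\tau]}{(s\forces_\tau\phi)}$ go through with a constant-size proof. One could alternatively read ``special case'' as simply re-running the induction in the proof of Proposition~\ref{prop:logic+} with the hypothesis $\phi$ deleted (using \ref{item:fint/theory} directly for the axioms); your route of instantiating $\phi$ to a trivially forced sentence avoids reopening that proof and is equally valid and equally polynomial.
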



\begin{defn}\label{def:reflects}
Let $T,T'$ be theories and
    $\Gamma$ be a set of sentences in the language~$\lang(T)\cap\lang(T')$.
A forcing interpretation~$\tau$ of~$T'$ in~$T$
 is said to be \defm{$\Gamma$-reflecting}
  if, for all $\gamma\in\Gamma$, $T$ proves
  \begin{equation}\label{eqn:reflection}
   \fain s{\Cond[\tau]}{(s\forces_\tau\gamma)}\then\gamma.
  \end{equation}
The interpretation is \defm{polynomially $\Gamma$-reflecting}
if a $T$-proof of \eqref{eqn:reflection} can be found in polynomial
time on input $\gamma \in \Gamma$.
\end{defn}

\begin{thm}[essentially Avigad~{\cite[Section~10]{art:formforce}}]\label{thm:FI>nspeedup}
Let $T,T'$ be theories and
$\Gamma$~be a set of sentences in the language $\lang(T)\cap\lang(T')$.
If there is a $\Gamma$-reflecting forcing interpretation~$\tau$ of~$T'$ in~$T$,
then $T'$~is $\Gamma$-conservative over~$T$.
Moreover, if such~$\tau$ is polynomial and polynomially $\Gamma$-reflecting,
 then $T$~polynomially simulates~$T'$ with respect to~$\Gamma$.
\end{thm}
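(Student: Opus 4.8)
The plan is to derive this from Corollary~\ref{cor:logic} together with the reflection property. Suppose first that $\tau$ is a $\Gamma$-reflecting forcing interpretation of $T'$ in $T$, and let $\gamma \in \Gamma$ with $T' \proves \gamma$. By Corollary~\ref{cor:logic}, $T$ proves $\fain s{\Cond[\tau]}{(s\forces_\tau\gamma)}$. By the definition of $\Gamma$-reflecting, $T$ also proves $\fain s{\Cond[\tau]}{(s\forces_\tau\gamma)}\then\gamma$. Combining these two, $T \proves \gamma$, which gives $\Gamma$-conservativity. Note that this step uses only the bare existence of the two $T$-proofs, not any bound on their size.

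For the quantitative half, the plan is to trace the same argument but with attention to the sizes of the objects produced, which requires $\tau$ to be both polynomial and polynomially $\Gamma$-reflecting. Given a proof $\pi$ of $\gamma$ in $T'$, the polynomial-simulation procedure will: (i) run the polynomial-time procedure from Corollary~\ref{cor:logic} on $\pi$ to obtain a $T$-proof $\pi_1$ of $\fain s{\Cond[\tau]}{(s\forces_\tau\gamma)}$ --- here one should recall that $|\gamma| \le |\pi|$, so the formula $s\forces_\tau\gamma$ and all the ancillary data have size polynomial in $|\pi|$, and $|\pi_1|$ is polynomial in $|\pi|$; (ii) run the polynomial-$\Gamma$-reflection procedure on input $\gamma$ (which can be read off $\pi$) to obtain a $T$-proof $\pi_2$ of $\fain s{\Cond[\tau]}{(s\forces_\tau\gamma)}\then\gamma$, again of size polynomial in $|\gamma|$ and hence in $|\pi|$; (iii) concatenate $\pi_1$ and $\pi_2$ and append a constant number of lines applying modus ponens (and the logical manipulation turning $\fain s{\Cond[\tau]}{(s\forces_\tau\gamma)}$, i.e.\ the universal sentence, together with axiom \ref{item:fint/ex_Cond} $\ex s{(s\in\Cond[\tau])}$, into $\fain s{\Cond[\tau]}{(s\forces_\tau\gamma)}$ in the form the implication $\pi_2$ expects --- a fixed-size inference once the nonemptiness of $\Cond[\tau]$ is available). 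Composition of polynomial-time procedures is polynomial-time, so the whole thing runs in polynomial time, and the output proof has size polynomial in $|\pi|$.

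The main point requiring a little care --- and the closest thing to an obstacle --- is the bookkeeping in step (iii): the sentence produced by Corollary~\ref{cor:logic} has the shape $\fa s{(s\in\Cond[\tau] \then (s\forces_\tau\gamma))}$, whereas the reflection antecedent in \eqref{eqn:reflection} has exactly that shape as well, so in fact modus ponens applies directly and no extra instantiation is needed; one should simply double-check that the two occurrences of $\fain s{\Cond[\tau]}{(s\forces_\tau\gamma)}$ are \emph{literally} the same string (same bound-variable name, same parenthesization) in both subproofs, or insert a constant-size renaming/propositional step if they are not. This is the kind of detail that is invisible in the model-theoretic statement but must be handled when one claims a genuine polynomial simulation, and it is exactly why the theorem is credited as ``essentially'' Avigad's: the conservativity direction is immediate, and the simulation direction is a routine but honest concatenation-and-composition argument.
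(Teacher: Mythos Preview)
Your proposal is correct and follows essentially the same approach as the paper, which simply says to combine \ref{item:fint/ex_Cond}, Corollary~\ref{cor:logic}, and Definition~\ref{def:reflects}. Your version is considerably more detailed than the paper's one-line proof, but the underlying argument---apply Corollary~\ref{cor:logic}, apply the reflection property, and compose via modus ponens, tracking polynomial bounds in the quantitative case---is identical.
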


\begin{proof}
Combine \ref{item:fint/ex_Cond}, Corollary \ref{cor:logic}, and Definition \ref{def:reflects}.
\end{proof}


\section{Ramsey for pairs: polynomial simulation in $\RCA_0$}\label{sec:ramsey}

In this section, we prove the main theorem of our paper.

\begin{thm}\label{thm:simulation-rt22-rca0}
$\RCA_0$ polynomially simulates $\WKL_0 + \RT^2_2$ with respect to $\forall\Sigma^0_2$ sentences.
\end{thm}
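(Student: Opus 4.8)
The plan is to construct a polynomial, polynomially $\forall\Sigma^0_2$-reflecting forcing interpretation $\tau$ of $\WKL_0 + \RT^2_2$ in $\RCA_0$ and then invoke Theorem~\ref{thm:FI>nspeedup}. The shape of the construction will refine the initial-segment argument of Patey and the third author \cite{PY}: rather than producing a generic extension of a model of $\RCA_0$, the interpretation will describe a \emph{generic cut} $\I$ inside a (nonstandard) model, together with enough sets coded below it to satisfy $\WKL$ and to yield a homogeneous set for any given colouring of pairs. Concretely, a forcing condition will package a finite amount of data: an approximation to the cut, finitely many ``reserved'' $\alpha$-large finite sets witnessing partial homogeneity for the colourings named so far, and side conditions ensuring that the generic $\I$ avoids being closed under the relevant fast-growing functions while still being closed under $+,\cdot$ and, crucially, carrying a $\Sigma^0_1$-definable (in the generic) copy of $\RCA_0$. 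The names $u,v,\dots$ will denote elements and sets of the generic cut, with $s\forces v\defd$ asserting, roughly, that $v$ is pinned down below the current approximation to $\I$.

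The key steps, in order, are: (i) fix the forcing translation data $(s\in\Cond,\ \extendseq,\ s\forces v\defd,\ s\forces\alpha(\bar v))$ and check the purely syntactic clauses (FT1)--(FT4); (ii) verify in $\RCA_0$ the forcing-interpretation-of-logic axioms \ref{item:fint/ex_Cond}--\ref{item:fint/density} --- this is bookkeeping once the preorder on conditions is set up correctly, with density/monotonicity of $\defd$ following from the fact that any condition can be extended to decide any given name; (iii) verify \ref{item:fint/theory} for the axioms of $\RCA_0$ (these transfer more or less directly, since the generic cut models $\Delta^0_0$-comprehension and, by a genericity/pigeonhole argument, $\Sigma^0_1$-induction), for $\WKL$ (a standard density argument: given a name for an infinite binary tree, any condition extends to one forcing a name for an infinite path, using that the cut is a model of enough arithmetic), and for $\RT^2_2$ (given a name for a $2$-colouring of pairs, extend the condition to reserve, cofinally below the approximation to $\I$, larger and larger $\alpha$-large partially homogeneous sets of a single colour, so that in the generic cut their union is an unbounded --- hence infinite --- homogeneous set); (iv) check that $\tau$ is polynomial --- $\RCA_0$ and $\WKL_0 + \RT^2_2$ are finitely axiomatizable modulo a single induction/comprehension scheme, and the proofs of ``$\sigma$ is forced'' are uniform in the Gödel number of $\sigma$, so this is routine; (v) check polynomial $\forall\Sigma^0_2$-reflection: if $\gamma = \forall\bar x\,\forall\bar X\,\exists\bar y\,\forall\bar z\,\delta$ with $\delta$ bounded, then unwinding the forcing clauses \ref{item:ftr/neg}--\ref{item:ftr/forall} for $\gamma$, using that standard numbers are forced to be themselves and that $\Delta^0_0$ formulas are absolute between the ground model and the generic cut for such parameters, yields $(s\forces\gamma)\then\gamma$ by a proof whose size is polynomial (indeed linear) in $|\gamma|$.

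The main obstacle is step (iii) for $\RT^2_2$: making the generic-cut construction actually force an infinite homogeneous set while keeping every condition a \emph{finite} object manipulable by the formalized forcing. This is exactly where the initial-segment argument of \cite{PY} must be upgraded: the ordinary largeness bound on finite Ramsey's theorem is not constructive or uniform enough to be packaged into conditions and extended in $\RCA_0$, so we invoke the strengthened, constructive $\alpha$-largeness bound of \cite{KY:ramsey-combinatorics}. Getting the interplay right --- that each condition can be extended to reserve a fresh $\alpha$-large homogeneous block of a prescribed colour below the cut, that the colour can be stabilized cofinally, and that all of this is verifiable by $\RCA_0$-proofs (so $\RCA_0$ genuinely proves \ref{item:fint/theory} for $\RT^2_2$) --- is the technical heart of the argument. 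The secondary difficulty is ensuring the generic cut satisfies $\Sigma^0_1$-induction rather than merely $\bd\Sigma^0_1$; this is handled by building the cut as a sufficiently generic object, exploiting that a generic cut in a model of (enough) arithmetic is a model of $\ind\Sigma_1$.
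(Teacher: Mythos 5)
Your overall strategy---a formalized forcing argument producing a generic cut, leaning on the constructive $\alpha$-largeness bound of \cite{KY:ramsey-combinatorics}---is indeed the paper's strategy, but there is a genuine gap in how you propose to realize it, and the gap is precisely the part that makes the result non-routine. You propose to build the forcing interpretation \emph{directly} in $\RCA_0$, with conditions carrying ``an approximation to the cut'' plus reserved partially homogeneous blocks and side conditions. But in $\RCA_0$ there is no definable cut to approximate: the notion ``$\alpha$-large for some $\alpha$ above the cut,'' which you need in order to even state which finite sets are good conditions, cannot be expressed uniformly. The paper circumvents this by introducing an auxiliary theory $\RCA_0 + \I$ (where $\I$ is a fresh unary predicate axiomatized to be a proper cut closed under addition such that every infinite set has an $\omega^x$-large finite subset for some $x > \I$), does the forcing interpretation of $\WKL_0 + \RT^2_2$ there, and only afterwards eliminates $\I$. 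That elimination is itself nontrivial (Lemma~\ref{lem:case-distinction}): one splits into the case $\RCA_0 + \neg\ind\Sigma^0_2$, where a proper $\Sigma^0_2$-definable cut exists and gives a plain interpretation, and the case $\RCA_0 + \ind\Sigma^0_2$, which is handled not by forcing at all but by a Beklemishev-style reflection argument together with the Low Arithmetized Completeness Theorem, and the two proofs are combined by case distinction. Your proposal gives no mechanism to carry out or replace this step, and without it, $\RCA_0$ never gets a handle on the cut.

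A secondary divergence: the paper's forcing notion is far simpler than the Mathias-style one you sketch. A condition is just a single $\omega^x$-large finite set with $x > \I$, ordered by $\subseteq$; there are no reserved blocks, side conditions, or colour-stabilization to maintain. Forcing $\RT^2_2$ is a \emph{one-shot} application of Theorem~\ref{thm:RT22-largeness-main}: split a condition $s = s_0 \sqcup s_1$, find an $\omega^{x/300}$-large homogeneous $s' \subseteq s_0$, note $x/300 > \I$ because $\I$ is closed under addition, and $s'$ itself is the required stronger condition naming the infinite homogeneous set. Likewise, $\Sigma^0_1$ induction in the generic cut is verified by an explicit finite-pigeonhole argument on a block decomposition of the condition, not by an appeal to ``sufficient genericity.'' Your cofinal-reservation scheme would need $\bd\Sigma^0_2$-type collection just to keep the reservation process going, which is more than $\RCA_0$ (or $\RCA_0 + \I$) provides, so it is unlikely to close without reintroducing exactly the difficulty the auxiliary-theory route is designed to dodge.
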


This is a strengthening of the main result of \cite{PY} that $\WKL_0 + \RT^2_2$ is $\forall\Sigma^0_2$-conservative over
$\ind\Sigma^0_1$. Since $\RCA_0$ is polynomially simulated by $\ind\Sigma_1$ w.r.t.~$\Lone$ sentences
(see \cite{phd:Ignjatovic,ci:instrumentalism}
or consider the obvious interpretation of $\RCA_0$ in $\ind\Sigma_1$ and refer to~Section \ref{sec:forcing}),
Theorem \ref{thm:simulation-rt22-rca0} has the following immediate corollary.

\begin{cor}\label{cor:simulation-rt22-is1}
$\ind\Sigma_1$ polynomially simulates $\WKL_0 + \RT^2_2$ with respect to $\Pi_3$ sentences.
\end{cor}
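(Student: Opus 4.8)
The plan is to obtain Corollary~\ref{cor:simulation-rt22-is1} by composing the polynomial simulation of $\WKL_0 + \RT^2_2$ by $\RCA_0$ supplied by Theorem~\ref{thm:simulation-rt22-rca0} with the standard polynomial simulation of $\RCA_0$ by $\ind\Sigma_1$ on first-order sentences. First I would observe that any $\Pi_3$ sentence $\gamma$, being of the form $\fa{\bar x}{(\ex{\bar y}{\fa{\bar z}{\varphi_0}})}$ with $\varphi_0 \in \Delta_0$ and with no set quantifiers at all, is in particular a $\forall\Sigma^0_2$ sentence. Hence, given a proof $\pi$ of such a $\gamma$ in $\WKL_0 + \RT^2_2$, Theorem~\ref{thm:simulation-rt22-rca0} produces, in time polynomial in $|\pi|$, a proof $\pi'$ of $\gamma$ in $\RCA_0$.

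Since $\gamma$ is a sentence of $\Lone$, I would then feed $\pi'$ into the polynomial-time procedure translating $\RCA_0$-proofs of $\Lone$-sentences into $\ind\Sigma_1$-proofs. Such a procedure is available from \cite{phd:Ignjatovic,ci:instrumentalism}; alternatively, it can be extracted from the machinery of Section~\ref{sec:forcing} by taking the obvious interpretation $\tau$ of $\RCA_0$ in $\ind\Sigma_1$ (with sets interpreted via their $\Delta_1$ definitions and the first-order part the identity), viewing $\tau$ as a forcing interpretation through Example~\ref{eg:interpret}, noting that $s\forces_\tau\gamma$ is then $\ind\Sigma_1$-provably equivalent to $\gamma$ for every $\Lone$-sentence $\gamma$, so that $\tau$ is trivially and polynomially $\Lone$-reflecting, and applying Theorem~\ref{thm:FI>nspeedup}. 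Either way, one obtains in time polynomial in $|\pi'|$ a proof $\pi''$ of $\gamma$ in $\ind\Sigma_1$. Since the composition of two polynomial-time procedures is again polynomial-time and $|\pi'|$ is polynomial in $|\pi|$, the map $\pi \mapsto \pi''$ witnesses the claimed simulation.

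I do not expect any real obstacle here: all the substantive content lies in Theorem~\ref{thm:simulation-rt22-rca0}, and what remains is bookkeeping --- verifying the syntactic inclusion $\Pi_3 \subseteq \forall\Sigma^0_2$ and checking that the two simulations chain over the common language $\Lone$, which needs the $\RCA_0$-to-$\ind\Sigma_1$ translation to fix $\Lone$-sentences verbatim (as it does, the first-order part of the interpretation being the identity).
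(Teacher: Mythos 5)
Your argument is exactly the one the paper intends: compose Theorem~\ref{thm:simulation-rt22-rca0} with the standard polynomial simulation of $\RCA_0$ by $\ind\Sigma_1$ on $\Lone$-sentences (via \cite{phd:Ignjatovic,ci:instrumentalism} or via the trivial forcing interpretation of Example~\ref{eg:interpret} and Theorem~\ref{thm:FI>nspeedup}), after noting $\Pi_3\subseteq\forall\Sigma^0_2\cap\Lone$. The paper presents this as an immediate corollary with precisely these ingredients, and your write-up correctly fills in the routine checks.
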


Our proof of Theorem \ref{thm:simulation-rt22-rca0} requires a stronger and more explicit version
of a result in finite combinatorics used to prove the conservativity theorem of \cite{PY}. The combinatorial statement is formulated
using the concept of $\alpha$-largeness (originally introduced in \cite{KS81} as a tool to study the unprovability of the Paris--Harrington theorem),
which provides a framework for measuring the size
of finite subsets of $\IN$ by means of countable ordinals rather than just natural numbers.
We now recall some basic notions related to that framework.

For a given $\alpha<\omega^{\omega}$ and $m\in \IN$, define
$0[m]=0$, $\alpha[m]=\beta$ if $\alpha=\beta+1$,
and $\alpha[m]=\beta+\omega^{n-1}\cdot m$ if $\alpha=\beta+\omega^{n}$ for some $n\ge 1$.

\begin{defn}[$\ind\Delta_0+\exp$]
Let $\alpha<\omega^{\omega}$. A set $X = \{ x_0 < \dots < x_{\ell-1} \}\subseteq_{\fin}\IN$
is said to be \emph{$\alpha$-large} if $\alpha[x_{0}]\dots[x_{\ell-1}]=0$.
In other words, any finite set is $0$-large, and $X$ is $\alpha$-large when
\begin{itemize}
 \item $X\setminus \{\min X\}$ is $\beta$-large if $\alpha=\beta+1$,
 \item $X\setminus \{\min X\}$ is $(\beta+\omega^{n-1}\cdot\min X)$-large if $\alpha=\beta+\omega^{n}$ for $n \ge 1$.
\end{itemize}
\end{defn}

The proposition below lists some well-known and simple but important properties of $\alpha$-largeness. Their provability in $\ind\Delta_0 + \exp$ was pointed out e.g.~in \cite{KY:ramsey-combinatorics}.

\begin{prop}\label{prop:largeness} The following are provable in $\ind\Delta_0 + \exp$:
\begin{enumerate}[(i)]
\item\label{prop:largeness-monotone} If a set $X$ is $\alpha$-large and $X \subseteq Y \subseteq_\fin \IN$, then $Y$ is $\alpha$-large.
\item\label{prop:largeness-sums} If $\alpha = \omega^{n_k} + \dots + \omega^{n_0}$ where $n_k\ge \dots \ge n_0$, then a set $X$ is $\alpha$-large if and only if there exist sets $X_0,\dots,X_k$ such that $X = X_0 \sqcup \dots \sqcup X_k$, each $X_i$ is $\omega^{n_i}$-large, and for each $i < k$, $\max X_i < \min X_{i+1}$.

As a result, if $X$ is $\omega^n$-large and $k = \min X$, then $X\setminus\{k\} = X_0 \sqcup \dots \sqcup X_{k-1}$
where each $X_i$ is $\omega^{n-1}$-large,
and for each $i < k-1$, $\max X_i < \min X_{i+1}$.
\item\label{prop:largeness-split} If $X$ is $\omega^n \cdot 2$-large and $X = X_0 \cup X_1$, then at least one of $X_0, X_1$ is $\omega^n$-large.
\end{enumerate}
\end{prop}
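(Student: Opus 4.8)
The plan is to prove all three parts by induction, using the recursive characterisation of $\alpha$-largeness recalled above; equivalently, one works with the \emph{residue} $\mathrm{res}(\alpha,X)\defeq\alpha[x_0][x_1]\dots[x_{\ell-1}]$ of $X=\{x_0<\dots<x_{\ell-1}\}$, so that $X$ is $\alpha$-large exactly when $\mathrm{res}(\alpha,X)=0$. Two elementary facts about fundamental sequences will be used repeatedly: $\alpha[m]<\alpha$ for every $\alpha>0$ and $m\in\IN$, and $\alpha[m]\le\alpha[m']$ whenever $m\le m'$; both are read off the definition by inspecting the cases $\alpha=\beta+1$ and $\alpha=\beta+\omega^{n}$. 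Note, however, that $\alpha\mapsto\alpha[m]$ is \emph{not} monotone in $\alpha$ (for instance $3<\omega$ but $3[1]=2>1=\omega[1]$), which is what keeps even part~(i) from being entirely trivial. Regarding the formalisation in $\ind\Delta_0+\exp$: ordinals below $\omega^{\omega}$ are coded by their Cantor normal forms, and once $\exp$ is available the whole residue computation is bounded, so ``$X$ is $\alpha$-large'' is $\Delta_0$; all the inductions below run on numerical parameters ($|X|$, the length of a normal form, or $n$), hence are available. I will only indicate the mathematical content; the detailed verification in $\ind\Delta_0+\exp$ is carried out in \cite{KY:ramsey-combinatorics}.

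For part~(i) it is enough to treat the insertion of a single element $y\notin X$ and then iterate. If $y>\max X$ then $\mathrm{res}(\alpha,X\cup\{y\})=\mathrm{res}(\alpha,X)[y]=0$. Otherwise let $x_j$ be the least element of $X$ above $y$; peeling off $x_0,\dots,x_{j-1}$ reduces the claim to $\gamma[y][x_j]\dots[x_{\ell-1}]\le\gamma[x_j]\dots[x_{\ell-1}]$, where $\gamma$ is the residue reached so far and $y<x_j$. The plan here is to prove the lemma that $\gamma[y][a_0]\dots[a_{r-1}]\le\gamma[a_0]\dots[a_{r-1}]$ whenever $y<a_0<\dots<a_{r-1}$, by induction on the Cantor normal form of $\gamma$ (the case $\gamma=0$ trivial; otherwise splitting on whether the last normal-form term is $1$ or $\omega^{n}$ with $n\ge1$, and using $\gamma[m]<\gamma$ together with monotonicity of $\gamma[m]$ in $m$). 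This lemma is the technical core of part~(i).

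For part~(ii) I would induct on $|X|$, so that the induction hypothesis is available for every ordinal and not only for those whose normal form has a fixed length. If $x_0=\min X$ and $\alpha=\omega^{n_k}+\dots+\omega^{n_0}$, the recursive definition gives that $X\setminus\{x_0\}$ is $\alpha[x_0]$-large, where $\alpha[x_0]$ is $\omega^{n_k}+\dots+\omega^{n_1}$ if $n_0=0$ and $\omega^{n_k}+\dots+\omega^{n_1}+\omega^{n_0-1}\cdot x_0$ if $n_0\ge1$; applying the induction hypothesis to $X\setminus\{x_0\}$ and regrouping the blocks gives both directions, with the direction from block decompositions back to $\alpha$-largeness additionally invoking part~(i) in the case $n_0=0$. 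The ``as a result'' clause is just the instance $\alpha=\omega^{n}$: the recursive definition directly yields that $X\setminus\{\min X\}$ is $\omega^{n-1}\cdot\min X$-large, and the main statement then applies to it since $\omega^{n-1}\cdot\min X=\omega^{n-1}+\dots+\omega^{n-1}$ has all normal-form exponents equal to $n-1$.

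For part~(iii) one may assume $X_0\cap X_1=\empset$ (put the overlap on one side; this only shrinks the classes, so by part~(i) nothing is lost), and then induct on $n$. The base case $n=0$ is immediate: $\omega^0\cdot2$-largeness of $X$ says $|X|\ge2$, so one of $X_0,X_1$ is nonempty, i.e.\ $1$-large. For the step, the plan is to peel $\min X$ from an $\omega^{n+1}\cdot2$-large $X$ and use part~(ii) to split what remains into a long run of consecutive $\omega^{n}$-large blocks (together with one $\omega^{n+1}$-large block, itself split the same way); pairing up consecutive blocks and applying the induction hypothesis for $n$ produces consecutive monochromatic $\omega^{n}$-large sets, and a pigeonhole argument then gathers enough of a single colour to assemble an $\omega^{n+1}$-large monochromatic set. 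I expect this inductive step to be the main obstacle: because $\alpha\mapsto\alpha[m]$ is not monotone in $\alpha$, ``$\omega^{n+1}$-large'' does not in itself entail ``$\omega^{n}\cdot2$-large'', so the block structure has to be tracked quantitatively, and it is this combinatorics — done carefully in \cite{KY:ramsey-combinatorics} — that does the real work.
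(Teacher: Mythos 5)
The paper itself offers no proof of this proposition; it declares the facts ``well-known and simple'' and cites \cite{KY:ramsey-combinatorics} for their provability in $\ind\Delta_0+\exp$. Your sketches for (i) and (ii) follow the natural route and look sound: for (i) the substance is indeed the technical lemma that prepending a smaller element cannot increase the residue, proved by ordinal induction, and for (ii) the induction on $|X|$ with (i) invoked to absorb spare elements in the $n_0=0$ case is right. The trouble is in (iii), where the pair-up-and-pigeonhole plan does not close, already in the step from $n=0$ to $n=1$. Write an $\omega\cdot 2$-large $X$ as $A\sqcup B$ with $A,B$ consecutive $\omega$-large sets, $a_0=\min A$, $b_0=\min B$. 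Peeling the minima produces about $a_0+b_0$ blocks; pairing them gives roughly $(a_0+b_0)/2$ pairs, each yielding one coloured $\omega^0$-large block, and the pigeonhole leaves roughly $(a_0+b_0)/4$ blocks of one colour. To anchor an $\omega$-large set at $a_0$ you would then need at least $a_0$ such blocks, i.e.\ $b_0$ about $3a_0$ or more, but the structure only guarantees $b_0>\max A\ge 2a_0$. Worse, if the majority colour is not that of $a_0$, your construction offers no small element of that colour to serve as the minimum of the target set: the first majority-colour block can sit arbitrarily deep in $X$, and there is no compensating bound on how many blocks remain behind it.

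The gap is therefore not a matter of tightening constants but of the shape of the argument: (iii) is better proved without a pigeonhole at all. For $n=1$, say, one argues directly: assume $X_0\sqcup X_1=X$ with $a_0\in X_0$; if $X_0$ is not $\omega$-large then $|X_0|\le a_0<|A|$, so some element of $A$ lies in $X_1$, forcing $\min X_1\le\max A<b_0$; if $X_1$ is also not $\omega$-large then $|X_1|\le\min X_1\le\max A$, whence $|X|=|X_0|+|X_1|\le a_0+\max A<a_0+b_0<|X|$, a contradiction. A counting argument of this shape, tracking the block decomposition from (ii) exactly rather than sampling it by pairs, is what extends to general $n$. One last remark: you attribute the difficulty of (iii) to ``combinatorics done carefully in \cite{KY:ramsey-combinatorics},'' but the hard combinatorics in that paper is the $\omega^{300x}$ bound quoted here as Theorem~\ref{thm:RT22-largeness-main}; these closure properties are meant to be the easy preliminaries.
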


The following result is crucial in the proof of the main theorem of \cite{PY}.

\begin{prop}\cite[Proposition 7.7]{PY}\label{prop:PY-largeness}
For every natural number $n$ there exists a natural number $m$ such that $\ind\Sigma_1$ proves: for every $X\subseteq_{\fin}\IN$ with $\min X \ge 3$, if $X$ is $\omega^m$-large, then every colouring $f \colon [X]^2 \to 2$ has an $\omega^n$-large homogeneous set.
\end{prop}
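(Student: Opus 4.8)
The plan is to prove this by the Erd\H{o}s--Rado tree argument (iterated pigeonhole), carried out inside $\ind\Sigma_1$ with $\alpha$-largeness tracked via Proposition~\ref{prop:largeness}. Call a set $H\subseteq_{\fin}\IN$ \emph{min-homogeneous} for $f$ if $f(a,b)=f(a,c)$ whenever $a<b<c$ all lie in $H$. I would argue in two phases: first extract a large min-homogeneous set, then turn it into a homogeneous one with a single use of the pigeonhole principle.

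The second phase is quick. Given an $\omega^n\cdot 2$-large set $H$ that is min-homogeneous for $f$, the colour $g(a)\defeq f(a,b)$ does not depend on which $b>a$ in $H$ one picks, so $g$ is a $2$-colouring of $\{a\in H:a<\max H\}$; splitting $H$ into the two colour classes of $g$ (putting $\max H$ into either one, as it carries no constraint), part~(iii) of Proposition~\ref{prop:largeness} yields an $\omega^n$-large class, and any such class is $f$-homogeneous. So it is enough to find, for a suitable $m$, an $\omega^{n+1}$-large (in fact $\omega^{n+1}\cdot 2$-large, which costs only a little more) min-homogeneous subset inside any $\omega^m$-large $X$ with $\min X\ge 3$.

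For the first phase I would build a decreasing chain $X=Y_0\supseteq Y_1\supseteq\cdots$ together with $a_i\defeq\min Y_i$: given $Y_i$ with $\min Y_i\ge 3$, the set $Y_i\setminus\{a_i\}$ is $\omega^{k}\cdot a_i$-large for the appropriate exponent $k$, hence $\omega^{k}\cdot 2$-large since $a_i\ge 3$ (Proposition~\ref{prop:largeness}), so by part~(iii) one of the two colour classes of $b\mapsto f(a_i,b)$ retains a controlled amount of largeness; let $Y_{i+1}$ be that class. Then $\{a_0,a_1,\dots\}$ is automatically min-homogeneous. The combinatorial heart of the proposition is to arrange the bookkeeping so that $\{a_0,a_1,\dots\}$ comes out $\omega^{n+1}$-large. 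The na\"ive version, where the reservoir's exponent drops by~$1$ at each step, is useless: it produces a min-homogeneous set with only about $m$ elements, and $\alpha$-largeness is sensitive to the \emph{values} of the elements and not merely to their number, so a short set of small numbers is never $\omega^{n+1}$-large. Instead one has to peel many minima while holding the exponent fixed --- using part~(ii) of Proposition~\ref{prop:largeness} to convert each unit of the leading coefficient into one consecutive $\omega^{k-1}$-large block of the eventual min-homogeneous set --- and a meta-level induction on~$n$ then reads off a fixed $m$ for which the cumulative loss from the repeated binary splits is absorbed. Formalising this in $\ind\Sigma_1$ is otherwise routine: the construction is finite of length at most $|X|$, the largeness predicates are $\Delta_0$ relative to the exponential, and $\Sigma_1$-induction suffices to set up and verify the recursion.

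The hard part, then, is exactly this largeness bookkeeping in the first phase: making sure the extracted min-homogeneous set is genuinely $\omega^{n+1}$-large rather than just long, with a value of $m$ that is a fixed function of $n$. For the present proposition one is free to be wasteful --- any finite $m$ will do --- whereas the sharper, more constructive bound of \cite{KY:ramsey-combinatorics} (needed for the main theorem) is what pins down $m$ economically.
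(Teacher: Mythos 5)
The paper does not actually prove this proposition: it is imported verbatim from \cite{PY} and the text immediately moves on to the sharper, explicitly bounded version from \cite{KY:ramsey-combinatorics}. So there is no in-paper proof to compare your proposal against, only the cited sources. Taking your argument on its own terms: the overall Erd\H{o}s--Rado plan (extract a large min-homogeneous set, then one pigeonhole application) is indeed the framework used in both \cite{PY} and \cite{KY:ramsey-combinatorics}, and your Phase~2 is correct. But Phase~1 has a genuine gap, and it is slightly worse than you let on. The naive peel-one-minimum-per-step construction fails not only because the extracted set has too few elements: the successive minima $a_{i+1}=\min Y_{i+1}$ are completely uncontrolled in \emph{value} (after a single colour split, the surviving colour class may have its minimum arbitrarily far out in $X$), so $\{a_0,a_1,\dots\}$ need not even be $\omega$-large, regardless of how many steps one takes.

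Your proposed remedy --- ``peel many minima while holding the exponent fixed,'' using Proposition~\ref{prop:largeness}\ref{prop:largeness-sums} to turn units of the leading coefficient into consecutive $\omega^{k-1}$-large blocks of the eventual min-homogeneous set --- is exactly where the content of the proposition lives, and as stated it does not constitute an argument. Gluing several $\omega^{k-1}$-large blocks into one min-homogeneous set requires every element of every block to have a \emph{single} colour towards everything to its right, including across block boundaries; neither Proposition~\ref{prop:largeness}\ref{prop:largeness-sums} nor \ref{prop:largeness}\ref{prop:largeness-split} gives any purchase on this, and the promised ``meta-level induction on $n$'' has no inductive step until that compatibility is secured. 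Securing it is precisely the grouping machinery that \cite{PY} and, more carefully, \cite{KY:ramsey-combinatorics} develop, and it is the nontrivial combinatorial heart that you correctly point at but do not supply. In short: right skeleton, central lemma missing.
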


As the analysis in the remainder of this section will reveal, proving Theorem \ref{thm:simulation-rt22-rca0} involves obtaining an upper bound on $m$ in terms of $n$. The statement of Proposition \ref{prop:PY-largeness} gives no such upper bound, and any bound that could be derived from a simple-minded analysis of the proof in \cite{PY} would be very weak --- certainly not even elementary recursive. On the other hand, proving Theorem~\ref{thm:simulation-rt22-rca0} requires a bound significantly better than exponential. We will use the bound obtained in \cite{KY:ramsey-combinatorics}:

\begin{thm}\cite[Theorem 1.6]{KY:ramsey-combinatorics}\label{thm:RT22-largeness-main}
Provably in $\ind\Sigma_1$, if $X\subseteq_{\fin}\IN$ is $\omega^{300x}$-large, then every colouring $f \colon [X]^2 \to 2$ has an $\omega^x$-large homogeneous set.
\end{thm}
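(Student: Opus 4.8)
The plan is to formalise the classical Erd\H{o}s--Rado argument for $\RT^2_2$ inside the $\alpha$-largeness calculus of Proposition~\ref{prop:largeness}, organised so that every ``level'' of the construction costs only a bounded number of ordinal exponents; the constant $300$ in Theorem~\ref{thm:RT22-largeness-main} is then merely a convenient upper bound on the total, with generous slack. Following the well-known route (going back to Ketonen--Solovay~\cite{KS81}), I would factor the argument through \emph{min-homogeneous} sets --- a set $Y\subseteq_{\fin}\IN$ is min-homogeneous for $f\colon[Y]^2\to 2$ if $f(\{a,b\})$ depends only on $\min\{a,b\}$ --- splitting the proof into: (A) a ``largeness calculus'' package provable in $\ind\Delta_0+\exp$; (B) extraction of a large min-homogeneous subset; and (C) a cheap passage from min-homogeneous to genuinely homogeneous. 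All of this is meant to take place in $\ind\Sigma_1$, with the induction in (B) on $x$ itself: the statement ``$X$ is $\omega^{300x}$-large $\then$ some $Y\subseteq X$ is $\omega^x$-large and $f$-homogeneous'' is of the form $\Delta_0(\exp)\then\ex{Y}{\Delta_0(\exp)}$ --- $\alpha$-largeness and homogeneity being decidable by elementary computations on Cantor normal forms, and $Y$ ranging over codes bounded by the code of $X$ --- hence is $\Sigma_1$, and $\ind\Sigma_1$ proves $\exp$, so all the elementary manipulations are legitimately available. A trivial preliminary observation lets us assume $\min X$ is as large as we like: if $\min X$ is small the statement is easy or degenerate (indeed $\{0\}$ is $\omega^x$-large), and in any case $\omega^k$-largeness is only well behaved --- e.g.\ monotone in $k$ --- once the minimum dominates the relevant coefficients.

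For (C): if $Y$ is $\omega^{x+1}$-large and min-homogeneous for $f$, colour each non-maximal $a\in Y$ by the common value of $f(\{a,b\})$ over $b\in Y$ with $b>a$; this two-colours $Y$, so by Proposition~\ref{prop:largeness}\eqref{prop:largeness-monotone} and~\eqref{prop:largeness-split} one colour class contains an $\omega^{x}$-large $Z\subseteq Y$, and min-homogeneity of $Y$ makes $Z$ genuinely $f$-homogeneous. For (A) I would prove in $\ind\Delta_0+\exp$, by chopping sets into consecutive blocks via Proposition~\ref{prop:largeness}\eqref{prop:largeness-sums} and applying~\eqref{prop:largeness-split} blockwise, lemmas of the following kind: a $\beta\cdot r$-large set that has been $r$-coloured has a monochromatic $\beta$-large subset; deleting $\min X$ from an $\omega^{n}$-large set leaves an $\omega^{n-1}\cdot\min X$-large set, which decomposes into $\min X$ consecutive $\omega^{n-1}$-large blocks; and $\alpha$-largeness is preserved on passing to a smaller ordinal and to a superset, once the minimum is large enough. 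Each such lemma costs at most one or two ordinal exponents, which is exactly what keeps a single Erd\H{o}s--Rado step cheap.

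The main work, and the step I expect to be the main obstacle, is (B): proving in $\ind\Sigma_1$ that for a suitable constant $c$ --- comfortably below $300$, to absorb (A), (C), and the side condition $\min X\geq 3$ of Proposition~\ref{prop:PY-largeness}, all of which can be dispatched separately --- every $\omega^{cx}$-large $X$ has an $\omega^{x+1}$-large min-homogeneous subset. I would argue by recursion on $x$. In the step, delete $\min X$, writing the remainder as $\min X$ consecutive $\omega^{cx-1}$-large blocks; inside (a large sub-block of) each of these, invoke the recursion hypothesis to produce an $\omega^{x-1}$-large min-homogeneous ``piece'', and glue the pieces together with $\min X$ into an $\omega^{x+1}$-large whole. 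The delicate point is that independent recursion into the blocks does not preserve min-homogeneity \emph{across} blocks: before recursing into the $i$-th block one must first refine it to a sub-block on which every element of the part built so far sees a single colour (a ``colour-profile'' refinement), and one must pigeonhole at the end over the finitely many profiles and over the colour that $\min X$ receives. This is affordable precisely because an $\omega^{cx-1}$-large set is astronomically larger --- in the fast-growing hierarchy attached to $\alpha$-largeness --- than what the recursion asks of it; but making all the resulting ordinal costs add up to a \emph{linear} bound requires the recursion to be set up so that each level consumes only a fixed number of exponents rather than a multiplicative factor. The secondary obstacle is purely proof-theoretic: one must keep every set, colouring, and ordinal $\Delta_0(\exp)$-definable, so that the recursion unwinds as a single $\ind\Sigma_1$ proof rather than merely a metatheoretic induction over the standard $x$. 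Once (A), (B), and (C) are in place, Theorem~\ref{thm:RT22-largeness-main} follows by composing them, with the exponents summing to at most $300x$; this bound is exactly what is fed into the forcing interpretation underlying Theorem~\ref{thm:simulation-rt22-rca0}.
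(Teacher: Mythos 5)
Note first that the paper does not contain a proof of Theorem~\ref{thm:RT22-largeness-main}: it is cited verbatim from \cite{KY:ramsey-combinatorics}, and the surrounding discussion makes a point of saying that obtaining this ``considerable strengthening of that bound with a more constructive proof'' was precisely the task of that other paper. Here the result is used only as a black box. So there is no proof in the present paper to measure your sketch against, and I can only assess it as a free-standing attempt.

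Your decomposition into (A), (B), (C) follows the classical Ketonen--Solovay route via min-homogeneous sets, and (A) and (C) are fine. But (B), which you yourself identify as the hard part, has a gap that is not a matter of tightening constants. In your inductive step you delete $\min X$, split the remainder into $\min X$ consecutive $\omega^{cx-1}$-large blocks, and before recursing into block $i$ you must refine it so that \emph{every element already placed} --- i.e.\ every element of the pieces produced from blocks $1,\dots,i-1$ --- sees a single colour over it. Each such refinement is one application of Proposition~\ref{prop:largeness}\ref{prop:largeness-split}, and hence spends one $\omega$-exponent of the block. By construction, however, the already-placed elements form a union of $\omega$-power-large pieces, so they number on the order of the fast-growing hierarchy at $\omega^{x-1}$, not linearly or even elementarily many in $x$. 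The number of colour-profile refinements you need before you can touch block $i$ therefore dwarfs the $O(x)$ spare $\omega$-exponents that the block actually carries. Your reassurance that ``an $\omega^{cx-1}$-large set is astronomically larger\ldots\ than what the recursion asks of it'' conflates cardinality with largeness rank: the block has enormous cardinality, but it possesses only $cx-1$ spare $\omega$-exponents, and it is the latter currency in which each refinement is paid. There is a secondary problem too: a colour-profile refinement is not a free halving, because compatibility with the colours already committed in earlier pieces forces you to keep a \emph{specific} colour class rather than the larger one, so Proposition~\ref{prop:largeness}\ref{prop:largeness-split} alone does not guarantee the surviving sub-block remains large. You correctly diagnose that each level of the recursion must pay only a fixed number of exponents rather than a multiplicative factor, but the plan as you set it up does not achieve this, and the grouping device that makes it true is exactly the nontrivial content of \cite{KY:ramsey-combinatorics} that this paper chose to import rather than reprove.
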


\begin{remark}
Note that in \cite{KY:ramsey-combinatorics}, the theorem includes the additional assumption that $\min X \ge 3$. However, using the proof presented in \cite{KY:ramsey-combinatorics} and some very straightforward combinatorics of $\alpha$-large sets, it is easy to verify that the assumption is in fact unnecessary. To avoid annoying technicalities, we nevertheless assume that any set $X$ mentioned in the context of $\alpha$-largeness satisfies
$\min X \ge 1$.
\end{remark}

Our proof of Theorem \ref{thm:simulation-rt22-rca0} will involve an argument by cases,
in which the case distinction is based on how much induction is available.
We introduce an auxiliary theory, originally motivated by the case in which a certain amount of induction \emph{fails},
which turns out to be useful in the other case as well. The theories and polynomial simulation relationships
we consider in the proof are summarized in Figure 1.

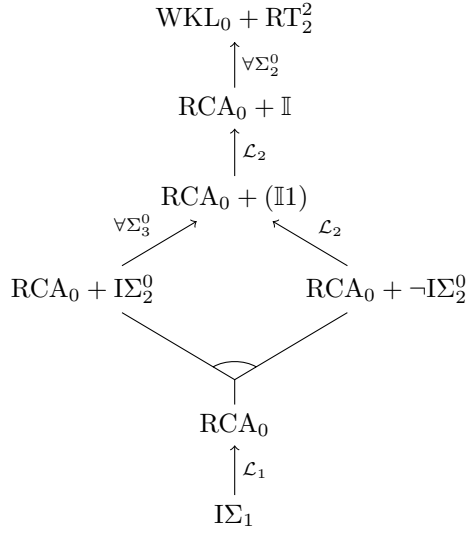
\begin{figure}[tph]\centering
\begin{tikzpicture}[xscale=2,yscale=1.2]
\node(IS1)    at ( 0,0)   {$\ind\Sigma_1$};
\node(RCA0)   at ( 0,1)   {$\RCA_0$};
\node[minimum width=8em](RCA0+)
              at (-1,2.5) {$\RCA_0+\ind\Sigma^0_2$};
\node[minimum width=8em](RCA0-)
              at ( 1,2.5) {$\RCA_0+\neg\ind\Sigma^0_2$};
\node(RCA0I1) at ( 0,3.5) {$\RCA_0+(\I1)$};
\node(RCA0I)  at ( 0,4.5) {$\RCA_0+\I$};
\node(RT22)   at ( 0,5.5) {$\WKL_0+\RT^2_2$};
\begin{scope}[->,node font=\scriptsize]
\draw(IS1)   --node[right]{$\Lone$}                  (RCA0);
\draw(RCA0+) --node[above left]{$\forall\Sigma^0_3$} (RCA0I1);
\draw(RCA0-) --node[above right]{$\Ltwo$}            (RCA0I1);
\draw(RCA0I1)--node[right]{$\Ltwo$}                  (RCA0I);
\draw(RCA0I) --node[right]{$\forall\Sigma^0_2$}      (RT22);
\end{scope}
\draw(RCA0)--++(0,.5) coordinate(pivot)
     (pivot)--(RCA0+)
     (pivot)--(RCA0-)
     (pivot)+(45:.2) arc [radius=.2, start angle=45, end angle=135];
\end{tikzpicture}
\caption{Polynomial simulations between the various theories in Section~\ref{sec:ramsey}}
\end{figure}

\begin{defn}\label{defn:cut}
$\RCA_0 + \I$ is a theory in the language of second-order arithmetic extended by
a new unary predicate $\I$ over the first-order sort.

The axioms of $\RCA_0 + \I$ are those of $\RCA_0 $ plus the following statements:
\begin{enumerate}[($\I$1)]
  \item $\I$ is a nonempty proper cut in the first-order universe,
  \item $\I$ is closed under addition,
  \item for every infinite set $S$, there exists finite $X \subseteq S$ which is $\omega^x$-large for some $x > \I$.
\end{enumerate}
\end{defn}

For our purposes, the most important property of the theory $\RCA_0 + \I$ will be that it
polynomially simulates $\WKL_0 + \RT^2_2$ with respect to proofs of $\forall\Sigma^0_2$ sentences.
Before proving that fact, which is Lemma \ref{lem:simulation-rt22-I} below,
we verify some more basic properties of $\RCA_0 + \I$.

\begin{lem}\label{lem:simulation-I-I1}
$\RCA_0 + (\I1)$ polynomially simulates $\RCA_0 + \I$ with respect to $\Ltwo$ sentences.
\end{lem}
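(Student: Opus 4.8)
The plan is to exhibit a forcing interpretation $\tau$ of $\RCA_0 + \I$ in $\RCA_0 + (\I1)$ of the most degenerate possible kind: by Example~\ref{eg:interpret}, $\tau$ should come from an ordinary interpretation that is the identity on the entire second-order-arithmetic structure and merely reinterprets the new predicate. Concretely, $\tau$ interprets $\I$ by the formula defining the cut $\I^{*} := (J \cap \I)^{-}$, where $J$ is the cut $\{\,y : \forall S\,(S \text{ infinite} \to \forall z\le y\,(S \text{ has an }\omega^{z+1}\text{-large finite subset}))\,\}$ and, for a cut $C$, $C^{-} := \{\,a : \forall b\in C\,(a+b\in C)\,\}$ is the standard largest sub-cut of $C$ closed under addition. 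Since $\tau$ acts as the identity on $\Ltwo$, the assertion that an $\Ltwo$-sentence $\gamma$ is forced is provably equivalent to $\gamma$ itself, so the reflection requirement of Definition~\ref{def:reflects} is met trivially and in polynomial time; and since $\tau$ leaves the axioms of $\RCA_0$ essentially unchanged and there are only finitely many further axioms, $\tau$ is polynomial. So by Theorem~\ref{thm:FI>nspeedup} it suffices to verify that $\tau$ is a forcing interpretation, which --- the structural conditions and the $\RCA_0$-axioms being automatic --- comes down to showing that $\RCA_0 + (\I1)$ proves $(\I1)$, $(\I2)$ and $(\I3)$ with $\I$ replaced throughout by $\I^{*}$.

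The verification of $(\I1)$ and $(\I2)$ should be routine cut combinatorics. First I would confirm that $J$ is a cut: downward closure and $0\in J$ are immediate from the definition, and the only substance is successor closure, which is the familiar fact that one can climb an exponent level --- given $y\in J$ and an infinite $T$ with $m=\min T$, repeatedly applying "$y\in J$" to the infinite tails of $T$ produces, by $\Sigma^0_1$-induction on the at most $m$ blocks generated, a partition into increasing $\omega^{y+1}$-large sets lying above $m$, whose union together with $\{m\}$ is $\omega^{y+2}$-large by Proposition~\ref{prop:largeness}. Then $J\cap\I$ is a cut, $(\cdot)^{-}$ turns it into a sub-cut closed under addition by the usual three-line argument, and $\I^{*}\subseteq J\cap\I\subseteq\I$ is a proper cut because $\I$ is; this gives $(\I1)$ and $(\I2)$.

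The heart of the argument is $(\I3)$ for $\I^{*}$: every infinite $S$ must have an $\omega^{x}$-large finite subset with $x>\I^{*}$. I would fix $S=\{s_0<s_1<\dots\}$, put $X_\ell=\{s_0,\dots,s_{\ell-1}\}$ and $\lambda(\ell)=\max\{x : X_\ell \text{ is }\omega^{x}\text{-large}\}$ --- well defined since a set of size $\ell$ cannot be $\omega^{x}$-large for $x>\ell$, and non-decreasing in $\ell$ by Proposition~\ref{prop:largeness} --- and then split on whether $\lambda$ is bounded. If $\lambda$ is bounded, let $c^{*}=\max_\ell\lambda(\ell)$; then some $X_\ell$ is $\omega^{c^{*}}$-large while no $X_\ell$, hence (again by Proposition~\ref{prop:largeness}) no finite subset of $S$ at all, is $\omega^{c^{*}+1}$-large, so $c^{*}\notin J$ and therefore $c^{*}\notin\I^{*}$, and $X_\ell$ witnesses $(\I3)$ with $x=c^{*}$. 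If instead $\lambda$ is unbounded, then $S$ has $\omega^{x}$-large finite subsets for every $x$, so any $x$ outside the proper cut $\I^{*}$ will do; this is the only point where $(\I1)$ is genuinely used, since it is precisely the intersection with $\I$ that prevents $\I^{*}$ from possibly being the whole universe.

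The step I expect to be the real obstacle is pinning down $(\I3)$: one has to see that $J$ is the correct cut to work with --- it is a genuine cut provably in $\RCA_0$, and it provably omits the supremum $c^{*}$ of the largeness levels realised inside any single infinite set --- and that intersecting with $\I$ is exactly what repairs the case $J=M$. Everything else is bookkeeping about forcing interpretations from Section~\ref{sec:forcing} together with the elementary $\alpha$-largeness combinatorics recalled in Proposition~\ref{prop:largeness}, which being provable already in $\ind\Delta_0+\exp$ is available at the needed low syntactic complexity; the one small thing to watch is that "$X$ is $\omega^{x}$-large" and the auxiliary functions $\lambda$ and $(\cdot)^{-}$ are all arithmetical, so that the $\Sigma^0_1$-induction used in the climbing step is legitimate.
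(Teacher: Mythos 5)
Your proposal is correct and takes essentially the same route as the paper: the same auxiliary cut $J$ of uniform largeness-achievability levels, intersected with $\I$ and then shortened by Solovay's trick, presented as an interpretation that is the identity on $\Ltwo$ and merely reinterprets $\I$. The only divergence is the verification of $(\I3)$: where you introduce the level function $\lambda$ and split on whether it is bounded, invoking $\mathrm{L}\Pi^0_1$ to extract $c^*$, the paper gets the same conclusion more directly via $\Sigma^0_1$-overspill, observing that the $\Sigma^0_1$ formula ``$S$ has an $\omega^x$-large finite subset'' cannot, in the presence of $\ind\Sigma^0_1$, define exactly the proper cut $\I\cap J$ --- but these two arguments are just two unfoldings of the same use of $\Sigma^0_1$-induction.
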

\begin{proof}
It is enough to show that there is an interpretation of $\RCA_0 + \I$ in $\RCA_0 + (\I1)$
that is the identity interpretation with respect to all symbols of $\Ltwo$.

So, let $\mathbb{J}$ consist of those numbers $x$ for which every infinite set contains an $\omega^x$-large finite subset;
this is easily seen to be a definable cut in $\RCA_0$.

Thus, $\I \cap \mathbb{J}$ is, provably in $\RCA_0 + (\I1)$, a proper definable cut
with the property that every infinite set contains an $\omega^x$-large subset for each $x \in \I \cap \mathbb{J}$.
It follows that every infinite set $X$ actually contains an $\omega^x$-large subset for some $x > \I \cap \mathbb{J}$,
because the formula ``$X$ contains an $\omega^x$-large subset'' is $\Sigma^0_1$,
so by $\ind\Sigma^0_1$ it cannot define exactly the cut $\mathbb{I} \cap \mathbb{J}$.
Let $\mathbb{K}$ be a cut closed under addition
obtained from $\I \cap \mathbb{J}$ by applying Solovay's technique of shortening cuts:
in other words, put $\mathbb{K}=\{a\in\mathbb{I}\cap\mathbb{J}: \forall x\in\mathbb{I}\cap\mathbb{J}(a+x\in \mathbb{I}\cap\mathbb{J})\}$.
Then $\RCA_0 + (\I1)$ proves the axioms of $\RCA_0 + \I$ with $\mathbb{K}$ substituted for $\I$.
\end{proof}

\begin{lem}\label{lem:case-distinction}
$\RCA_0 + (\I1)$ is polynomially simulated by:
\begin{enumerate}[(a)]
\item $\RCA_0 + \ind\Sigma^0_2$ with respect to $\forall\Sigma^0_3$ sentences,\label{part:case-distinction/+}
\item $\RCA_0 + \neg\ind\Sigma^0_2$ with respect to $\Ltwo$ sentences.\label{part:case-distinction/-}
\end{enumerate}
\end{lem}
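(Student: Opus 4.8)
The statement to prove is that $\RCA_0 + (\I1)$ is polynomially simulated by $\RCA_0 + \ind\Sigma^0_2$ with respect to $\forall\Sigma^0_3$ sentences, and by $\RCA_0 + \neg\ind\Sigma^0_2$ with respect to $\Ltwo$ sentences. The natural strategy is to exhibit, in each case, a (polynomial) interpretation or forcing interpretation in the sense of Section~\ref{sec:forcing} and then apply Theorem~\ref{thm:FI>nspeedup}, checking that the relevant class of sentences is polynomially reflected.

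For part~\eqref{part:case-distinction/-}, the interpretation should be essentially trivial. If $\neg\ind\Sigma^0_2$ holds, then there is a $\Sigma^0_2$ formula defining a nonempty proper cut $\I$ in the first-order universe; one then shortens this cut by Solovay's technique (as in the proof of Lemma~\ref{lem:simulation-I-I1}) to make it closed under addition while remaining nonempty and proper, and takes this definable cut as the interpretation of the predicate $\I$, leaving all $\Ltwo$ symbols fixed. This is an identity interpretation on $\Ltwo$, so by Example~\ref{eg:interpret} it induces a forcing interpretation that trivially reflects every $\Ltwo$ sentence; the only work is the routine verification that ($\I 1$) and ($\I 2$) hold for the shortened cut, both provably and constructibly in polynomial time (the proof is uniform, not depending on the input sentence, so polynomiality is automatic). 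Note ($\I 3$) is not among the axioms of $\RCA_0 + (\I1)$, so nothing more is needed here.

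For part~\eqref{part:case-distinction/+}, the situation is genuinely different and I expect this to be the main obstacle. When $\ind\Sigma^0_2$ holds there is no definable proper cut available, so no traditional interpretation can work; we need a forcing interpretation in which $\I$ becomes a \emph{generic cut}. The plan is to force with conditions that are pairs (or codes) specifying a finite approximation to $\I$ — say, an interval $[0,a]$ declared to be inside $\I$ together with a bound $b$ declared outside — ordered by refinement, with the generic cut being the union of the ``inside'' parts. One must define the forcing relation on atomic formulas so that first- and second-order objects of the ground model are named directly (the model is not extended, only the new predicate $\I$ is added), verify clauses (FI0)--(FI15), and crucially verify (FI16): that each axiom of $\RCA_0 + (\I1)$ — in particular ($\I 1$) that $\I$ is a nonempty proper cut closed under $\leq$, and ($\I 2$) closure under addition — is forced. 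Closure under addition of a generic cut is the delicate point, and it is exactly here that $\Sigma^0_2$ collection (equivalently the relevant amount of induction) will be used to show density of the conditions that keep the approximations closed under the pairing of two elements already inside. Since $\RCA_0 + (\I1)$ is finitely axiomatized over $\RCA_0$ (it adds only finitely many new axioms, and $\RCA_0$ itself is interpreted trivially), the forcing interpretation is automatically polynomial by the remark following Definition~\ref{def:reflects}; what remains is to check that it is polynomially $\forall\Sigma^0_3$-reflecting. For a $\forall\Sigma^0_3$ sentence $\gamma = \forall \bar x\, \exists \bar y\, \forall \bar z\, \psi$ with $\psi \in \Sigma^0_1$, one argues by the standard generic-reflection bookkeeping: the leading universal quantifiers are absolute, an $\exists$ forced at a condition gives an actual witness (the forcing is ``good'' in Avigad's sense), and a forced $\forall\Sigma^0_1$ statement yields the true statement because $\Sigma^0_1$ facts about ground-model objects are decided by forcing in a way that matches truth, this last step being uniform in $\gamma$ and hence polynomial-time constructible. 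The hard part throughout is getting the forcing relation and the density arguments for ($\I 2$) right so that everything is provable with the modest resources of $\RCA_0 + \ind\Sigma^0_2$; the $\forall\Sigma^0_3$ bound on the reflected class (rather than all of $\Ltwo$) is dictated precisely by how much the generic cut construction can ``see'' — a point the surrounding text flags when it records the conservativity direction of this simulation in Figure~1.
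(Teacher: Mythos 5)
Your proposal for part~\eqref{part:case-distinction/-} is essentially on the right track, though with two small inaccuracies. First, the paper interprets $\I$ as \emph{the intersection of all $\Sigma^0_2$-definable cuts}, which is a canonical definable proper cut in $\RCA_0+\neg\ind\Sigma^0_2$; your phrase ``there is a $\Sigma^0_2$ formula defining a nonempty proper cut'' is not in itself a formula to plug into an interpretation, and you would need to fix that. Second, $\RCA_0+(\I1)$ contains \emph{only} ($\I1$), not ($\I2$), so the Solovay shortening step (which is what Lemma~\ref{lem:simulation-I-I1} does, one step earlier in the chain) is unnecessary here.

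For part~\eqref{part:case-distinction/+}, your approach is both different from the paper's and, as sketched, does not work. You propose to keep the ground model fixed and force a generic cut using conditions that are pairs $(a,b)$ with $[0,a]$ declared inside $\I$ and everything $\geq b$ outside. The problem is that no definable refinement of this poset in $\RCA_0+\ind\Sigma^0_2$ makes every condition force ($\I1$). Concretely, below any condition there is an extension of the form $(a',a'+1)$ (or, if you impose $b\geq a+2$, of the form $(a',a'+2)$, which is then maximal and leaves $\I(a'+1)$ undecided); from such a condition $\I(a')$ is forced but no further extension can force $\I(a'+1)$, so the generic cut is not closed under successor. Any attempt to maintain a ``budget'' on $b-a$, as is done in Section~\ref{sec:ramsey} with $\omega^x$-largeness for $x>\I$, requires a cut $\I$ to already be available — which is exactly what is missing in $\RCA_0+\ind\Sigma^0_2$.

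The paper's actual argument for~\eqref{part:case-distinction/+} is not a forcing interpretation at all: it is an ordinary interpretation obtained via the Low Arithmetized Completeness Theorem. Given a countermodel $(M,\X)$ of $\RCA_0+\ind\Sigma^0_2$ for the $\forall\Sigma^0_3$ sentence, one uses the fact (from Beklemishev) that $\ind\Sigma_2$ proves uniform $\Pi_4$-reflection for $\ind\Sigma_1$, hence $\Con(\ind\Sigma^0_1+\delta(P))$ for the relevant $\Sigma^0_4$ formula $\delta$; arithmetized completeness with a low branch then gives a definable end-extension $M'\eextneq M$ with a set $A'$ such that $(M',A')\models\ind\Sigma^0_1+\delta(P)$, and one takes $\I$ to be (the range of the inclusion of) $M$ inside $M'$. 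The interpretation changes the first-order universe, the second-order part, \emph{and} adds $\I$; it is not the identity on $\Ltwo$. This also shows that your explanation of the $\forall\Sigma^0_3$ bound is off: in a forcing interpretation that left the ground model fixed (were one to exist), every $\Ltwo$ sentence would automatically be reflected; the actual source of the $\forall\Sigma^0_3$ restriction is the strength of the reflection principle available ($\Pi_4$-reflection for $\ind\Sigma_1$ over $\ind\Sigma_2$), which is what bounds the complexity of $\delta$ that can be handled.
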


\begin{proof}
The proof of~\ref{part:case-distinction/-} is essentially immediate: by interpreting $\I$ as the intersection of all $\Sigma^0_2$-definable
cuts, which is a definable proper cut in $\RCA_0 + \neg\ind\Sigma^0_2$, we obtain an
interpretation of $\RCA_0 + (\I1)$ in $\RCA_0 + \neg\ind\Sigma^0_2$
that is the identity with respect to $\Ltwo$.

In the case 
of~\ref{part:case-distinction/+},
let us first explain the underlying model-theoretic argument,
which shows how to construct a model of $\RCA_0+(\I1)$
from a model of $\RCA_0+\ind\Sigma^0_2$
while preserving the truth of a fixed $\exists\Pi^0_3$~sentence.
We then give some hints as to how this is reflected on the syntactic level.

Let $(M,\X)\models\RCA_0+\ind\Sigma^0_2+\ex X{\delta(X)}$,
where $\delta\in\Sigma^0_4$. Take $A\in\X$ such that $(M,\X)\models\delta(A)$.
From Beklemishev~\cite[Theorem~5.1]{art:pfthy-coll}, we know that
$\ind\Sigma_2$ proves the uniform $\Pi_4$~reflection principle for~$\ind\Sigma_1$.
A relativization of this implies
\begin{math}
(M,\X)\models\Con(\ind\Sigma^0_1+\delta(P))
\end{math},
where $P$ is a new predicate.
An application of the Low Arithmetized Completeness Theorem~\cite[Theorem~I.4.27]{book:hajek+pudlak}
then gives an end-extension $M'\eextneq M$ and $A'\subseteq M'$
such that $(M',A')\models\ind\Sigma^0_1+\delta(P)$.
Thus $(M',\Delta_1(P)\hyp\Def(M',A'),M) \models \RCA_0+(\I1)+\ex X{\delta(X)}$.
Moreover, the universe and operations in $(M',A')$ are definable in $M$,
as is the range of a bijection between $M$ and an initial segment of $M'$
closed under $+^{M'}$.

Crucially, the formulas used to define $(M',A')$ are not only independent
of the model $M$ of $\Con(\ind\Sigma^0_1+\delta(P))$ that we consider,
but they are also polynomial-time uniform in $\delta$, in the sense that they can be constructed
in polynomial time given $\delta$ as input. (Each of the formulas arises essentially
by substituting the G\"odel number of $\ex X{\delta(X)}$ into a $\Sigma_2$ formula $\gamma(x)$
describing a low branch of a binary tree that is $\Delta_1$-definable with first-order parameter $x$
and is infinite as long as $x$ is the G\"odel number of a sentence consistent with $\ind\Sigma^0_1$.)
So, if $\pi$ is a proof of $\bot$ from $\RCA_0+(\I1)+\ex X{\delta(X)}$,
we can build a proof of $\bot$ from $\ind\Sigma_2(P)+{\delta(P)}$
by taking $\pi^*$ and ``relativizing it to $(M',\Delta_1(P)\hyp\Def(M',P),M)$''.
That is, we relativize first-order quantifiers to the formula defining the universe of $M'$
and replace first-order atoms by the formulas defining operations in $M'$;
replace second-order quantifiers by quantification over pairs of a $\Sigma_1(P)$ formula
and an equivalent $\Pi_1(P)$ formula, changing atoms $x \in X$
(for $X$ different from $P$) into appropriate instances of the $\Sigma_1(P)$-universal formula;
replace $\I$ by the formula defining the range of the inclusion $M \hookrightarrow M'$.
Additionally, it is necessary to add polynomially many new proof lines in order
to derive the translated axioms of $\RCA_0+(\I1)+\ex X{\delta(X)}$ from $\ind\Sigma_2(P)+{\delta(P)}$,
and to deriving translations of conclusions from translations of premises
for each inference in $\pi^*$. The details of the latter task are quite standard
and somewhat similar to those in the proof that $\ind\Sigma_1$ polynomially
simulates $\RCA_0$ based on the usual interpretation of $\RCA_0$
in $\ind\Sigma_1$, as described in e.g.~\cite[Chapter 3]{phd:Ignjatovic}.
\end{proof}




\begin{lem}\label{lem:simulation-rt22-I}
$\RCA_0 + \I$ polynomially simulates $\WKL_0 + \RT^2_2$ with respect to $\forall\Sigma^0_2$ sentences.
\end{lem}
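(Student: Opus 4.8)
The plan is to invoke Theorem~\ref{thm:FI>nspeedup}: it suffices to construct a \emph{polynomial} forcing interpretation~$\tau$ of $\WKL_0+\RT^2_2$ in $\RCA_0+\I$ which is \emph{polynomially $\forall\Sigma^0_2$-reflecting}. Since $\WKL_0+\RT^2_2$ is axiomatised by Weak K\"onig's Lemma, $\RT^2_2$, the finitely many basic axioms of $\RCA_0$, and the two schemes $\Delta^0_1$-comprehension and $\Sigma^0_1$-induction, ``$\tau$ polynomial'' reduces to producing, in time polynomial in $|\varphi|$, a $\RCA_0+\I$-proof that the instance of one of these schemes attached to a formula~$\varphi$ is forced; this uniformity will be apparent from the construction. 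Likewise ``polynomially $\forall\Sigma^0_2$-reflecting'' means producing in time polynomial in $|\gamma|$ a $\RCA_0+\I$-proof of $\bigl(\fain s{\Cond[\tau]}{s\forces_\tau\gamma}\bigr)\then\gamma$ for each $\forall\Sigma^0_2$ sentence~$\gamma$.

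The interpretation~$\tau$ describes a \emph{generic cut}, refining the initial-segment construction of~\cite{PY}. Work in a model $(M,\X)\models\RCA_0+\I$. A forcing condition carries finitely much data: a finite \emph{stem}~$\sigma\subseteq_\fin M$, whose intended meaning is that $\max\sigma$ and everything below it has been placed into the generic cut~$\mathbb J$; finite stems of the homogeneous sets and of the branches being built for the finitely many colourings of pairs and infinite binary trees activated by the condition; and one infinite \emph{reservoir}~$S\in\X$ with $\min S>\max\sigma$ from which these stems are to be grown. A stronger condition grows the stems using elements of~$S$, shrinks~$S$ correspondingly, and may activate further colourings and trees; this fits the ``good strong forcing'' format of Section~\ref{sec:forcing}. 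The generic model~$M[G]$ has as its first-order universe the cut~$\mathbb J$ of those $m\in M$ lying at or below $\max\sigma$ for some condition in~$G$ with stem~$\sigma$, and as its second-order universe the sets $A\cap\mathbb J$ for $A\in\X$ together with the generic homogeneous sets and branches. Standard initial-segment bookkeeping — density requirements closing~$\mathbb J$ under enough arithmetic operations — makes $(M\restd\mathbb J,\dots)$ into a model of~$\RCA_0$, and $\WKL$ is forced because of the generically added cofinal branches through (names for) infinite binary trees. The point is that $\RT^2_2$ is forced: given a condition and a name $\dot f$ for a colouring of pairs, first extend so that $\dot f$ is decided on a large finite set, then use~$(\I3)$ to extract from the reservoir a finite $\omega^{e_0}$-large set with $e_0\notin\I$, then apply Theorem~\ref{thm:RT22-largeness-main} (noting an $\omega^{e_0}$-large set is $\omega^{300x}$-large for $x=\lfloor e_0/300\rfloor$) to pass to an $\omega^{x}$-large $\dot f$-homogeneous subset~$H$; here $\lfloor e_0/300\rfloor\notin\I$ because~$\I$ is a cut closed under addition, hence under every fixed finite multiple. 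Committing~$H$ and thereafter growing the stem of~$\mathbb J$ cofinally within~$H$ makes~$H$ cofinal in~$\mathbb J$, and since the exponent~$x$ is nonstandard, $H$ is infinite in the sense of~$M[G]$ and homogeneous for~$\dot f$.

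For $\forall\Sigma^0_2$-reflection, fix $\gamma=\fa X{\fa x{\ex y{\fa z{\theta(x,y,z,X)}}}}$ with $\theta\in\Delta^0_0$ and suppose $M\not\models\gamma$, say $(M,\X)\models\fa y{\ex z{\neg\theta(a,y,z,A)}}$ for some $A\in\X$ and $a\in M$. The Skolem map $g(y)=\mu z.\,\neg\theta(a,y,z,A)$ is $\Delta^0_0$ in the parameters $a,A$ and total on~$M$. Choose a starting condition whose reservoir lies above~$a$ — pinning~$a$ into~$\mathbb J$ — and is, moreover, spread out with respect to~$g$, in that consecutive members~$s,s'$ satisfy $s'>g(s)$ (such a reservoir can be thinned out from any infinite set in~$\X$ using totality of~$g$). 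Every homogeneous set extracted as above then inherits this spreading property, so cofinality of~$\mathbb J$ in each of them already forces~$\mathbb J$ to be closed under~$g$. One now checks that the starting condition forces~$\neg\gamma$: taking the name~$\dot x$ for~$a$ and the name~$\dot X$ for $A\cap\mathbb J$, it forces $\fa y{\ex z{\neg\theta(\dot x,y,z,\dot X)}}$, because whenever a name for some~$y_0$ is forced into~$\mathbb J$ we may extend so that $g(y_0)\in\mathbb J$, and then $\neg\theta(a,y_0,g(y_0),A)$ — a $\Delta^0_0$ assertion about parameters all lying in~$\mathbb J$ — holds in~$M[G]$ by absoluteness. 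Hence some condition forces~$\neg\gamma$ and so, by Lemma~\ref{lem:LEM}, does not force~$\gamma$; thus $\RCA_0+\I$ proves $\bigl(\fain s{\Cond[\tau]}{s\forces_\tau\gamma}\bigr)\then\gamma$, and inspection shows this proof, like the proofs that axioms are forced, is found uniformly in polynomial time. By Theorem~\ref{thm:FI>nspeedup} this yields the lemma.

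The main obstacle is the density verification for~$\RT^2_2$: from an \emph{arbitrary} condition one must be able to extend while committing a fresh cofinal $\omega^e$-large homogeneous set with $e\notin\I$, without destroying cofinality of~$\mathbb J$ in the homogeneous sets and branches already committed and without violating the closure demands on~$\mathbb J$ (such as closure under the map~$g$ above). Pushing all these mutually constraining requirements through simultaneously, while keeping the $\omega$-exponents above~$\I$ throughout, is exactly why we must use the strong, explicit bound of Theorem~\ref{thm:RT22-largeness-main} — whose \emph{linear} dependence is what guarantees that the exponent of a homogeneous set stays outside a cut merely closed under addition — rather than the ineffective Proposition~\ref{prop:PY-largeness}; and it is the constructive, polynomial-time character of that bound (and of the surrounding initial-segment combinatorics) that keeps the whole interpretation polynomial, hence delivers a polynomial simulation.
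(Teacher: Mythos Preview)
Your overall strategy---invoke Theorem~\ref{thm:FI>nspeedup} via a polynomial, polynomially $\forall\Sigma^0_2$-reflecting forcing interpretation---matches the paper, as does your identification of the crucial ingredients: Theorem~\ref{thm:RT22-largeness-main} and the fact that closure of~$\I$ under addition (axiom~$(\I2)$) keeps $\lfloor e_0/300\rfloor$ above~$\I$. Your reflection argument is also broadly right in spirit. But the forcing notion you describe is quite different from the paper's and, as written, has genuine problems. In the paper (Definition~\ref{def:our-forcing}) a condition is simply a \emph{finite} set~$s$ that is $\omega^x$-large for some $x>\I$, ordered by inclusion; first-order names are numbers, second-order names are Ackermann codes of finite sets, $s\forces v\defd$ iff $s\cap[1,v]$ is not a condition, and $s\forces V\defd$ always. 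There are no stems, no ``activated'' colourings or trees, no infinite reservoir, and no generic second-order objects: to force $\RT^2_2$ one just shrinks~$s$ to an $\omega^{x/300}$-large homogeneous $s'\subseteq s$, which is itself a condition and forces ``$s'$ is an unbounded homogeneous set''; $\WKL$ is obtained via $\Sigma^0_1$-separation, with the separating set written down explicitly from~$s$ (Lemma~\ref{lem:rca-I-fint-rt22}). The paper's reflection (Lemma~\ref{lem:rca-I-poly-fint-rt22}) likewise takes the single condition~$s$ to be a finite $\omega^x$-large subset of the Skolem-closed set~$S$, obtained from~$(\I3)$.

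Your conditions instead carry an \emph{infinite} reservoir $S\in\X$ together with separate finite stems for each activated colouring and tree. Two concrete issues arise. First, after one application of Theorem~\ref{thm:RT22-largeness-main} the homogeneous set~$H$ you ``commit'' is finite (albeit $\omega^x$-large with $x\notin\I$), so if the construction is to continue inside~$H$ the reservoir can no longer be an infinite element of~$\X$ as your definition requires; you are tacitly switching to the paper's finite-reservoir notion midstream. Second, when a further colouring is activated and a new homogeneous set $H'\subseteq H$ is extracted, nothing in your description guarantees that the stem already committed for the \emph{first} colouring, extended by elements of~$H'$, remains homogeneous for that first colouring---the nesting you need is exactly what falls out automatically in the paper's approach, where the single condition~$s$ \emph{is} (a subset of) every previously chosen homogeneous set. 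Finally, enlarging the second-order universe by ``generic homogeneous sets and branches'' beyond $\{A\cap\mathbb J:A\in\X\}$ makes verifying $\Delta^0_1$-comprehension and $\Sigma^0_1$-induction delicate; the paper avoids this entirely because every second-order name is a ground-model coded finite set. In short, the bookkeeping you introduce is both unnecessary and, as sketched, incoherent; the paper's single-datum condition already does all the work.
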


To prove Lemma \ref{lem:simulation-rt22-I}, we now define a particular forcing notion in $\RCA_0 + \I$. We will then use a series of lemmas to show that this provides a polynomial forcing interpretation of $\WKL_0 + \RT^2_2$ in the sense of Section \ref{sec:forcing} and, moreover, that the forcing interpretation is polynomially $\forall \Sigma^0_2$-reflecting in the sense of Definition~\ref{def:reflects}.

\begin{defn}\label{def:our-forcing}
We let a \emph{finite} set $s$  be a forcing condition ($s\in\Cond$) if and only if $s$ is $\omega^x$-large for some $x > \I$. The relation
$\extendseq$ is defined simply as $\subseteq$. Note that there is no $\extendseq$-largest condition.

There are two sorts of names, a first- and a second-order sort. A name of the first-order sort is simply a natural number.
A name of the second-sort is also simply a natural number, this time viewed as coding a finite set according to the
Ackermann interpretation. (Formally, one may think of the first-order names being $\tuple{0,v}$ and second-order names being $\tuple{1,v}$
for various numbers $v$.) To avoid notational confusion, we will write $v$ for the first-order and $V$ for the second-order names.

We say that $s \forces v \defd$ if $s \cap [1,v]$ is not a condition, that is, if it is not $\omega^x$-large for any $x > \I$.
On the other hand, $s \forces V \defd$ holds always.

For each simple atomic formula  $\varphi(\bar v)$,
we define $s \forces \varphi$ to be $(s \forces \bar v \defd) \land \varphi(\bar v)$.
(In the special case of the simple atomic formula $v \in V$, this would be more precisely stated as
$(s\forces v \defd \land s \forces V \defd) \land v \in_{\mathrm{Ack}} V$,
where $\in_{\mathrm{Ack}}$ is the standard Ackermann interpretation
of $\in$ in arithmetic. We shall ignore this detail from now on.)
\end{defn}

Before showing that this forcing notion provides a polynomial forcing interpretation of $\WKL_{0}+\RT^{2}_{2}$, we first discuss the model-theoretic intuition behind it.
Let $(M,\X,I)$ be a countable model of $\RCA_0 + \I$.
The set of forcing conditions $\Cond$ and the relation $\extendseq$ of Definition~\ref{def:our-forcing}
are both definable in $(M,\X,I)$.
Take an $(M,\X,I)$-generic filter $G$ of $(\Cond,\extendseq)$, and put $I_{G}=\sup_{M}\{\min s: s\in G\}$.
One may then check that $(I_{G},\Cod(M/I_{G}))\models \WKL_{0}+\RT^{2}_{2}$
by simulating the inductive construction of a cut in the proof of \cite[Theorem~3.3]{KY:ramsey-combinatorics},
where various choices made in the inductive steps are now replaced by the genericity of $G$.
One may also check that for any $v\in M$, $v\in I_{G}$
if and only if $s \cap [1,v] \notin \Cond$ for some $s \in G$,
and that $s\cap I_{G}$ is cofinal in $I_{G}$ for any $s\in G$.

Thus, in Definition~\ref{def:our-forcing}, $s \forces v \defd$ is intended to hold only if $s \in G$ guarantees that $v$ will be in $I_G$. On the other hand, the name $V$ is intended to refer to $V \cap I_G$, which will always exist. The structure of $+, \cdot, \le$ on $I_G$ is inherited from the ground model, which motivates the trivial definition of forcing atomic formulas.

\begin{remark}
By Proposition \ref{prop:largeness}\ref{prop:largeness-split}, whenever an $\omega^x$-large set is split into two subsets, one of them is $\omega^{x-1}$-large. As a consequence, if $s \forces v \defd$, then $s \setminus [1,v]$ is a condition.
\end{remark}

\begin{lem}\label{lem:rca-I-fint-trivial}
The set of conditions $\Cond$ and the relations $\extendseq$, $\forces$ of Definition \ref{def:our-forcing} determine a forcing interpretation of $\Ltwo$ in the theory $\RCA_0 + \I$.
\end{lem}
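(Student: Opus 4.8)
The plan is to verify, working throughout in $\RCA_0+\I$, the framework conditions (FT1)--(FT4) together with the clauses (FI0)--(FI12) in the definition of a forcing interpretation of $\Ltwo$; all but one of these are bookkeeping. For (FT1)--(FT2) one reads $s'\extendseq s$ as ``$s,s'\in\Cond$ and $s'\subseteq s$'' and reads $s\forces v\defd$, $s\forces V\defd$ with the conjunct $s\in\Cond$ adjoined; (FT3)--(FT4) are visible from the definition of $s\forces\varphi$ for simple atomic $\varphi$, and each formula is uniform in its displayed variables. Clause (FI0) is immediate from axiom $(\I3)$ applied to any infinite set, which $\RCA_0$ supplies; (FI1)--(FI2) hold because $\extendseq$ is $\subseteq$ restricted to $\Cond$; and for (FI3) it suffices that $s\forces 0\defd$ holds for every condition $s$, since $s\cap[1,0]=\empset$ is never a condition.

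For the monotonicity and equality clauses we use that atomic forcing barely touches the ambient structure. By Proposition~\ref{prop:largeness}\ref{prop:largeness-monotone}, a superset of an $\omega^x$-large set is $\omega^x$-large, so a $\subseteq$-subset of a non-condition is again a non-condition; hence $s\forces v\defd$ is preserved when $s$ is shrunk along $\extendseq$, which, together with the absoluteness of the atomic matrix $\alpha(\bar v)$ and the fact that $s\forces V\defd$ always holds, yields (FI4) and (FI5). For (FI6)--(FI8) we use that $=$ is interpreted as genuine equality of numbers in the ambient model, so the forced equality relation inherits reflexivity, symmetry and transitivity once the $\defd$-conjuncts (which are themselves trivially symmetric and transitive) are collected. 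The density clauses (FI11)--(FI12) turn on a single observation: if $s$ is a condition with $s\cap[1,v]\in\Cond$, then $s':=s\cap[1,v]$ is an $\extendseq$-extension of $s$, and every $\extendseq$-extension $s''$ of $s'$ satisfies $s''\subseteq[1,v]$, so $s''\cap[1,v]=s''\in\Cond$ and $s''\nforces v\defd$; thus the density hypothesis fails at $s'$. Contrapositively, the density hypothesis forces $s\cap[1,v]\notin\Cond$, i.e.\ $s\forces v\defd$, giving (FI11); (FI12) follows by additionally invoking absoluteness of $\alpha(\bar v)$, and (FI10) is an equally direct check using absoluteness of atomic formulas.

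The one clause whose verification is not pure bookkeeping is the function clause (FI9). Its functionality conjunct is routine: $f$ denotes an honest function of the ambient model, so $\fa{w,w'}(w=f(\bar v)\then(w'=f(\bar v)\then w=w'))$ is absolute, and unwinding the forcing clauses for $\forall$ and $\then$ on top of (FI4)--(FI8) and density produces a proof. The existence conjunct reduces, via monotonicity (take $s''=s'$ and $w=f^M(\bar v)$), to the combinatorial claim that \emph{for every condition $s$ and every function symbol $f$ of $\Ltwo$, if $s\forces v_i\defd$ for all $i$, then $s\forces f(\bar v)\defd$}. This is clear for the constants and the successor; the cases $f\in\{+,\cdot\}$ carry the real content and are the main obstacle. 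Take $+$ with $v_1\le v_2$, and suppose towards a contradiction that $s\cap[1,v_1+v_2]$ were $\omega^z$-large for some $z>\I$. Since $\I$ is a cut, $z-1>\I$, so by Proposition~\ref{prop:largeness}\ref{prop:largeness-sums} the set $s\cap[1,v_1+v_2]$, after its minimum is removed, is a disjoint union of pairwise separated blocks, at least two of them (the degenerate case $\min=1$ reducing to this by peeling once), each $\omega^{z-1}$-large. Now $(s\cap[1,v_1+v_2])\setminus[1,v_2]$ has at most $v_1\le v_2$ elements but minimum exceeding $v_2$, so by Proposition~\ref{prop:largeness}\ref{prop:largeness-sums} it is not $\omega^2$-large, hence not $\omega^{z-1}$-large; and $s\cap[1,v_2]$ is not $\omega^{z-1}$-large either, since $s\forces v_2\defd$ and $z-1>\I$. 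So each of the blocks meets both $s\cap[1,v_2]$ and its complement inside $s\cap[1,v_1+v_2]$, which is impossible since the blocks are pairwise separated while those two sets meet in a single gap. The case of $\cdot$ is analogous, with the cardinality bound $v_1v_2-v_2\le v_2^2$ and $\omega^3$-largeness in place of $v_1\le v_2$ and $\omega^2$-largeness; the common point is that the ``tail'' piece has size polynomially bounded in its own minimum, which by Proposition~\ref{prop:largeness} bounds its largeness by a standard amount, and that $\I$ absorbs such amounts by $(\I1)$--$(\I2)$. This $\alpha$-largeness estimate is provable already in $\ind\Delta_0+\exp$, hence in $\RCA_0+\I$; everything else is routine manipulation of the forcing clauses.
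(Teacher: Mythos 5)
Your proof is correct and follows essentially the same route as the paper: you single out the function clause \ref{item:fint/functions} and the density clause \ref{item:fint/densedefd} as the nontrivial ones, and your argument for the latter (shrink to $s\cap[1,v]$ when $s\nforces v\defd$, so that every further extension $s''$ has $s''\cap[1,v]=s''\in\Cond$) is exactly the paper's. For \ref{item:fint/functions} you use a different part of Proposition~\ref{prop:largeness}: you carve $s\cap[1,v_1+v_2]$ into at least two pairwise separated $\omega^{z-1}$-large blocks via part~\ref{prop:largeness-sums} and derive a contradiction from each block needing to straddle the boundary at $v_2$, whereas the paper reduces to $s\forces(w^2)\defd$ for $w=\max\bar v$, splits $s\cap[1,w^2]$ as $(s\cap[1,w])\cup\bigl(s\cap(w,w^2]\bigr)$, and concludes in one step from the splitting property, part~\ref{prop:largeness-split}. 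Both arguments ride on the same observation --- the tail $s\cap(v_2,t(\bar v)]$ has too few elements relative to its own minimum to be $\omega^2$-large --- but the splitting lemma packages it more economically and sidesteps the $\min=1$ edge case you have to peel through (where, note, the resulting blocks are $\omega^{z-2}$-large rather than $\omega^{z-1}$-large as written; this is harmless since $\I$ is closed under successor, but should be stated). It is worth switching to the paper's version of the \ref{item:fint/functions} argument in any case, since it is reused verbatim in the proof of Lemma~\ref{lem:terms-defined}.
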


\begin{proof}
Checking that $\Cond$, $\extendseq$, $\forces$ determine a forcing translation from $\Ltwo$ into $\Ltwo \cup \{\I\}$ is unproblematic, clause \ref{item:fint/theory} of the definition of forcing interpretation trivializes, and most of the other clauses are also very easy to verify in $\RCA_0 + \I$. We discuss \ref{item:fint/functions} and \ref{item:fint/densedefd}, which are perhaps less obvious than the rest.

To verify \ref{item:fint/functions}, note that if the set $(x,y]$ is $\omega$-large, then $y  \ge 2x$, and if it is $\omega^2$-large, then $y > x2^x \ge x^2$. Assume that $s$ is $\omega^x$-large for some $x > \I$ and that $s \forces v\defd, s \forces w \defd$ with say $v \le w$. Since $s \cap [1,w]$ is not a condition and neither is $s \cap (w,w^2]$ (being a subset of the set $(w,w^2]$, which is not even $\omega^2$-large), it follows from Proposition \ref{prop:largeness}\ref{prop:largeness-split} that $s \cap [1,w^2]$ is not a condition. Thus, $s \forces (w^2)\defd$ and a fortiori $s$ forces both $(v+w)\defd$ and $(vw)\defd$.
This proves the ``existence'' part of \ref{item:fint/functions}. The ``uniqueness'' part follows easily from \ref{item:ftr/forall}, \ref{item:ftr/then}, and the definition of $\forces$.

To show \ref{item:fint/densedefd}, assume $s \nforces v \defd$. Then $s' \defeq s \cap [1,v]$ is a condition with $s' \extendseq s$. However, for any condition $s'' \extendseq s'$, we have $s'' \cap [1,v] = s''$, so certainly $s'' \nforces v \defd$.
\end{proof}

We can use the proof of Lemma \ref{lem:rca-I-fint-trivial} to show that, actually, a slightly stronger version of  \ref{item:fint/functions} holds.

\begin{lem}\label{lem:terms-defined}
There is a polynomial-time procedure which, given a term $t(\bar v)$, outputs an $\RCA_0 + \I$ proof of the statement that, for any condition $s$,
if $s \forces \bar v \defd$, then $s \forces t \defd$.
\end{lem}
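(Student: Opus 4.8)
The plan is to prove the statement by induction on the structure of the term $t(\bar v)$, mirroring the argument already given for clause \ref{item:fint/functions} in the proof of Lemma~\ref{lem:rca-I-fint-trivial}. The base case, where $t$ is a variable $v_i$, is immediate: the conclusion $s \forces v_i \defd$ is literally one of the hypotheses $s \forces \bar v \defd$, so the required $\RCA_0 + \I$-proof is a trivial one of constant size, producible in constant time.

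For the inductive step, suppose $t(\bar v) = f(t_1(\bar v),\dots,t_k(\bar v))$ where $f \in \{+, \cdot\}$ (the only function symbols of $\Ltwo$ besides the constants $0,1$, which are handled like the base case via the definedness of specific numerals). By the inductive hypothesis we already have, in polynomial time, $\RCA_0 + \I$-proofs that $s \forces \bar v \defd$ implies $s \forces t_j \defd$ for each $j$. It then suffices to append a fixed, short block of proof lines carrying out the reasoning from the proof of Lemma~\ref{lem:rca-I-fint-trivial}: if $s$ is $\omega^x$-large for some $x > \I$, and $s \forces a \defd$ and $s \forces b \defd$ with say $a \le b$, then since neither $s \cap [1,b]$ nor $s \cap (b, b^2]$ is a condition, Proposition~\ref{prop:largeness}\ref{prop:largeness-split} gives that $s \cap [1, b^2]$ is not a condition, hence $s \forces (b^2)\defd$, and since $a + b \le b^2$ and $ab \le b^2$ the monotonicity observation (cf.\ the Remark after Lemma~\ref{lem:simulation-I-I1}, or directly Proposition~\ref{prop:largeness}\ref{prop:largeness-monotone}) yields $s \forces (a+b)\defd$ and $s \forces (ab)\defd$. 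Each such block has size bounded by a fixed constant, and one must insert a case split on whether $t_1 \le t_2$ (and, for deeper terms, instantiate the values $a = t_1(\bar v)$, $b = t_2(\bar v)$ appropriately using the substitution machinery), but all of this is uniform and mechanical.

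Since the term $t$ has $|t|$ symbols, the induction has at most $|t|$ steps, and at each step we add only a constant number of proof lines on top of the proofs already constructed for the immediate subterms; the total proof size is therefore $O(|t|)$ and the whole construction runs in time polynomial in $|t|$. This gives the desired polynomial-time procedure.

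The one point requiring a little care — and the only place where anything could go wrong — is bookkeeping with the simple translation $(\cdot)^*$ and substitutability: the forcing relation $s \forces t \defd$ for a compound term $t$ unwinds, through Definition~\ref{defn:simple-tr} and clause \ref{item:ftr/nonsimple}, into a formula involving fresh bound variables naming the intermediate subterm values, so one should phrase the induction so that the statement proved at each stage is exactly the formula $s \forces t \defd$ as literally produced by the translation, and invoke Lemma~\ref{lem:f>defd}(ii) to handle the renaming of bound variables cleanly. With that convention fixed, the argument is routine, so I would present it briskly, referring back to the proof of Lemma~\ref{lem:rca-I-fint-trivial} for the combinatorial content rather than repeating it.
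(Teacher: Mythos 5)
Your proposal is correct, but it takes a genuinely different route from the paper's. The paper's proof first establishes the purely arithmetical bound $t(\bar v) < (\max \bar v)^n$ (with $n = |t|$) inside $\RCA_0$, then applies the squaring step ``$s \forces w\defd$ implies $s \forces (w^2)\defd$'' iteratively only $\log n$ times to conclude $s \forces (\max\bar v)^n\defd$, and finally invokes downward monotonicity of $\defd$-forcing. You instead run a single structural induction on the parse tree of $t$, applying the squaring-plus-monotonicity step from the proof of Lemma~\ref{lem:rca-I-fint-trivial} at each internal node. Both constructions are clearly polynomial-time. What the paper's version buys is that the combinatorial forcing content is invoked only $O(\log |t|)$ times, with the structural induction shoved into the elementary numerical bound (which needs no reference to $\I$ or largeness at all); what yours buys is a more direct, self-contained argument that never needs the separate $\RCA_0$-internal bound on term values. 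Two small points worth fixing: your size estimate $O(|t|)$ should really be $O(|t|^2)$, since each of the $O(|t|)$ inductive steps manipulates formulas of size up to $O(|t|)$ (still polynomial, so harmless); and your caution about the simple translation $(\cdot)^*$ is misplaced --- here $s \forces t\defd$ is not the forcing relation applied to a formula, but simply the substitution of the term $t$ for the variable $v$ in the $\Ltwo\cup\{\I\}$-formula ``$s\cap[1,v]$ is not a condition'' from Definition~\ref{def:our-forcing}, so FT8 and Definition~\ref{defn:simple-tr} never enter. You correctly flagged it as routine, and indeed it evaporates entirely once the definition is unwound.
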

\begin{proof}
Using a standard argument, we can prove in $\RCA_0$ that $t < (\max \bar v)^n$, where $n$ is the number of symbols in $t$. By the proof of Lemma \ref{lem:rca-I-fint-trivial}, we can also prove in $\RCA_0 + \I$ that for any $s$ and $w$, if $s \forces w \defd$, then $s \forces (w^2) \defd$. Iterating this reasoning $\log n$ times, we prove that if $s \forces w \defd$, then $s \forces (w^n) \defd$. We apply this to $w \defeq \max \bar v$ in order to deduce $s \forces (\max \bar v)^n \defd$ and hence $s \forces t \defd$.
\end{proof}

\begin{lem}\label{lem:D0elem}
There is a polynomial-time procedure which, given a $\Delta_0$ formula $\theta(\bar v, \bar V)$, outputs an $\RCA_0 + \I$ proof
of \[\forall s\, \forall \bar v \, \forall \bar V \, [s \forces \bar v \defd \then (\theta(\bar v, \bar V) \leftrightarrow (s \forces \theta(\bar v, \bar V)))].\]
\end{lem}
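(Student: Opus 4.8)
The plan is to argue by induction on the build-up of the $\Delta_0$ formula $\theta$, producing at each stage a proof of the corresponding instance of the equivalence and combining them; since each connective or quantifier contributes only boundedly many proof lines per subformula and $\theta$ has at most $|\theta|$ subformulas, the construction is polynomial-time uniform in $\theta$. We may and do assume $s\in\Cond$ throughout, since the hypothesis $s\forces\bar v\defd$ already contains $s\in\Cond$ as a conjunct; and as $s\forces V\defd$ holds for every condition $s$, no definedness hypothesis on $\bar V$ is needed, which is why only $\bar v$ is mentioned. By Lemma~\ref{lem:rca-I-fint-trivial} the forcing of Definition~\ref{def:our-forcing} is a forcing interpretation of $\Ltwo$ in $\RCA_0+\I$, so $\RCA_0+\I$ proves in particular that $\extendseq$ is reflexive on $\Cond$ and that definedness is monotone along $\extendseq$ (if $s'\extendseq s$, i.e.\ $s'\subseteq s$, and $s\forces w\defd$ then $s'\forces w\defd$, because $s'\cap[1,w]\subseteq s\cap[1,w]$). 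Two further facts carry the argument. First, definedness is downward closed in the numerical order: $\RCA_0+\I$ proves that $s\forces v\defd$ and $w\le v$ imply $s\forces w\defd$, again because $s\cap[1,w]\subseteq s\cap[1,v]$ and a subset of a set that is not $\omega^x$-large for any $x>\I$ is again not $\omega^x$-large for any such $x$, by Proposition~\ref{prop:largeness}\ref{prop:largeness-monotone}. Second, by Lemma~\ref{lem:terms-defined}, for every term $t(\bar v)$ one can construct in polynomial time an $\RCA_0+\I$-proof that $s\forces\bar v\defd$ implies $s\forces t\defd$. Together these give: if $s$ forces all the free numerical variables of the subformula in hand defined and $w<t$, then $s\forces w\defd$; equivalently, $s\nforces w\defd$ implies $w\ge t$.

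For atomic $\theta$ I would first pass to the simple translation $\theta^*$ of Definition~\ref{defn:simple-tr}, using Lemma~\ref{lem:fma-rel} and the fact that $s\forces\theta$ is by definition $s\forces\theta^*$ when $\theta$ is not simple; $\theta^*$ is built from simple atomic formulas using $\neg$, $\then$, and quantifiers $\fa x{(t[x]\then\cdots)}$ that merely pin $x$ to the value of a subterm $t$ (this is also where compound terms occurring in bounds $w<t$ get absorbed). Such a pinning quantifier is handled exactly like a bounded one in the induction below: by Lemma~\ref{lem:terms-defined} and downward closure of definedness the value that $t[x]$ forces $x$ to equal is forced defined, so $t[x]$ is false whenever $s\nforces x\defd$. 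At a simple atomic $\alpha(\bar v)$ there is nothing to prove: by definition $s\forces\alpha(\bar v)$ is $(s\forces\bar v\defd)\wedge\alpha(\bar v)$. For the propositional cases I would unfold the forcing clauses and use the induction hypothesis together with reflexivity and monotonicity of definedness. For instance $s\forces\neg\theta(\bar v)$ is, by clause~\ref{item:ftr/neg}, the formula $(s\forces\bar v\defd)\wedge\faexteq{s'}{s}{s'\nforces\theta(\bar v)}$, and under $s\forces\bar v\defd$ this is equivalent to $\neg\theta(\bar v)$: if $\theta(\bar v)$ held then $s$ itself would force it by the induction hypothesis (using $s\extendseq s$); and if $\neg\theta(\bar v)$ holds then every $s'\extendseq s$ satisfies $s'\forces\bar v\defd$ by monotonicity, hence $s'\nforces\theta(\bar v)$ again by the induction hypothesis. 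The case of $\then$ (clause~\ref{item:ftr/then}) is entirely analogous; note that only monotonicity of definedness, not any density clause, is needed, because for a $\Delta_0$ formula ``being forced'' collapses to ``being true''.

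The heart of the matter is the bounded quantifier case; take $\theta$ of the form $\fa{w<t}{\psi(w)}$, i.e.\ $\fa w{(w<t\then\psi(w))}$, so that the induction hypothesis applies to the subformula $w<t\then\psi(w)$. Unfolding clause~\ref{item:ftr/forall}, $s\forces\fa w{(w<t\then\psi(w))}$ asserts, besides definedness of the free numerical variables, that for every $w$ and every $s'\extendseq s$ there is $s''\extendseq s'$ such that, if $s'$ forces $w$ defined, then $s''$ forces $w<t\then\psi(w)$. I would show that, under the definedness hypothesis, this is equivalent to $\fa w{(w<t\then\psi(w))}$ by arguing separately for each $w$. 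If $s\forces w\defd$, then every relevant $s',s''$ forces $w$ defined by monotonicity, so the matrix reduces, via the induction hypothesis applied at $s''$, to the condition-independent statement $w<t\then\psi(w)$, whence the $\forall s'\,\exists s''$ block is vacuous (take $s''=s'$). If $s\nforces w\defd$, then $w\ge t$ by the preliminary observation, so $w<t\then\psi(w)$ holds trivially; and the matrix holds too, since for each $s'\extendseq s$ one may take $s''=s'$, the implication ``$s'$ forces $w$ defined $\then$ $s''$ forces $w<t\then\psi(w)$'' being then either vacuous (if $s'$ does not force $w$ defined) or true by the induction hypothesis (if it does, its conclusion being the true, condition-free formula $w<t\then\psi(w)$). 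The bounded existential quantifier is dual: unwind $\ex{w<t}{\psi(w)}$ through $\neg\fa w{\neg(w<t\land\psi(w))}$, or through the standard strong-forcing form of $s\forces\exists w$, make the same case split on whether $w$ is forced defined, and use that a genuine witness $w_0<t$ automatically satisfies $s\forces w_0\defd$.

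The step I expect to be the main obstacle is exactly this bounded-quantifier case: one has to keep careful track of the nested $\forall s'\extendseq s\,\exists s''\extendseq s'$ quantifiers in clauses~\ref{item:ftr/neg}, \ref{item:ftr/then}, and~\ref{item:ftr/forall}, and the argument closes only because, for the particular forcing of Definition~\ref{def:our-forcing}, definedness is simultaneously monotone along $\extendseq$ and downward closed in the numerical order, so that inside a bounded quantifier no genuinely new name is ever encountered and forcing merely reads off the classical truth value. A secondary point requiring care is that the whole construction, including the passage to $\theta^*$ and the treatment of its pinning quantifiers, be polynomial-time uniform in $\theta$; this is routine given Lemmas~\ref{lem:fma-rel}, \ref{lem:stermsubst}, and~\ref{lem:terms-defined}, as each inductive step adds only $O(|\theta|)$ further proof lines.
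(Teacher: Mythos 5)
Your proposal is correct and follows essentially the same route as the paper's own proof: induction on the structure of $\theta$, reduction of non-simple atoms via the $(\cdot)^*$ translation and Lemma~\ref{lem:terms-defined}, propositional steps by unfolding the forcing clauses and using monotonicity of definedness together with the induction hypothesis, and the bounded-quantifier step resting on the combination of Lemma~\ref{lem:terms-defined} with the downward closure of definedness in the numerical order. The only surface difference is organizational — you establish the equivalence of the $\forall s'\,\exists s''$ block with $w<t\then\psi(w)$ by a case split on whether $s\forces w\defd$, whereas the paper argues the two directions of the biconditional separately — but the ingredients and the reasons the argument closes are identical.
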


\begin{proof}
The construction of the proof proceeds by induction on the structure of $\theta$. As the reader will be able to check, it can be carried out in polynomial-time. We  give an informal description of how the proof is built for the atomic step and for each case of the inductive step.

\paragraph{Suppose $\theta$ is atomic.}
If $\theta$ is a simple formula,
then the equivalence between $\theta$ and $s \! \forces \! \theta$ follows immediately from the definition
of our forcing relation. Otherwise, if $\theta$ is say $t_1(\bar v) \le t_2(\bar v)$ (the other cases are similar or easier),
then $\theta^*$ is defined to be
\[\forall x_1\, \forall x_2\,(t_1[x_1] \then (t_2[x_2] \then  x_1 \le x_2)),\]
(see Definition \ref{defn:simple-tr})
and by \ref{item:ftr/nonsimple} we have to prove that a condition forces $\theta^*$ exactly if it forces
$\bar v \defd$ and $\theta$ in fact holds. To achieve this, one first shows by induction on subterms
$t'(\bar u)$ of $t_1$ and $t_2$ (where $\bar u$ is a subtuple of $\bar v$) that,
for the appropriate variable $x'$ and any $s \in \Cond$,
\begin{equation}\label{eqn:forcing-atomic}
s \forces t'[x'] \quad\textrm{ iff }\quad (s \forces \bar u \defd \land x' = t(\bar u)).
\end{equation}
The inductive step is proved using \ref{item:ftr/forall}, \ref{item:ftr/then},
the definition of $\forces$, and Lemma~\ref{lem:terms-defined}.
Once \eqref{eqn:forcing-atomic} is proved for $t'$ equal to $t_1$ and $t_2$,
one can prove the equivalence of $s \forces \theta^*$ with $(s \forces \bar v \defd \land t_1 \le t_2)$
by one more series of appeals to \ref{item:ftr/forall}, \ref{item:ftr/then},
the definition of $\forces$, and Lemma \ref{lem:terms-defined}.

\paragraph{Suppose $\theta$ is $\neg\eta$.} The inductive assumption gives us a proof that whenever $s \forces \bar v \defd$, then
$s \forces \eta(\bar v, \bar V)$ is equivalent to $\eta(\bar v, \bar V)$. This equivalence generalizes to all $s' \extendseq s$, since for each such $s'$ we have
$s' \forces \bar v \defd$ as well.

So, let $s \forces \bar v \defd$. Assuming $\neg\eta$, we have $s' \nforces \eta$ for any $s'\extendseq s$, and thus $s \forces \neg \eta$. On the other hand, if $s \forces \neg \eta$, then by the definition of forcing $s \nforces \eta$ and hence $\neg \eta$.

\paragraph{Suppose $\theta$ is $\eta(\bar v,\bar V)\then\xi(\bar v,\bar V)$.} The inductive assumption gives us a proof that
whenever $s \forces \bar v \defd$ and $s' \extendseq s$, then $s \forces \eta(\bar v, \bar V)$ is equivalent to $\eta(\bar v, \bar V)$
and likewise for $\xi$.

Let $s \forces \bar v \defd$. Assume $\eta \then \xi$. If $s'\extendseq s$ forces $\eta$, then $\eta$ holds and by our assumption so does $\xi$, which implies $s' \forces \xi$. This shows $s \forces \eta \then \xi$. Conversely, assume $\eta \land \neg \xi$. Then $s \forces \eta$ but for each $s' \extendseq s$ we have $s' \nforces \xi$. So, $s \nforces \eta \then \xi$.

\paragraph{Suppose $\theta(\bar v,\bar V)$
is $\fale w{t}{\eta(w,\bar v, \bar V)}$.}
Recall that $\fale w{t}\ldots$ is shorthand for $\forall w \,(w \le t \then \ldots)$.
(Here $t=t(\bar v)$ is a term in the variables $\bar v$.)
The atomic step and the inductive assumption, respectively,
give us proofs that if $s \forces \bar v \defd$ and $s \forces w \defd$, then
\begin{align*}
s \forces w \le t & \textrm{ is equivalent to } w \le t,  \\
s \forces \eta(w,\ldots) & \textrm{ is equivalent to } \eta(w,\ldots).
\end{align*}
As in the previous cases, the equivalences generalize to all $s' \extendseq s$.

Moreover, by Lemma \ref{lem:terms-defined},
we have a polynomial-time constructible proof that if $s \forces \bar v \defd$,
then $s' \extendseq s$ implies
$s' \forces t(\bar v) \defd$ and thus $s' \forces w \defd$ for any $w \le t(\bar v)$.

Let $s \forces \bar v \defd$ and assume $\fale w{t}{\eta(w,\ldots)}$.
Take $s'\extendseq s$ and $w$ such that
$s'\forces w\defd$ and $s'\forces w \le t$.
Then the atomic step gives $w \le t$,
which, by our assumption, implies $\eta(w,\ldots)$.
Thus $s' \forces \eta(w,\ldots)$,
which is what we want.

Conversely, assume $w \le t$ is such that $\neg\eta(w,\ldots)$.
Then $s \forces w \le t$.
Also, for each $s' \extendseq s$ we have $s' \forces w \defd$, so $s' \nforces \eta(w,\ldots)$.
Thus, $s \nforces \fale w{t}{\eta(w,\ldots)}$.
\end{proof}

\begin{lem}\label{lem:rca-I-fint-rt22}
The relations $\Cond$, $\extendseq$, $\forces$ of Definition \ref{def:our-forcing}
determine a polynomial forcing interpretation of $\WKL_0 + \RT^2_2$ in $\RCA_0 + \I$.
\end{lem}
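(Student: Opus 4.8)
The plan is to build on Lemma~\ref{lem:rca-I-fint-trivial}, which already gives that $\Cond$, $\extendseq$, $\forces$ of Definition~\ref{def:our-forcing} determine a forcing interpretation of $\Ltwo$ (pure logic) in $\RCA_0 + \I$, and then to establish the one remaining requirement, clause~\ref{item:fint/theory}: for every axiom $\sigma$ of $\WKL_0 + \RT^2_2$, $\RCA_0 + \I$ proves $\fain s{\Cond}{s \forces \sigma}$, and moreover such a proof can be produced in polynomial time given $\sigma$ (this last point is what upgrades ``forcing interpretation'' to ``polynomial forcing interpretation''). The axioms split into five groups: (a)~the finitely many basic axioms of $\RCA_0$; (b)~the $\Sigma^0_1$-induction scheme; (c)~the $\Delta^0_1$-comprehension scheme; (d)~Weak K\"onig's Lemma; (e)~$\RT^2_2$. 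Since (a) is finite and (d), (e) are single sentences, polynomial-time uniformity is trivial for them; only for the schemes (b) and (c) must we also check that the constructions of the $T$-proofs are uniform in the instance --- which they will be.

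For group (a): each basic axiom of $\RCA_0$ is a $\Pi_1$ sentence, say $\fa{\bar v}{\theta(\bar v)}$ with $\theta\in\Delta_0$, and is a theorem of $\RCA_0 + \I$. Given a condition $s$, to see $s\forces\fa{\bar v}{\theta(\bar v)}$ one peels the leading quantifiers using clause~\ref{item:ftr/forall}: for any $s'\extendseq s$ and name $\bar v$ with $s'\forces\bar v\defd$, put $s'':=s'$; as $\theta(\bar v)$ holds, Lemma~\ref{lem:D0elem} gives $s''\forces\theta(\bar v)$. (The clauses concerning the function symbols $+,\cdot$ in the definition of a forcing interpretation were already verified inside the proofs of Lemmas~\ref{lem:rca-I-fint-trivial} and~\ref{lem:terms-defined}.)

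Groups (c) and (d) are verified by transcribing into the forcing the standard model-theoretic reasons why $\Cod(M/I)$, for $I$ a cut of a model of $\bd\Sigma_1$, satisfies $\Delta^0_1$-comprehension and Weak K\"onig's Lemma. For $\WKL_0$: given a second-order name $T$ and a condition $s'$ forcing ``$T$ is an infinite binary tree'', the definition of $\forces$ and Lemma~\ref{lem:D0elem} show that the coded set $T$ contains, for every number $n$ that can be forced defined by an extension of $s'$, a string of length $n$; $\Delta_0$-overspill (valid since $\ind\Delta_0$ forbids $\Delta_0$-definable proper cuts) then yields an $M$-finite branch of the ``tree part of $T$'' whose length exceeds all such $n$, and this branch serves as a name $P$ with $s'\forces\fa n{(P\restd n\in T)}$ by~\ref{item:ftr/forall} and Lemma~\ref{lem:D0elem}; pushing this through~\ref{item:ftr/then} and~\ref{item:ftr/forall} gives $s\forces\WKL_0$. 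Group (c) is analogous: one uses $\bd\Sigma^0_1$ to bound, below an element determined by the condition, the witnesses that decide membership in the set being comprehended, and reads off a coded name for that set; the construction is uniform in the two formulas, hence polynomial-time.

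\textbf{The main obstacle is groups (b) and (e)}, where the finite combinatorics of $\alpha$-largeness is essential. For $\Sigma^0_1$-induction, that each instance is forced is shown by a density argument over $(\Cond,\extendseq)$ that mirrors, stage by stage, the cut construction in the proof of \cite[Theorem~3.3]{KY:ramsey-combinatorics} --- the point being that the largeness axiom of $\RCA_0 + \I$ together with genericity prevent any $\Sigma^0_1$-definable set from being a proper cut of the generic $I_G$ --- and it uses Proposition~\ref{prop:largeness}; the construction is uniform in the $\Sigma^0_1$ formula. For $\RT^2_2$, the crux of the whole paper, we must prove $s\forces\RT^2_2$ for every condition $s$. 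Fix $s'\extendseq s$ and a second-order name $F$ with $s'\forces$ ``$F\colon[\IN]^2\to 2$''; replace $F$ by the total colouring $\tilde F$ of $M$ that copies $F$ where $F$ is defined and is $0$ elsewhere, so $\tilde F$ and $F$ agree on every pair of forced-defined names. Now $s'$ is $\omega^x$-large for some $x>\I$; set $x':=\lfloor x/300\rfloor$ and note $x'>\I$, since $\I$ is closed under addition and hence under multiplication by the standard number~$300$. Then $s'$ is $\omega^{300x'}$-large, so Theorem~\ref{thm:RT22-largeness-main} (available because $\RCA_0 + \I$ extends $\ind\Sigma_1$) furnishes an $\omega^{x'}$-large $\tilde F$-homogeneous $H\subseteq s'$. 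As $x'>\I$, the set $H$ is a condition with $H\extendseq s'$; taking the name $H$ for $H$, one checks $H\forces$ ``$H$ is infinite and homogeneous for $F$'': homogeneity, once its unbounded leading quantifiers are peeled off via~\ref{item:ftr/forall}, has a $\Delta_0$ matrix that holds outright (because $\tilde F$ is constant on $[H]^2$ and $\tilde F=F$ on forced-defined names), so it is forced by Lemma~\ref{lem:D0elem}; infiniteness unwinds, via~\ref{item:ftr/forall}, to the remark that for any $s''\extendseq H$ and any name $n$ with $s''\forces n\defd$, the set $s''\setminus[1,n]$ is a condition (by the Remark after Lemma~\ref{lem:rca-I-fint-trivial}) whose minimum lies in $H$, exceeds $n$, and is forced defined by it. Passing back through the forcing clauses for the quantifiers and implication of $\RT^2_2$ yields $s\forces\RT^2_2$. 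The essential feature --- and the reason the bound of Theorem~\ref{thm:RT22-largeness-main} is needed in place of the one in \cite{PY} --- is that the largeness exponent drops only by a fixed multiplicative constant, so ``$>\I$'' is preserved; a superlinear blow-up would require $\I$ to be closed under multiplication, which $\RCA_0 + \I$ does not guarantee. Assembling the five groups shows that $\Cond$, $\extendseq$, $\forces$ form a polynomial forcing interpretation of $\WKL_0 + \RT^2_2$ in $\RCA_0 + \I$.
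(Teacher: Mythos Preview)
Your overall strategy and the handling of $\RT^2_2$ --- the heart of the lemma --- are essentially as in the paper. Your padding trick (extend $F$ to a total $\tilde F$) is a harmless variation on the paper's device of first splitting $s = s_0 \sqcup s_1$ so that $f$ is genuinely defined on all pairs below $\max s_0$; both work, and both rely on closure of $\I$ under addition to ensure $\lfloor x/300\rfloor > \I$.

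There are two places where your write-up diverges from the paper in ways that matter.

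First, for $\Delta^0_1$-comprehension and Weak K\"onig's Lemma the paper does not treat them separately but verifies the $\Sigma^0_1$~\emph{separation} scheme, which implies both. Given $s \forces \forall k\,(\sigma_1(k) \to \neg\sigma_2(k))$ with $\sigma_i \equiv \exists \ell\,\eta_i$, one takes as separating set the coded
\[
W_0 = \{k \le \max s : \exle{\ell_1}{\max s}{(\eta_1(\ell_1,k,\ldots) \land \falt{\ell_2}{\ell_1}{\neg\eta_2(\ell_2,k,\ldots)})}\}
\]
and checks directly that $s$ forces ``$W_0$ separates''. This is cleaner than your route, and it sidesteps a subtlety in your $\WKL$ sketch: your $\Delta_0$-overspill is applied to ``$T$ has a node of length $n$'', but this set need not be downward closed, since $T$ may fail to be a tree outside the forced-definable part. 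One can repair this (by intersecting with ``$T$ restricted to strings of length $\le n$ is a tree''), but the separation argument avoids the issue entirely.

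Second, and more seriously, your treatment of $\Sigma^0_1$-induction is a hand-wave (``mirrors the cut construction \ldots genericity prevents \ldots''), and this is where a genuine idea is missing. The paper gives a concrete density argument: given $s \forces \bar v\defd$ and $j$ with $s\setminus[1,j]$ a condition, let $m = \min(s\setminus[1,j])$ and use Proposition~\ref{prop:largeness}\ref{prop:largeness-sums} to split $s \setminus [1,m] = s_0 \sqcup \dots \sqcup s_{m-1}$ into $m$ many $\omega^{x-1}$-large consecutive blocks. The set of \emph{least} witnesses $\ell$ to $\exists \ell\,\eta(\ell,k,\ldots)$ for $k < j$ has at most $j < m$ elements, so by finite pigeonhole some block $s_{i_0}$ contains no such least witness. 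Then for each $k < j$, $\exlt{\ell}{\min s_{i_0}}{\eta}$ iff $\exle{\ell}{\max s_{i_0}}{\eta}$, so bounded induction shows $s_{i_0}$ forces one of $\neg\sigma(0)$, $\sigma(j{-}1)$, or $\exists k{<}j{-}1\,(\sigma(k)\land\neg\sigma(k{+}1))$. This pigeonhole-on-minimal-witnesses step is the missing content; a bare appeal to \cite{KY:ramsey-combinatorics} does not supply it.

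On polynomiality: the paper takes a shortcut you did not --- it observes that $\WKL_0 + \RT^2_2$ may w.l.o.g.\ be taken finitely axiomatized (finitely many instances of $\Delta^0_1$-CA and $\ind\Sigma^0_1$ suffice, the rest being derivable in polynomial size), so polynomiality is automatic. Your plan to check scheme-uniformity directly also works, but is unnecessary.
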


\begin{proof}
We have already shown in Lemma \ref{lem:rca-I-fint-trivial}
that $\Cond$, $\extendseq$, $\forces$ determine a forcing interpretation of $\Ltwo$ in $\RCA_0 + \I$.
So if we want to argue that this is actually a forcing interpretation of $\WKL_0 + \RT^2_2$, we only have to check \ref{item:fint/theory}.
In other words, we must show that any condition $s$ forces each of the axioms of $\WKL_0 + \RT^2_2$.
This is immediate for the axiomatization of non-negative parts of discrete ordered rings, so it remains to deal with $\Delta^0_1$~comprehension,
$\Sigma^0_1$~induction, Weak K\"onig's Lemma, and $\RT^2_2$.

We reason in $\RCA_0 + \I$, sometimes making tacit use of the following simple observation: for any condition $s$ and any $k$, a condition $s' \subseteq s$ with $s' \forces k\defd$ exists exactly if $s \setminus [1,k]$ is a condition.

We first deal with the $\Sigma^0_1$~separation scheme, which implies both $\Delta^0_1$~com\-pre\-hen\-sion and Weak K\"onig's Lemma. Consider a $\Sigma^0_1$~separation axiom:
\begin{multline*}\forall \bar V \, \forall \bar v \, [\forall k\,  (\sigma_1(k,\bar v, \bar V) \then \neg \sigma_2(k,\bar v, \bar V)) \\
\then \exists W \, \forall k \, ((\sigma_1(k,\bar v, \bar V) \then k \in W) \land (k \in W \then \neg \sigma_2(k,\bar v, \bar V)))],\end{multline*}
where $\sigma_1 \defeq \exists \ell\, \eta_1(\ell, k, \ldots)$ and $\sigma_2 \defeq \exists \ell\, \eta_2(\ell, k, \ldots)$ are $\Sigma^0_1$.
Let $s$ be a condition and assume that $s$ forces $\bar V \defd, \bar v \defd$ and that
\begin{equation}\label{eqn:forcing-ca} s \forces \forall k\, (\sigma_1(k,\bar v, \bar V) \then \neg \sigma_2(k,\bar v, \bar V)).\end{equation}
By the definition of forcing and Lemma \ref{lem:D0elem}, (\ref{eqn:forcing-ca}) implies the following: for every $k,\ell_1 \le \max s$ such that $s \setminus [1, \max(k,\ell_1)]$ is a condition, if $\eta_1(\ell_1,k,\ldots)$ holds, then there is no $\ell_2 \le \max s$ such that  $\eta_2(\ell_2,k,\ldots)$ and $s \setminus [1,\ell_2]$ is a condition. In particular, it cannot be the case that $\ell_1 \ge \ell_2$. Thus, taking
\[W_0 \defeq \{k \le \max s: \exists \ell_1 \! \le \! \max s\, (\eta_1(\ell_1,k,\ldots) \land \forall \ell_2 \! < \! \ell_1\, \neg \eta_2(\ell_2,k,\ldots))\},\]we conclude that $s \forces \forall k \, ((\sigma_1(k,\ldots) \then k \in W_0) \land (k \in W_0 \then \neg \sigma_2(k,\ldots)))$.

We now turn to $\Sigma^0_1$~induction. Consider the induction axiom for the $\Sigma^0_1$ formula $\sigma \defeq \exists \ell\, \eta(\ell, k, \bar v, \bar V)$ with respect to the variable $k$. Let $s$ be a condition forcing $\bar v \defd, \bar V \defd$ and let $j$ be such that $s \setminus [1,j]$ is a condition.
Let $m = \min (s \setminus [1,j])$ and let $x > \I$ be such that $s \setminus [1,j]$ is $\omega^x$-large. It follows that $s \setminus [1,m]$
is $\omega^{x-1}m$-large, which by Proposition \ref{prop:largeness}\ref{prop:largeness-sums} means that we can write $s \setminus [1,m]$ as a disjoint union
\[s \setminus [1,m] = s_0 \sqcup \dots \sqcup s_{m-1},\]
where each set $s_i$ is $\omega^{x-1}$-large (in particular, it is a condition) and, for each $i < m-1$, $\max s_i < \min s_{i+1}$. Define:
\[W_0 \defeq \{\ell: \exists k \! < \! j \, (\eta(\ell,k,\ldots) \land \forall \ell' \! < \! \ell \,\neg \eta(\ell',k,\ldots))\}.\]
Since $W_0$ has at most $j$ elements and $j < m$, the finite pigeonhole principle implies
that $W_0 \cap [\min s_{i_0}, \max s_{i_0}] = \emptyset$ for some $i_0 < m$. Note that for every $k < j$ we have
\begin{equation}\label{eqn:forcing-ind}\exists \ell \! < \! \min s_{i_0}\, \eta(\ell, k, \ldots) \textrm{ iff } \exists \ell \! \le \! \max s_{i_0}\, \eta(\ell, k, \ldots).\end{equation}
By bounded induction on $k$, either $\neg \exists \ell \! \le \! \max s_{i_0}\, \eta(\ell, 0, \ldots)$ or there is a maximal $k < j$ such that $\exists \ell \! \le \! \max s_{i_0}\, \eta(\ell, k, \ldots)$. By the property of $s_{i_0}$ stated in (\ref{eqn:forcing-ind}), it follows that $s_{i_0}$ forces one of the three statements $\neg \sigma(0,\ldots)$,
$\sigma(j{-}1,\ldots)$,
or $\exists k \! < \! j{-}1\, (\sigma(k,\ldots) \land \neg\sigma(k{+}1,\ldots))$.
In each case, the existence of $s_{i_0}$ implies that $s$ forces induction for $\sigma$ below $j$.

Finally, we deal with $\RT^2_2$. Assume that $s$ is a condition such that $s \forces f\defd$ and $s \forces f\colon [\IN]^2 \to 2$. This means that
$f$ is defined on each pair $\tuple{i,j}$ such that $i<j<\max s$ and $s \setminus [1,j]$ is a condition. Writing $s = s_0 \sqcup s_1$ where $\max s_0 < \min s_1$ and $s_0,s_1$ are both conditions, we can conclude that $f$ is defined on all arguments below $\max s_0$.

Let $x > \I$ be such that $s_0$ is $\omega^x$-large. By Theorem \ref{thm:RT22-largeness-main}, there exists an $\omega^{x/300}$-large set
$s' \subseteq s_0$ which is homogeneous w.r.t~$f$. By $(\I2)$, $\I$ is closed under addition, so $x/300 > \I$,
and hence $s'$ is a forcing condition.

It is easily verified that $s'$ forces ``$s'$ is homogeneous w.r.t~$f$''. However, we also have:
\[s' \forces \forall k\, \exists \ell\, (\ell  >  k \land \ell \in s').\]
To see this, take a condition $s'' \subseteq s'$ and $k$ such that $s'' \forces k \defd$. Then $s'' \setminus [1,k]$ is a condition, and so is $s'' \setminus [1,\ell]$ for $\ell\defeq\min (s'' \setminus [1,k])$. However, $s'' \setminus [1,\ell] \forces \ell \defd$, and since $\ell > k \land \ell \in s'$ is a true $\Delta_0$ statement, Lemma \ref{lem:D0elem} implies that it is forced by $s'' \setminus [1,\ell]$ as well.

This completes the proof that $\Cond, \extendseq, \forces$ give a forcing interpretation of $\WKL_0 + \RT^2_2$ in $\RCA_0 + \I$.
Polynomiality of the interpretation is immediate if we assume that $\WKL_0 + \RT^2_2$ is finitely axiomatized.
We may make this assumption w.l.o.g.~because both $\WKL$ and $\RT^2_2$ are single axioms,
while $\RCA_0$ can be axiomatized by using a finite number of instances of $\Delta^0_1$~comprehension
and $\Sigma^0_1$~induction in such a way that a proof of any of the other instances can be constructed in polynomial time.
\end{proof}

\begin{lem}\label{lem:rca-I-poly-fint-rt22}
The forcing interpretation of $\WKL_0 + \RT^2_2$ in $\RCA_0 + \I$ given by $\Cond$, $\extendseq$, $\forces$ of Definition \ref{def:our-forcing} is polynomially $\forall\Sigma^0_2$-reflecting.
\end{lem}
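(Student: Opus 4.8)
The plan is to render, at the level of forcing conditions, the heuristic given after Definition~\ref{def:our-forcing}: a forced $\forall\Sigma^0_2$ sentence holds in the generic cut, and that cut reflects such sentences back to the ground model because genericity makes it closed under the Skolem functions in question. First I would normalise: contracting blocks of like quantifiers by pairing — a polynomial-time manipulation provably correct in $\RCA_0$, hence transported to the forced versions through Proposition~\ref{prop:logic+} — it suffices to treat $\gamma$ of the shape $\fa{\bar u}{\fa{\bar U}{\ex y{\fa z{\theta_0(\bar u,\bar U,y,z)}}}}$ with $\theta_0$ bounded, and to produce in time polynomial in $|\gamma|$ an $\RCA_0+\I$ proof of $\fain s{\Cond}{(s\forces\gamma)}\then\gamma$. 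Fix a term $T$, depending only on $\theta_0$, such that the truth value of $\theta_0(\bar a,\bar A,y,z)$ depends only on $\bar A\cap[0,T(\bar a,y,z)]$. Reasoning in $\RCA_0+\I$: assume $\fain s{\Cond}{(s\forces\gamma)}$, fix $\bar a$ and $\bar A$, and, aiming at $\ex y{\fa z{\theta_0(\bar a,\bar A,y,z)}}$, suppose to the contrary that it fails, so that $g(y)=\mu z\,\neg\theta_0(\bar a,\bar A,y,z)$ is total; set $g^{+}(y)=\max(y+1,\,g(y),\,T(\bar a,y,g(y)))$.

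The core step is to use $(\I3)$ together with the combinatorics of $\alpha$-large sets (Proposition~\ref{prop:largeness}) and, more essentially, the constructive cut-building argument behind \cite[Theorem~3.3]{KY:ramsey-combinatorics}, to produce a single condition $s_0$ with $\min s_0>\max\bar a$ which ``scaffolds a generic cut closed under $g^{+}$'': concretely, $s_0$ is to be chosen hereditarily sparse so that for every subcondition $s_1\subseteq s_0$ and every name $y$ with $s_1\forces y\defd$, the set $s_1\setminus[1,g^{+}(y)]$ is again a condition. The point, exactly as in the generic-cut picture, is that whichever names $s_1$ declares defined form an initial segment sitting below the largeness level $\I$, that crossing this level requires exponentially many elements of $s_1$, and hence that a $g^{+}$-governed sparsification of $s_0$ cannot be pushed over the level by a single application of $g^{+}$; carrying this out with polynomially bounded proofs is precisely where the strengthened, explicit $\alpha$-large analysis of \cite{KY:ramsey-combinatorics} is needed. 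This scaffold is the heart of the argument — and the main obstacle.

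Granting $s_0$, put $D:=\max s_0$ and let $\bar A_D$ be the second-order name coding $\bar A\cap[0,D]$. Since $s_0\cap[1,\max\bar a]=\empset$ we have $s_0\forces\bar a\defd$, so $s_0\forces\gamma$ yields $s_0\forces\ex y{\fa z{\theta_0(\bar a,\bar A_D,y,z)}}$; unfolding this via the forcing clauses for $\exists,\neg,\forall$ (clauses \ref{item:ftr/neg}, \ref{item:ftr/forall}, \ref{item:ftr/defd}) together with Lemma~\ref{lem:monodense}, one obtains a subcondition $s_1\subseteq s_0$ and a name $y$ with $s_1\forces y\defd$ and $s_1\forces\fa z{\theta_0(\bar a,\bar A_D,y,z)}$. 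By the scaffold property, $s_1\setminus[1,g^{+}(y)]$ is a condition, and it forces $g^{+}(y)\defd$ and a fortiori $g(y)\defd$; instantiating $s_1\forces\fa z{\theta_0(\bar a,\bar A_D,y,z)}$ at the name $g(y)$ through clause~\ref{item:ftr/forall} and then applying Lemma~\ref{lem:D0elem} to the bounded formula $\theta_0$ gives $\theta_0(\bar a,\bar A_D,y,g(y))$ in the ground model. Since $T(\bar a,y,g(y))\le g^{+}(y)<\max s_0=D$, the sets $\bar A_D$ and $\bar A$ agree on the part that $\theta_0(\bar a,\cdot\,,y,g(y))$ inspects, whence $\theta_0(\bar a,\bar A,y,g(y))$ — contradicting $g(y)=\mu z\,\neg\theta_0(\bar a,\bar A,y,z)$. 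Therefore $\ex y{\fa z{\theta_0(\bar a,\bar A,y,z)}}$, and as $\bar a,\bar A$ were arbitrary, $\gamma$ follows.

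Finally, every ingredient above is uniform and polynomial-time in $\gamma$: the normalisation; the proofs of the $\alpha$-largeness facts and of the existence of the scaffold $s_0$, which come with explicit, polynomially bounded $\RCA_0+\I$ derivations (along the lines of \cite{KY:ramsey-combinatorics}); the instance of Lemma~\ref{lem:D0elem} for the specific $\theta_0$; and the unfolding of the finitely many nested forcing clauses, whose total size is polynomial in $|\gamma|$ by Lemmas~\ref{lem:terms-defined}, \ref{lem:monodense}, and \ref{lem:stermsubst}. Hence the forcing interpretation of Definition~\ref{def:our-forcing} is polynomially $\forall\Sigma^0_2$-reflecting, which is what the lemma asserts.
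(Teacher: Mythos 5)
Your overall strategy — work contrapositively, Skolemize the $\Pi^0_2$ matrix, build a forcing condition from a set closed under the Skolem bounds, and then use that closure to push the witness and a name below it into definedness and invoke Lemma~\ref{lem:D0elem} — does match the paper's argument. But you have misdiagnosed where the difficulty lies, and the step you yourself flag as ``the heart of the argument --- and the main obstacle'' is precisely the one you have not carried out. The paper does carry it out, and it is short and elementary.

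The scaffold $s_0$ does not require ``the constructive cut-building argument behind \cite[Theorem~3.3]{KY:ramsey-combinatorics}'', nor ``the strengthened, explicit $\alpha$-large analysis''. That machinery enters only when forcing $\RT^2_2$ itself (Lemma~\ref{lem:rca-I-fint-rt22}); reflection uses nothing beyond Proposition~\ref{prop:largeness} and axiom $(\I3)$. Once you have assumed that $g$ is total, $\Sigma^0_1$ collection (available in $\RCA_0$) and primitive recursion give the increasing sequence $k_0 < k_1 < \cdots$, where $k_{n+1}$ is the least $m > k_n$ dominating $g^+$ on $[0,k_n]$; the set $S := \{ k_n : n \in \IN \}$ is $\Delta^0_1$-definable and infinite, and $(\I3)$ then yields a condition $s_0 \subseteq S$ with $\min s_0 > \max\bar a$. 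The scaffold property you demand now falls out immediately: if $s_1 \subseteq s_0$ is a condition and $s_1 \forces y \defd$, then by Proposition~\ref{prop:largeness}\ref{prop:largeness-split} the set $s_1 \setminus [1,y]$ is a condition, so it has two smallest elements $m_1 < m_2$, both in $S$, and by the construction of $S$ one has $g^+(y) < m_2$; hence $s_1 \setminus [1, g^+(y)]$ differs from $s_1 \setminus [1,y]$ by at most dropping $m_1$, and is a condition by another application of Proposition~\ref{prop:largeness}\ref{prop:largeness-split} together with $(\I2)$. This is all uniform, of fixed shape in $\gamma$, and plainly gives a polynomial-size proof. If you replace your appeal to the heavy Ramsey-scale $\alpha$-largeness combinatorics by this elementary construction, the remainder of your proposal — the normalization, the comparison of $\bar A_D$ with $\bar A$ on the window inspected by $\theta_0$, the final instantiation and use of Lemma~\ref{lem:D0elem} — does go through and is essentially the paper's argument.
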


\begin{proof}
Let $\varphi \defeq \exists W \, \exists w \, \forall y \, \exists z\, \theta(W,w,y,z)$ be an $\exists \Pi^0_2$ sentence.
We sketch a proof in $\RCA_0 + \I$ that, assuming $\varphi$,
there is a condition $s$ such that $s \nforces \forall W\, \forall w\, \exists y\, \forall z\, \neg \theta$.
On the basis of the sketch,  it will be routine to verify
that the proof can be constructed in polynomial time on input $\varphi$.

Let $A,k$ be such that $\forall y \, \exists z \, \theta(A,k,y,z)$. By $\Sigma^0_1$ collection, for each $\ell$ there exists $m$ such that $\fale y \ell {\exle z m  {\theta(A,k,y,z)}}$. Use primitive recursion to define a sequence of numbers by:
\begin{align*}
k_0 \defeq & \quad k, \\
k_{n+1} \defeq & \quad \textrm{smallest } m >k_n \textrm{ such that } \fale y {k_n} {\exle z m  {\theta(A,k,y,z)}}.
\end{align*}
The axioms of $\RCA_0$ imply the existence of the set $S = \{k_n: n \in \IN\}$, which is clearly infinite.
Use the axiom $(\I3)$ of $\RCA + \I$
to obtain a finite set $s \subseteq S$ which is a forcing condition.
Below, we abuse notation and write $A$ for the finite set $A{\upharpoonright}_{\max s}$.

We claim that $s \forces \forall y \, \exists z \, \theta(A,k,y,z)$,
which is enough to imply $s \nforces \forall W\, \forall w\, \exists y\, \forall z\, \neg \theta$.
Take $s' \extendseq s$ and $\ell$ such that $s' \forces \ell \defd$.
Since $s' \cap [1,\ell]$ is not a condition, $s' \setminus [1,\ell]$
must be one. Let $m_1 < m_2$ be the two smallest
elements of $s' \setminus [1,\ell]$.
By Proposition \ref{prop:largeness}\ref{prop:largeness-split}, $s' \setminus [1,m_2]$ is also a condition
and $(s' \setminus [1,m_2]) \forces m_2 \defd$.
It follows from the definition of $s$ that $\fale y {m_1} {\exle z {m_2}  {\theta(A,k,y,z)}}$.
In particular, there is some $m \le m_2$ such that
$(s' \setminus [1,m_2]) \forces m \defd$ and $\theta(A,k,\ell,m)$.
By Lemma \ref{lem:D0elem}, we get $(s' \setminus [1,m_2]) \forces \theta(A,k,\ell,m)$.
This implies $(s' \setminus [1,m_2]) \forces \exists z \, \theta(A,k,\ell,z)$,
which is what we wanted.
\end{proof}

\begin{proof}[Proof of Lemma \ref{lem:simulation-rt22-I}]
This follows directly from Lemma \ref{lem:rca-I-fint-rt22}, Lemma \ref{lem:rca-I-poly-fint-rt22},
and Theorem \ref{thm:FI>nspeedup}.
\end{proof}

\begin{proof}[Proof of Theorem \ref{thm:simulation-rt22-rca0}]
By Lemma \ref{lem:simulation-rt22-I} there is a polynomial-time procedure which,
given a proof $\pi$ of a $\forall\Sigma^0_2$ sentence $\psi$ in $\WKL_0 + \RT^2_2$,
outputs a proof $\pi'$ of $\psi$ in $\RCA_0 + \I$. By Lemma \ref{lem:case-distinction},
a further polynomial-time procedure outputs
a proof $\pi''$ of $\psi$ in $\RCA_0 + \ind\Sigma^0_2$ and
a proof $\pi'''$ of $\psi$ in $\RCA_0 + \neg\ind\Sigma^0_2$.
Combine $\pi'', \pi'''$, and a case distinction to obtain
a proof of $\psi$ in $\RCA_0$.
\end{proof}

\section{Ramsey for pairs: speedup over $\RCA^*_0$}

We will now prove that Theorem \ref{thm:simulation-rt22-rca0} breaks down completely in the absence
of $\Sigma^0_1$ induction, that is, if the base theory $\RCA_0$ is replaced by $\RCA^*_0$,
 although $\WKL_0^*+\RT^2_2$ is also $\forall\Sigma^0_2$-conservative
  over $\RCA_0^*$~\cite{Y-MLQ13},

\begin{thm}\label{thm:speedup}
$\RT^2_2$ has non-elementary speed-up over $\RCA_0^*$ with respect to $\Sigma_1$~sentences.
\end{thm}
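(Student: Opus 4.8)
The plan is to exhibit a family of $\Sigma_1$ sentences $\varphi_n$ together with short proofs of them in $\RT^2_2$ (over $\RCA^*_0$), while arguing that any $\RCA^*_0$-proof must be non-elementarily long. The natural candidates are finite consistency statements, or more concretely sentences asserting that the function $n \mapsto$ (tower of exponentials of height $n$) is total at a specific input, or that a certain finite combinatorial object exists. The key asymmetry we want to exploit is that $\RT^2_2$ proves $\bd\Sigma^0_2$, hence $\Sigma^0_2$ collection is available, which together with the finite Ramsey bound of Theorem~\ref{thm:RT22-largeness-main} lets one prove in a few lines that suitably large $\alpha$-large sets exist; but over $\RCA^*_0$, whose first-order part is only $\bd\Sigma_1+\exp$, proving that iterated-exponential-size objects exist requires, by a cut-elimination / Herbrand-style argument, a proof whose length encodes that many nested exponentials. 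So the first step is to pin down the precise $\Sigma_1$ sentence: I would take $\varphi_n$ to be a bounded formula with iterated-exponential terms asserting the consistency of a fixed finite fragment (say, $\ind\Delta_0$ restricted to proofs of length $\le 2^{2^{\cdots}}$, $n$-fold), or equivalently that a finite initial segment of $\IN$ of iterated-exponential size can be well-ordered by a given computable relation without a descending chain of bounded length. The point is that $\RCA^*_0 + \RT^2_2$, via the $\alpha$-large Ramsey bound, proves the totality of the relevant fast-growing function on inputs that give all the $\varphi_n$'s uniformly, with a single short proof parametrized by $n$; whereas $\RCA^*_0$ alone cannot even state that such large numbers exist as values of a term without $\exp$ being iterated.

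The second step is the upper bound (short proofs in $\RT^2_2$). Here I would argue that $\RCA^*_0 + \RT^2_2 \vdash \RCA_0$ — indeed $\RT^2_2$ over $\RCA^*_0$ implies $\Sigma^0_1$ induction, essentially because $\bd\Sigma^0_2$ holds and one can run the standard argument that $\RT^2_2$ implies $\bd\Sigma^0_2$, combined with the fact that $\exp$ is already present. More carefully: it is known that $\RCA^*_0 + \RT^2_2 \supseteq \RCA_0$ because $\RT^2_2$ proves $\ind\Sigma^0_2$-strong principles are false but $\bd\Sigma^0_2$ holds; in any case, over $\RCA^*_0$, $\RT^2_2$ proves the totality of superexponential and hence of all the iterated exponentials, and it does so by a \emph{uniform} proof (apply Ramsey to a fixed colouring on a finite set whose largeness is guaranteed once we know the function is defined up to a smaller point — a diagonal/bootstrapping argument of constant-per-step size). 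So there is a fixed constant $c$ and proofs $\pi_n$ of $\varphi_n$ in $\RCA^*_0 + \RT^2_2$ with $|\pi_n| \le c \cdot n + c$ (or at worst polynomial in $n$).

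The third and hardest step is the lower bound: any $\RCA^*_0$-proof of $\varphi_n$ has size $\ge f(n)$ for every elementary $f$, eventually. The obstacle is that $\RCA^*_0$ is not finitely axiomatizable and has second-order content, so a direct Herbrand analysis is delicate. The plan is to use the conservativity of $\RCA^*_0$ over $\bd\Sigma_1 + \exp$ (stated in the excerpt) to reduce to a purely first-order statement, and then invoke the known fact — essentially a theorem of the Paris–Wilkie / Solovay–Ketonen circle, or a cut-elimination argument in the style of the proof-speedup literature surveyed by Pudlák — that $\bd\Sigma_1 + \exp$ (equivalently $I\Delta_0 + \exp$) has no elementary bound on proofs of finite consistency statements of growing strength, because a short proof would, via partial cut-elimination, yield a model-theoretic construction collapsing a tower of definable cuts. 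Concretely: if $\varphi_n$ had an $\RCA^*_0$-proof of elementary size, then by the conservativity reduction it has a $\bd\Sigma_1+\exp$-proof of elementary size, and then by a bounded-proof-length reflection argument $\bd\Sigma_1+\exp$ would prove its own consistency for proofs of that length — contradicting Gödel's second incompleteness theorem in the sharpened, length-sensitive form available here. The main technical work, which I would defer to the combinatorial core, is choosing $\varphi_n$ so that this length-sensitive Gödel argument goes through with the right quantitative bound, and verifying that the second-order comprehension of $\RCA^*_0$ genuinely adds nothing — this is exactly where the $\Pi^0_1$- (in fact $\Pi_2$-) conservativity of $\RCA^*_0$ over $\bd\Sigma_1 + \exp$ is used, so the reduction is clean.
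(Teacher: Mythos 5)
Your upper-bound argument rests on a false claim: you assert that $\RCA^*_0 + \RT^2_2 \vdash \RCA_0$, i.e., that $\RT^2_2$ implies $\Sigma^0_1$ induction over $\RCA^*_0$. This is not true, and in fact the whole of this section of the paper is premised on its failure. The paper states (and cites \cite{Y-MLQ13} for) the $\forall\Sigma^0_2$-conservativity of $\RT^2_2$ over $\RCA^*_0$; if $\RT^2_2$ implied $\ind\Sigma^0_1$ over $\RCA^*_0$, then $\RCA^*_0 + \RT^2_2$ would prove the $\Pi_2$ sentence asserting totality of superexponentiation, which $\RCA^*_0$ does not --- contradiction. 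The implication $\RT^2_2 \to \ind\Sigma^0_1$ in the literature uses $\RCA_0$ as the base and so is circular here. This undermines your second step entirely. You also point to Theorem~\ref{thm:RT22-largeness-main} (an \emph{upper} bound on Ramsey numbers in the $\alpha$-largeness scale) as the combinatorial input, but what the paper actually needs is the opposite direction: the probabilistic \emph{lower} bound on finite Ramsey numbers (a $2$-colouring of $[2^{k/2}]^2$ with no homogeneous set of size $k$). That is what makes Lemma~\ref{lem:size-closure} work --- the cut $I=\{k : \text{every infinite set has a } 2_k\text{-element finite subset}\}$ is exponentially closed --- and the rest of the upper bound is an application of Pudl\'ak's iterated cut-shortening to get poly$(n)$-size proofs of $\exists y\,(y=2_{2_{2_n}})$, without ever establishing $\ind\Sigma^0_1$.

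On the lower bound, your outline is closer to right but two points need repair. First, the reduction to the first-order fragment must be via a \emph{polynomial simulation}, not merely a conservativity theorem --- conservativity by itself gives no control over proof size, and you cannot conclude that a short $\RCA^*_0$-proof yields a short $\bd\Sigma_1+\exp$-proof without it; the paper invokes Theorem~\ref{thm:IBWKL} for exactly this reason. Second, your appeal to a length-sensitive G\"odel incompleteness argument (the Friedman--Pudl\'ak finitistic G\"odel theorem) is a valid but heavier route that the paper only sketches in a remark, for $\Delta_0(\exp)$ witnesses. For $\Sigma_1$ witnesses the paper gives a self-contained Herbrand/cut-elimination bound: a size-$n$ proof of $\exists z\,\varphi(z)$ (with $\varphi$ bounded) in a finitely axiomatized version of $\ind\Delta_0+\exp$ forces $\exists z\le 2_{2_{p(n)}}\,\varphi(z)$ to hold in $\omega$, for a fixed polynomial $p$, and this directly gives the non-elementary lower bound against $\exists y\,(y=2_{2_{2_n}})$.
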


We note that the speedup of $\RT^2_2$ over $\RCA^*_0$ can in fact be witnessed by $\Delta_0(\exp)$ sentences,
though the proof involves a slightly larger amount of background than for the $\Sigma_1$ case.
The idea of the argument is explained in the remark after the proof of Theorem \ref{thm:speedup}.

The basic reason why Theorem \ref{thm:speedup} holds is expressed in the following lemma.
Once the lemma is proved, the upper and lower bounds used to derive the theorem
are obtained by more or less standard techniques described e.g.~in the survey \cite{incoll:pflen}.

\begin{lem}\label{lem:size-closure}
$\RCA_0^* + \RT^2_2$ proves the following statement: ``for every $k$, if each infinite set has a finite subset with $k$ elements, then each infinite set has a finite subset with $2^k$ elements''.
\end{lem}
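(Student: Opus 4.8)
The plan is to fix $k$, assume the hypothesis $P(k)$ that every infinite set has a finite subset with $k$ elements, fix an arbitrary infinite set $S$, and produce a finite subset of $S$ with $2^k$ elements using a \emph{single} application of $\RT^2_2$. For $b \in S$ write $\mathrm{ind}_S(b) := |S \cap [0,b]|$ for the position of $b$ in $S$; this is a $\Delta_0$ notion (we never need to code $S \cap [0,b]$ as a set), and $\mathrm{ind}_S$ is strictly increasing, hence injective, along $S$. Define a $\Delta_0$ colouring $c \colon [S]^2 \to 2$ by setting, for $a < b$ in $S$, $c(\{a,b\}) = 1$ iff $\mathrm{ind}_S(b) \ge 2 \cdot \mathrm{ind}_S(a)$, and apply $\RT^2_2$ to obtain an infinite $c$-homogeneous set $H \subseteq S$.

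The crucial step is to rule out colour $0$. Suppose $H$ were homogeneous for colour $0$, and put $h := \min H$ and $C := 2 \cdot \mathrm{ind}_S(h)$. Then $\mathrm{ind}_S(b) < C$ for every $b \in H$: this is immediate for $b = h$, and for $b > h$ it follows from $c(\{h,b\}) = 0$. Since $\mathrm{ind}_S$ is injective on $S$, the map $b \mapsto \mathrm{ind}_S(b)$ then injects $H$ into $\{0, 1, \dots, C-1\}$, so $H$ has at most $C$ elements, contradicting its infinitude. Hence $H$ is homogeneous for colour $1$: the position in $S$ at least doubles from each element of $H$ to the next.

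To conclude, apply $P(k)$ to the infinite set $H$ to obtain a $k$-element subset of $H$, and then --- using only that $H$ is unbounded, which lets us take one more element above it --- extend this to a set $\{h_0 < h_1 < \dots < h_k\} \subseteq H$. Since $c(\{h_i, h_{i+1}\}) = 1$ for each $i < k$, a $\Delta_0$ induction on $i \le k$, carried out along the chain $\langle h_0, \dots, h_k \rangle$ that we already hold as a coded finite set, gives $\mathrm{ind}_S(h_i) \ge 2^i \cdot \mathrm{ind}_S(h_0) \ge 2^i$. In particular $|S \cap [0, h_k]| \ge 2^k$, so the set consisting of the $2^k$ least elements of $S$ that are $\le h_k$ is a finite subset of $S$ with exactly $2^k$ elements. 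As $S$ was arbitrary, this establishes $P(2^k)$.

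The point demanding real care is that $\RCA^*_0$ provides only $\Delta_0$ induction together with $\exp$, so every use of recursion or induction must be over a bounded formula and of bounded length. This is precisely why the hypothesis $P(k)$ --- rather than the bare infinitude of $H$ --- is needed to pull $k$ elements out of $H$ when $k$ is nonstandard: extracting them by iterating ``least element of the remainder'' would be an induction of length $k$ on a $\Sigma^0_1$ condition, which is unavailable, whereas once $P(k)$ has supplied $k$ elements, adjoining one more is a single step. The geometric estimate is likewise a bounded induction along a chain held in hand, and the colouring $c$ is $\Delta_0$. The only non-routine move is ruling out colour $0$, and it is short; the argument uses $\RT^2_2$ just once and does not need $\WKL$.
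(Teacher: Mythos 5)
Your colour~$1$ analysis is sound, but the step that rules out colour~$0$ is exactly where the lemma's difficulty lives, and it does not go through in $\RCA_0^*$. Having shown that every $b\in H$ satisfies $\mathrm{ind}_S(b)<C$ with $C=2\cdot\mathrm{ind}_S(\min H)$, you conclude ``so $H$ has at most $C$ elements, contradicting its infinitude.'' In $\RCA_0^*$ a set is infinite merely in the sense of being unbounded, and the principle ``an unbounded set cannot be $\Delta^0_1$-injected into a bounded interval'' is essentially $\ind\Sigma^0_1$ in disguise: turning the bound $C$ on the indices into a bound on $H$ itself requires finding the $C$-th element of $S$, i.e.\ iterating ``next element of $S$'' roughly $C$ times, which is $\Sigma^0_1$ recursion of nonstandard length. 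You yourself flag precisely this obstacle in your last paragraph when explaining why $P(k)$ is needed to pull $k$ elements out of $H$ --- but the same obstacle applies here, and $P(k)$ does not repair it, since $C=2\cdot\mathrm{ind}_S(\min H)$ bears no relation to $k$ and can be far larger. Concretely, the most one can extract is: apply $P(k)$ to the colour-$0$ homogeneous $H$ to get $k$ elements $h_1<\dots<h_k$, note $\mathrm{ind}_S(h_k)\ge\mathrm{ind}_S(h_1)+(k-1)$ while $\mathrm{ind}_S(h_k)<2\,\mathrm{ind}_S(h_1)$, hence $\mathrm{ind}_S(h_1)>k-1$. That gives no contradiction and no exponential gain; it merely restates $P(k)$.

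The paper's proof avoids this trap by arguing contrapositively and baking the exponential blow-up into the colouring itself via the probabilistic Ramsey lower bound. Supposing some infinite $A$ has no $2^{k/2}$-element subset, one fixes $f\colon[2^{k/2}]^2\to 2$ with no homogeneous set of size $k$ (this exists provably in $\RCA_0^*$), pulls $f$ back along the enumeration of $A$ to a colouring $g$ of $[\IN]^2$, and then shows that any $\ell$ elements of a (thinned) $g$-homogeneous set yield an $f$-homogeneous set of size $\ell$, forcing $\ell<k$. Now $P(k)$ applied to the infinite homogeneous set gives the contradiction directly --- no separate pigeonhole-on-an-unbounded-set step is needed. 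The key difference is that your doubling colouring needs an argument to dismiss the ``slow'' colour, while the paper's colouring is engineered so that \emph{every} homogeneous set is small in the sense that $P(k)$ can detect. If you want to keep your own framing, you would need to replace the colour-$0$ step by an argument in the contrapositive style (or otherwise import the lower-bound combinatorics), rather than appeal to the pigeonhole principle for unbounded sets, which $\RCA_0^*$ does not prove.
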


\begin{proof}
Clearly, it is enough to prove the statement with $2^{k/2}$ substituted for $2^k$.
Working in $\RCA_0^*+\RT^2_2$, let $k$ be such that the infinite set $A$ does not have a subset with $2^{k/2}$ elements. W.l.o.g., we may assume that $0 \in A$. 
We will use the  fact that there is a $2$-colouring of $[2^{k/2}]^2$ with no homogeneous set of size $k$,
which has a well-known probabilistic proof that easily formalizes in $\RCA_0^*$,
to define a $2$-colouring of $[\IN]^2$ such that
every infinite homogeneous sets for the new colouring fails to have a $k$-element subset.

So, let $f\colon [2^{k/2}]^2 \to 2$ have no homogeneous set of size $k$.
Let $\{a_i : i \in I\}$, for some cut $I$, be an increasing enumeration of $A$. Note that $2^{k/2} > I$.
Define $g: [\IN]^2 \to 2$ as follows:
\[g(x,y) =
\begin{cases}
b & \textrm{if there are } i < j \textrm{ s.t. } x \in [a_i, a_{i+1}), y \in [a_j, a_{j+1}), f(i,j) = b, \\
0 & \textrm{if there is } i  \textrm{ s.t. } x,y \in [a_i, a_{i+1}).
\end{cases}
\]

Let $H$ be an infinite $g$-homogeneous set.
By possibly thinning out $H$, we may assume w.l.o.g.~that for each $i \in I$, there is at most one $x \in H \cap [a_i, a_{i+1})$.
This means that for any two $x,y \in H$, the value $g(x,y)$ is determined by the first clause of the definition of $g$.

Let $x_1,\ldots,x_\ell$ be the first $\ell$ elements of $H$, and let $i_1,\ldots,i_\ell$ be such that $x_j \in [a_{i_j}, a_{i_j + 1})$.
By our choice of $H$ and the definition of $g$, we see that $f(i_j,i_{j'}) = g(x_j, x_{j'})$, so the set
$\{i_1,\ldots,i_\ell\}$ is homogeneous for $f$.  So $\ell<k$ by our choice of~$f$.
\end{proof}

Consider the simple variant of the iterated exponential function defined by  $2_0 = 1$ and $2_{n+1} = 2^{2_n}$ for $n\in \IN$.
To prove Theorem \ref{thm:speedup}, we will make use of a family of sentences stating the existence of $2_{2_{2_n}}$, for $n$ a natural number.
The relation $y = 2_x$ has a $\Delta_0$ definition which is well-behaved in $\ind\Delta_0$
 --- see \cite[\S3.1]{art:sharpMcAloon},
or see \cite[Chapt.~V.3(c)]{book:hajek+pudlak} for a similar idea applied to the more difficult case of $y = 2^x$).
 Thus, $\exists y\, (y = 2_{2_{2_n}})$ can be expressed as a $\Sigma_1$ sentence of size $O(\log n)$,
for instance, by substituting the numeral for $n$ into the fixed formula $\exists y\, (y = 2_{2_{2_x}})$.

\begin{lem}\label{lem:short-proof-supexp3}
In $\RCA^*_0 + \RT^2_2$, the sentence $\exists y\, (y = 2_{2_{2_n}})$ has a proof of size polynomial in $n$.
\end{lem}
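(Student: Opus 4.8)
The plan is to distil a closure principle from Lemma~\ref{lem:size-closure} and then amplify it. Write $\phi(k)$ for the fixed-size formula ``every infinite set has a finite subset with $k$ elements''. Lemma~\ref{lem:size-closure} asserts that $\RCA^*_0 + \RT^2_2$ proves the (fixed, $n$-independent) sentence $\fa k{(\phi(k) \then \phi(2^k))}$; fix once and for all a proof of it, and likewise fixed-size proofs of $\phi(1)$ and of the monotonicity facts $\phi(k) \then \phi(k')$ for $k \le k'$ and $\phi(k) \wedge \phi(k') \then \phi(k + k')$. The key technical input is that iterated exponentiation, and the more general relations ``$y$ results from applying $x \mapsto 2^x$ to $k$ exactly $a$ times'', together with suitable nestings of these, have $\Delta_0$ definitions by formulas of fixed size in which the iteration counts appear only as parameters (see \cite[\S3.1]{art:sharpMcAloon} or \cite[Chapt.~V.3(c)]{book:hajek+pudlak}). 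Thus $\phi(2_x)$, and the ``$a$-fold iterate'' $\psi^{[a]} := \fa k{(\phi(k) \then \phi(E^a(k)))}$ with $E^a$ the $a$-fold exponential, are again fixed-size formulas.

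First I would observe that $\phi(2_x)$ is provably closed under successor by a fixed-size proof: since $2_{x+1} = 2^{2_x}$ and since $\phi(2_x)$ already yields that $2_x$ exists (apply it to $\IN$), one instance of Lemma~\ref{lem:size-closure} at the argument $2_x$ gives $\fa x{(\phi(2_x) \then \phi(2_{x+1}))}$; chaining this $n$ times from $\phi(2_0) = \phi(1)$ yields a proof of $\phi(2_n)$, hence of $\ex y{(y = 2_n)}$, of size polynomial in $n$. Next I would iterate Lemma~\ref{lem:size-closure} itself: from $E^{a+b} = E^a \circ E^b$ one gets $\psi^{[a]} \wedge \psi^{[b]} \then \psi^{[a+b]}$ by a fixed-size proof, so ``repeated doubling'' of iterates produces a proof of $\psi^{[c]}$ of size polynomial in the length of the binary numeral $c$; feeding $\phi(1)$ into $\psi^{[c]}$ gives $\phi(2_c)$, and feeding in $2_x$ gives the big-step closure $\fa x{(\phi(2_x) \then \phi(2_{x+c}))}$, again cheaply. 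The proof of the lemma then carries out this amplification through three nested levels, matching the three iterated exponentials in $2_{2_{2_n}}$: the (short) name of the tower produced at one level becomes the data driving the next, and the climb at each level proceeds by chaining steps each of which has a proof polynomial in the data available so far. A routine check confirms that everything is produced by an algorithm running in time polynomial in $n$, so that the resulting proof of $\ex y{(y = 2_{2_{2_n}})}$ has size polynomial in $n$.

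The hard part will be organizing this three-level amplification: the fixed-size ``iterate'' formulas and their closure lemmas must be set up at each level so that no stage ever needs to name a numeral larger than one already shown to exist, and so that passing from one level to the next does not inflate the sizes of the formulas involved --- this is exactly where the uniform $\Delta_0$-definability, with the iteration counts kept as parameters, is used. (A variant, witnessing the speedup by $\Delta_0(\exp)$ rather than $\Sigma_1$ sentences, would replace $\ex y{(y = 2_{2_{2_n}})}$ by an appropriate finite consistency statement and, in addition, invoke the formalized arithmetized completeness theorem; this is sketched in the remark after the proof of Theorem~\ref{thm:speedup}.)
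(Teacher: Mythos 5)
Your opening observations point in the right direction: Lemma~\ref{lem:size-closure} does make $I = \{x : \phi(2_x)\}$ a definable cut closed under successor, and doubling does give $\psi^{[c]}$ with a proof of size polynomial in the length of the numeral $c$. But two substantive mechanisms are missing, and ``three nested levels of amplification'' as you describe them cannot supply them.

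The first gap is the amplification step itself. Repeated doubling is a purely meta-level construction: it produces $\psi^{[c]}$ only for $c$ an explicit numeral, and the largest $c$ with a $\mathrm{poly}(n)$-size numeral is about $2^n$. So this route yields at best $\phi(2_{2^n})$, a tower of height $2^n$ --- far short of $\phi(2_{2_n})$, a tower of height $2_n$. Passing $2_n$ itself in as $c$ does not help: its numeral has length about $2_{n-1}$, and establishing $\psi^{[c]}$ for a model element $c$ rather than a numeral would require inducting on a formula that is far outside $\Delta^0_0$, which is unavailable in $\RCA^*_0 + \RT^2_2$. The paper instead invokes Pudl\'ak's cut-shortening theorem~\cite[Theorem~3.4.1]{incoll:pflen} (essentially Solovay's iterated shortening of cuts): this produces a chain of nested cuts $I = I_0 \supseteq I_1 \supseteq \cdots \supseteq I_n$ together with $\mathrm{poly}(n)$-size proofs that each $I_{j+1}$ is a cut satisfying $\forall k\,(I_{j+1}(k) \to I_j(2^k))$, whence $I(2_n)$, i.e.\ $\phi(2_{2_n})$, has a $\mathrm{poly}(n)$-size proof. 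Each $I_{j+1}$ is a genuinely new formula with a strictly stronger \emph{object-level} closure property than $I_j$; this is not obtainable by meta-level doubling of a single parametrized formula. The second gap is the third exponential. Even granting $\phi(2_{2_n})$, the only use of $\phi$ you make is to apply it to $\IN$, which yields the existence of $2_{2_n}$ --- two exponential levels, not three. The paper's last level comes from the auxiliary set $A = \{\ell : \exists k\,(\ell = 2_k)\}$: one verifies in $\RCA^*_0$ that $A$ is an infinite $\Delta^0_1$-definable set, and that $A$ has a $k$-element subset exactly when $2_k$ exists; hence applying $\phi(m)$ to $A$ instead of $\IN$ produces the existence of $2_m$, one exponential higher. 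Applied to $\phi(2_{2_n})$ this gives the existence of $2_{2_{2_n}}$. Nothing in your sketch plays the role of $A$; note also that the natural candidate $I' = \{k : \phi(2_{2_k})\}$ is \emph{not} a cut with a short proof (closure under successor would need unboundedly many iterations of Lemma~\ref{lem:size-closure}), so you cannot simply run cut-shortening one more level to avoid $A$.
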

\begin{proof}
Consider the definable set
\[I = \{k : \text{every infinite set has a finite subset with $2_k$ elements}\}.\]
In the absence of $\Sigma^0_1$ induction, this is a proper initial segment of $\IN$ and, in general, not~a set (i.e.~a second-order object).
However, Lemma \ref{lem:size-closure} implies that $I$ is a definable cut provably in $\RCA^*_0 + \RT^2_2$.

Consider also the set
\[A = \{\ell: \exists k \, (\ell = 2_k)\}.\]
Provably in $\RCA^*_0$, this is indeed a set,
because the $\exists k$ quantifier can be bounded by $\ell$.
By the totality of the exponential function, $A$ is infinite.
Moreover, since $k \mapsto 2_k$ is an increasing monotone
operation with a $\Delta_0$-definable graph,
$A$ has a subset with $k$ elements exactly for those $k$ for which $2_k$ exists.
It follows that for each $k$, if $I(k)$, then $2_{2_k}$ exists.

Now fix a natural number $n$.
By \cite[Theorem 3.4.1]{incoll:pflen}, there are formulas $I_0, I_1,\ldots, I_n$
such that $I_0$ is $I$ and for each $j<n$,
the theory $\RCA^*_0 + \RT^2_2$ has a $\mathrm{poly}(n)$-size proof
that $I_{j+1}$ is a cut and that $\forall k\,(I_{j+1}(k) \then I_j(2^k))$.
By putting these proofs together, we get a $\mathrm{poly}(n)$-size proof of $I(2_n)$
and hence of the statement that $2_{2_{2_n}}$ exists.
\end{proof}

\begin{lem}\label{lem:no-short-proof-supexp3}
The size of the smallest $\RCA^*_0$ proof of $\exists y\, (y = 2_{2_{2_n}})$
grows faster than any elementary recursive function of $n$.
\end{lem}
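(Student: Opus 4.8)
The plan is to reduce the statement to a bound on the sizes of witnesses for closed existential number-theoretic sentences, and then to exploit a standard feature of theories that, like $\RCA^*_0$, prove the totality of $x\mapsto 2^x$ but of no faster iterated-exponential function. First note that $\exists y\,(y = 2_{2_{2_n}})$ has a \emph{unique} witness, the number $2_{2_{2_n}}$, and that $n\mapsto 2_{2_{2_n}}$ is not dominated by any elementary recursive function: already $n\mapsto 2_n$ eventually exceeds every fixed finite iterate of $x\mapsto 2^x$, while each elementary recursive function is bounded by some such iterate. So it suffices to establish the following quantitative principle: there is an elementary recursive function $E$ such that whenever $\RCA^*_0$ proves a closed $\Sigma_1$ sentence $\exists y\,\delta(y)$ with $\delta$ bounded, by a proof of size $s$, then $\delta$ holds of some number below $E(s+|\delta|)$. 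Granting this, suppose for contradiction that $\exists y\,(y = 2_{2_{2_n}})$ admitted $\RCA^*_0$-proofs of size at most $f(n)$ for some elementary recursive $f$ and all large $n$; since $\delta_n(y):=$ ``$y=2_{2_{2_n}}$'' has size polynomial in $n$, we would get $2_{2_{2_n}}\le E(f(n)+\mathrm{poly}(n))$ for all large $n$, making $n\mapsto 2_{2_{2_n}}$ elementary recursive --- a contradiction.

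To prove the witnessing principle, one passes to the first-order level. The first-order part of $\RCA^*_0$ is $\bd\Sigma_1+\exp$, and (like the reduction of $\RCA_0$ to $\ind\Sigma_1$ recalled in Section~\ref{sec:ramsey}) this reduction is witnessed by an interpretation and therefore is a polynomial simulation with respect to $\Lone$ sentences, so it is enough to prove the principle for $\bd\Sigma_1+\exp$, which has the same $\Pi_2$ consequences as $\ind\Delta_0+\exp$. For that theory the provably total functions are exactly the Kalm\'ar elementary ones, and the standard quantitative form of this fact is what is needed: a Herbrand-style analysis of a size-$s$ proof of a bounded existential $\exists y\,\delta(y)$ yields a propositionally valid disjunction $\delta(t_1)\vee\dots\vee\delta(t_k)$ in which $k$ and the sizes of the terms $t_i$ --- built from $0,S,+,\cdot$, the function $x\mapsto 2^x$, and Skolem functions for the (boundable) existential quantifiers of $\Sigma_1$-collection --- are at most a tower of exponentials of \emph{fixed} height in $s$; evaluating any such $t_i$ then gives a number below a fixed-height tower in $\mathrm{poly}(s)$, and soundness of $\bd\Sigma_1+\exp$ makes one of the $\delta(t_i)$ true. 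Taking $E$ to be this bound establishes the principle; this is precisely the type of reasoning surveyed in \cite{incoll:pflen}.

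The point requiring care --- and what I would regard as the main obstacle --- is keeping the Herbrand blow-up at a tower of \emph{fixed} height rather than one whose height grows with $s$: naively eliminating cuts on formulas of logical depth $d$ costs a tower of height $d$, and a $\bd\Sigma_1+\exp$-proof may nest collection and induction instances to depth comparable to its own size. The resolution, which is where the argument leans on the arithmetic of the theory, is the standard device of first prenexing and Skolemizing the axiom schemes of $\bd\Sigma_1+\exp$: after Skolemization these axioms become a universal theory whose Skolem functions are $\exp$-bounded in their arguments, so only cuts of a complexity bounded by a constant depending on $\bd\Sigma_1+\exp$ remain relevant, and the blow-up is a fixed-height tower; equivalently, one simply invokes the known elementary bound on the witnessing functions of $\bd\Sigma_1+\exp$ as a black box. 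There is also a completely routine alternative, indicated in the remark following the proof of Theorem~\ref{thm:speedup} for the $\Delta_0(\exp)$ variant of the speedup: replace $2_{2_{2_n}}$ by a suitable finite consistency statement and derive the lower bound from G\"odel's second incompleteness theorem together with the short upper bounds of Lemma~\ref{lem:short-proof-supexp3}, at the cost of a little more background on arithmetized provability.
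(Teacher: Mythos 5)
Your overall plan --- reduce to an elementary witnessing bound for closed $\Sigma_1$ sentences, obtain that bound via Herbrand's theorem after Skolemizing a suitable universal axiomatization, and evaluate the resulting terms to get a fixed-height exponential tower --- is exactly the strategy of the paper's proof, and your first reduction (from $\RCA^*_0$ to its first-order part via the standard interpretation) is fine. The ``completely routine alternative'' you mention via finite consistency statements is also essentially the content of the remark that follows the proof of Theorem~\ref{thm:speedup}.

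However, there is a genuine gap in the middle. You propose to Skolemize $\bd\Sigma_1+\exp$ directly and claim that ``after Skolemization these axioms become a universal theory whose Skolem functions are $\exp$-bounded in their arguments.'' This is false for the collection scheme: the Skolem function for the outer $\exists b$ in
\[\forall x{<}a\,\exists y\,\varphi(x,y)\ \then\ \exists b\,\forall x{<}a\,\exists y{<}b\,\varphi(x,y)\]
must majorize \emph{all} the $\Sigma_1$ witnesses $y$ for $x<a$, and nothing in the syntax of the axiom forces this bound to be even primitive recursive, let alone $\exp$-bounded. Put differently, knowing a priori that collection's Skolem function is elementarily bounded is equivalent to the witnessing theorem you are trying to prove, so treating that fact ``as a black box'' is circular at the quantitative level you need (a bound depending elementarily on proof size, not merely the classification of provably total functions).

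The paper sidesteps this precisely by \emph{not} Skolemizing $\bd\Sigma_1$. It first invokes Theorem~\ref{thm:IBWKL} (the $n=0$ case), which gives a \emph{polynomial simulation} of $\WKL^*_0 + \bd\Sigma^0_1 \supseteq \RCA^*_0$ by $\ind\Delta_0+\exp$ with respect to $\Pi_2$ sentences --- this is the nontrivial content of Section~4 (the truncated generic ultrapower forcing interpretation), and it is what lets one pass from an $\RCA^*_0$ proof to an $\ind\Delta_0+\exp$ proof at only polynomial cost. Only then does the Herbrand analysis go through: $\ind\Delta_0+\exp$ can be taken finitely axiomatized, its only unbounded existential to Skolemize is the one in $\forall x\,\exists y\,(y=2^x)$, whose Skolem function \emph{is} exactly $\exp$, and all other Skolem functions (for bounded quantifiers in the finitely many induction axioms) can be chosen $\le$ their arguments. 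That is the step that makes ``each term has value $\le 2_{2_{p(n)}}$'' actually true. You note that $\bd\Sigma_1+\exp$ has the same $\Pi_2$ consequences as $\ind\Delta_0+\exp$, which is the right intermediate fact, but conservativity alone is useless here --- you would need it to hold with elementary (ideally polynomial) blowup of proof size, and that is exactly what Theorem~\ref{thm:IBWKL}/Corollary~\ref{cor:coll-ind} supply. So either import that result explicitly, or replace the claim about $\exp$-bounded Skolem functions for collection by an argument that genuinely establishes it.
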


\begin{proof}
In view of Theorem~\ref{thm:IBWKL} in the next section, it suffices to show that the size of the shortest $\ind\Delta_0 + \exp$ proof of $\exists y\, (y = 2_{2_{2_n}})$ grows nonelementarily in $n$.
To this end, it is clearly enough to show the existence of a polynomial $p$ such that, if the sentence $\exists z\, \varphi(z)$ for a bounded formula
$\varphi$ has a proof in $\ind\Delta_0 + \exp$ of size $n$, then $\exle{z}{2_{2_{p(n)}}}\varphi(z)$ holds in $\omega$.

So, assume that $\ind\Delta_0 + \exp$ has a size-$n$ proof of $\exists z\, \varphi(z)$ for $\varphi$ bounded.
We may assume that $\ind\Delta_0 + \exp$ is finitely axiomatized, because
it has a finite axiomatization that polynomially simulates the more usual
one in which $\Delta_0$ induction is a scheme.
By standard upper bounds on the cut elimination procedure
and its connection to Herbrand's theorem \cite[Section 5]{incoll:pflen},
there is a fixed polynomial $p$ such that some Herbrand disjunction for $\neg (\ind\Delta_0 + \exp) \lor \exists z\, \varphi(z)$ of size at most $2_{p(n)}$
is logically valid.

In more detail, what this means is as follows. Consider the conjunction of:
\begin{itemize}
\item $\forall v_1\, \ldots \forall v_k \, \delta(v_1,\ldots,v_k)$, a statement that induction holds for finitely many fixed $\Delta_0$ formulas with parameters among the $\bar v$;
we may assume w.l.o.g.\ that all the quantifiers in $\delta$ are bounded by one of the $v_i$,
\item $\forall x\, \exists y\, (y=2^x)$, with $y = 2^x$ a $\Delta_0$ formula in the language of first-order arithmetic,
\item $\forall z\, \neg \varphi(z)$.
\end{itemize}
Put this conjunction in prenex normal form and skolemize it, obtaining the sentence $\forall \bar v \, \forall x\, \forall z\, \forall \bar w\, \xi(\bar v, x, z, \bar w)$, where the $\forall \bar w$ quantifiers are bounded and $\xi$ is quantifier-free (but contains the Skolem function symbols corresponding to the original existential quantifiers). Then, for some $s$, there is a list of closed terms
\[t_{v_1,1},\ldots,t_{v_k,1},t_{x,1},t_{z,1},t_{w_1,1},\ldots,t_{w_\ell,1}, \ldots, t_{v_1,s},\ldots,t_{v_k,s},t_{x,s},t_{z,s},t_{w_1,s},\ldots,t_{w_\ell,s}\]
(of the arithmetical language extended by the Skolem functions) such that the conjunction
\[\Xi \defeq \bigwwedge_{i=1}^s\xi(t_{\bar v,i},t_{x,i},t_{z,i},t_{\bar w,i})\]
has size at most $2_{p(n)}$ and is unsatisfiable.

We now interpret the function symbols appearing in $\Xi$ as operations on $\omega$.
The symbols $+,\cdot,0,1$ are interpreted in the usual way.
If $f(\ldots,x,\ldots)$ is the Skolem function symbol corresponding to the $\exists y$ quantifier
($f$ may have more arguments as a result of the move to prenex normal form),
then we put $f(\ldots,m,\ldots) = 2^m$.
If $f$ is the Skolem function symbol corresponding to a bounded existential quantifier $\exists u$
in front of a subformula $\psi$ of $\delta$, $\neg \varphi$, or $y = 2^x$,
we let $f$ pick out the smallest witness to $\exists u \, \psi$ on those arguments for which there is such a witness;
otherwise, $f$ returns $0$.

Since each function interpreting a symbol in $\Xi$ increases its arguments at most exponentially,
and $\Xi$ has size at most $2_{p(n)}$, it follows that under our interpretation
each term in $\Xi$ has a value bounded by $2_{2_{p(n)}}$.
Since we interpreted the Skolem function symbols as actual Skolem functions, and $\ind \Delta_0 + \exp$ is a true theory,
each substitution instance of the skolemizations of $\forall v_1\, \ldots \forall v_k \, \delta(v_1,\ldots,v_k)$ and $\forall x\, \exists y\, (y=2^x)$
appearing in $\Xi$ is true under our interpretation.
However, $\Xi$ is unsatisfiable, which implies that some substitution instance of the skolemization of $\forall z\, \neg \varphi(z)$ must be false.
Due to the way we defined our interpretation, this means that $\exle{z}{2_{2_{p(n)}}}\varphi(z)$ holds.
\end{proof}

\begin{proof}[Proof of Theorem \ref{thm:speedup}]
Immediate from Lemmas \ref{lem:short-proof-supexp3} and \ref{lem:no-short-proof-supexp3}.
\end{proof}

\begin{remark}
As already mentioned, it is possible to witness the speedup of $\RCA^*_0 + \RT^2_2$ over $\RCA^*_0$ by a family of $\Delta_0(\exp)$ sentences.
The sentences in question take the form $\Con_{2_n}(\ind\Delta_0 + \exp)$, where $\Con_x(T)$ is a formula stating in a natural way that there is no proof of contradiction in the theory $T$ of size at most $x$. Clearly, $\Con_{2_n}(\ind\Delta_0 + \exp)$ can be expressed by a $\Delta_0(\exp)$ sentence of size polynomial in $n$.

The so-called finitistic G\"odel theorem, proved independently by Friedman and Pudl\'ak (see \cite[Theorem 6.3.2]{incoll:pflen} and the references therein), implies that for a sufficiently strong finitely axiomatized $T$, the size of the smallest proof of $\Con_{n}(T)$ in $T$ is $n^{\Omega(1)}$. As a consequence, the size of the smallest proof of $\Con_{2_n}(\ind\Delta_0 + \exp)$ in $\ind\Delta_0 + \exp$ is $(2_n)^{\Omega(1)}$.
By Theorem~\ref{thm:IBWKL},
a proof of this sentence in $\RCA^*_0$ also requires size  $(2_n)^{\Omega(1)}$.

On the other hand, essentially by formalizing the proof of Lemma \ref{lem:no-short-proof-supexp3} but with $\forall z\, \neg \varphi(z)$ replaced by $0=0$, one can prove in $\ind\Delta_0 + \exp$ that, for a certain fixed polynomial $p$, the existence of $2_{2_{p(x)}}$ implies $\Con_x(\ind\Delta_0 + \exp)$. By Lemma~\ref{lem:short-proof-supexp3}, $\RCA^*_0 + \RT^2_2$ can prove the existence of $2_{2_{2_{p(n)}}}$, a number greater than
$2_{2_{p(2_n)}}$, in size $\mathrm{poly}(n)$. Thus, it can also prove $\Con_{2_n}(\ind\Delta_0 + \exp)$ in size $\mathrm{poly}(n)$.
\end{remark}

\section{Induction versus collection}%

Since $\RT^2_2$ proves the collection principle $\bd\Sigma_2$,  Corollary \ref{cor:simulation-rt22-is1}
implies that $\ind\Sigma_1$ polynomially simulates $\bd\Sigma_2$ w.r.t.~proofs of $\Pi_3$ sentences.
In this context, it is natural to ask whether the $\Pi_{n+2}$-conservativity of $\bd\Sigma_{n+1}$ over
$\ind\Sigma_n$ is also witnessed by a polynomial simulation for $n \neq 1$. A related problem
was raised by Clote et al.~\cite[page 216]{art:formconserv}: is the conservativity provable in bounded arithmetic?
Note that if a $\Pi_2$ sentence of the form $\forall x\, \exists y \, \delta(x,y)$,
such as a conservativity statement, is provable in bounded
arithmetic, then the least witness for the $\exists y$ quantifier can be
bounded by a polynomial-time function of $x$.

In this section, we present, for each $n\in\IN$, a polynomial forcing interpretation~$\tau(n)$
of $\bd\Sigma_{n+1}+\exp$ in $\ind\Sigma_n+\exp$ that is polynomially $\Pi_{n+2}$-reflecting.
The existence of these forcing interpretations, of course, implies that
$\ind\Sigma_n$ polynomially simulates $\bd\Sigma_{n+1}$ w.r.t.~$\Pi_{n+2}$ sentences
whenever $n \ge 1$; for $n = 0$, we get
a polynomial simulation of $\bd\Sigma_1 + \exp$ by $\ind\Delta_0 + \exp$.
Our interpretations can be formalized in (a fragment of)
bounded arithmetic, which partially solves the problem from~\cite{art:formconserv} as well:
completely for $n \ge 1$, and over $\exp$ for $n = 0$.

The only role of $\exp$ is to provide us with the usual universal $\Sigma_{n+1}$ formula
$\Sigma_{n+1}\hyp\Sat(v,x)$. Recall that $\ind\Delta_0+\exp$ proves $\theta(\bar x) \nsc \Sigma_{n+1}\hyp\Sat(\nmrl{\theta},\langle\bar x\rangle)$ for $\theta \in \Sigma_{n+1}$,
where $\nmrl{\theta}$ denotes the numeral for (the code/G\"odel number of) 
$\theta$ and $\langle \bar x\rangle$ denotes the code for the finite sequence $\langle x_{1},\dots,x_{l}\rangle$.
It also proves that $\Sigma_{n+1}\hyp\Sat(v,x)$ satisfies the usual recursive conditions for a satisfaction relation restricted to $\Sigma_{n+1}$ formulas;
cf.~\cite[Chapts.~I.1(d), V.5(b)]{book:hajek+pudlak}.

\begin{remark}
The question from~\cite{art:formconserv} as stated concerned provability in $\ind\Delta_0 + \Omega_1$,
which coincides with the $\Lone$-consequences of Buss' theory $S_2$.
The related Question 35 from the Clote-Kraj\'i\v cek list of open questions~\cite{incoll:openprob} ---
whether the stronger theory $\ind\Delta_0 + \exp$ proves the mere
equiconsistency of $\bd\Sigma_{n+1}$ and $\ind\Sigma_n$ ---
was already answered positively in H\'ajek's paper~\cite[Section~2]{incoll:hajek/interpret}
in the same volume where the list appeared.
\end{remark}

\begin{remark}
While we were writing up our paper, we learnt that
 Fedor Pakhomov [private communication]
 independently proved that $\ind\Sigma_n$ polynomially simulates $\bd\Sigma_{n+1}$ w.r.t.~$\Pi_{n+2}$
 (and that bounded arithmetic proves this)
 by a different argument that avoids the need for $\exp$ in the case $n=0$.
Moreover, in an email discussion with us,
 Pakhomov found a parametric interpretation
  of $\bd\Sigma_{n+1}+\exp+\neg\sigma$
  in $\ind\Sigma_n+\exp+\neg\sigma$
   for each $\Pi_{n+2}$~sentence $\sigma$, at least for $n>0$.
It would be interesting to know
 whether his interpretation can be made independent
  of the $\Pi_{n+2}$~sentence in question, and
 whether the use of parameters can be avoided.
\end{remark}

The model-theoretic idea behind our forcing interpretations is simple
and appeared implicitly already in~\cite[Lemma~12]{art:formconserv}:
given $M\models\ind\Sigma_n+\exp$,
 if we construct a non-cofinal extension $K\elemext_{\Sigma_{n+1}}M$,
   then $M\elemsub_{\Sigma_{n+1}}\sup_KM\models\bd\Sigma_{n+1}+\exp$.
If the aim is only to show the $\Pi_{n+2}$-conservativity
 of $\bd\Sigma_{n+1}+\exp$ over $\ind\Sigma_n+\exp$,
  one can build the extension~$K$ by any method,
   but to get a forcing interpretation a forcing construction is needed.
The forcing construction we choose is that of
  a generic $\Sigma_{n+1}$ ultrapower~\cite[pages~181f.]{art:subarith-ultrapower};
   cf.~the proof of Theorem~B in~\cite{Paris-Kirby}.
In particular, the construction employs a maximal filter~$\VF$
  in the lattice consisting of the $\Sigma_{n+1}$-definable subsets of $M$.
Elements of the $\Sigma_{n+1}$ ultrapower~$K$ are represented by
  $\Sigma_{n+1}$-definable partial functions $M\parto M$
   whose domain is an element of~$\VF$.
To ensure that the ultrapower is a non-cofinal extension,
 we require every element of~$\VF$ to be cofinal in~$M$.
It would be possible to first define a forcing relation $\forces_{\tau(n)}'$ for the ultrapower~$K$
    and then define a second forcing relation $\forces_{\tau(n)}$
     for the truncation~$\sup_KM$ in terms of $\forces_{\tau(n)}'$.
To save some notation, we introduce the second one directly.

An alternative forcing interpretation, much closer in spirit to the one used to prove Theorem \ref{thm:simulation-rt22-rca0}
and therefore perhaps easier to follow for the reader acquainted with Section \ref{sec:ramsey},
is mentioned in a remark after the proof of Theorem~\ref{thm:IBfint}.
However, that interpretation only works for $n > 0$.

In the remainder of this section, $\exists^\infty x$ (``for infinitely many $x$'' or ``for cofinally many $x$'')
stands for $\forall y\, \exists x{\ge}y$ and $\forall^\infty x$ (``for almost all $x$'') stands for $\exists y\, \forall x{\ge}y$.

For $n \in \IN$, we will define a forcing translation~\defm{$\tau(n)$}
from the language of first-order arithmetic to itself. The definition of the set
of conditions is relatively straightforward.

\begin{defn}
Let $n\in\IN$, and let $x\in\Ext_{n+1}(s)$ stand for $\Sigma_{n+1}\hyp\Sat(s,\langle x\rangle)$.
Then a (code for a) $\Sigma_{n+1}$ formula with one free variable $s$ is in $\Cond[\tau(n)]$ if $\excf x{(x\in\Ext_{n+1}(s))}$. 
For two conditions $s, s'$, we let $s'\extendseq_{\tau(n)}s$ if $\Ext_{n+1}(s')\subseteq\Ext_{n+1}(s)$.
\end{defn}

%

In other words, conditions are infinite $\Sigma_{n+1}$-definable sets. 
Our names will be (codes for) $\Sigma_{n+1}$ formulas with two variables. 
The definition of $s\forces_{\tau(n)}v\defd$ is somewhat subtle,
but the idea is that names are to be viewed as 
$\Sigma_{n+1}$-definable functions whose ranges are bounded.
To capture this intuition, we introduce some auxiliary concepts.
We will write $\val v x\defd$ if $\exi y{\Sigma_{n+1}\hyp\Sat(v,\langle x,y\rangle)}$,
and simply write $\val v x$ for the unique such $y$.
We write $s\forcesp_{\tau(n)} v\defd$ if there exists some $d$ such that
$\fain x{\Ext_{n+1}(s)}{(\val v x\defd \land \val v x \le d)}$.
Note that if $v_1,\ldots,v_\ell$ are names, then under the assumption $\val {v_1} x\defd,\ldots,\val {v_\ell} x \defd$
the assertion $\theta(\val {\bar v} x)$ can be formulated either as 
\[\ex{y_1,y_2,\dots,y_\ell}{\Bigl(
    \bigwwedge_{i=1}^\ell \Sigma_{n+1}\hyp\Sat(v_i,\langle x,y_i\rangle)
    \wedge\theta(\bar y)
   \Bigr)}\]
or as
\[\fa{y_1,y_2,\dots,y_\ell}{\Bigl(
    \bigwwedge_{i=1}^\ell \Sigma_{n+1}\hyp\Sat(v_i,\langle x,y_i\rangle)
    \to\theta(\bar y)
   \Bigr)}.\] 
For $\theta \in \Sigma_n \cup \Pi_n$, these statements
are $\Sigma_{n+1}$ and $\Pi_{n+1}$, respectively. 
Below, we sometimes write $\bar v(x) \defd$ for the conjunction
of $\val {v_1} x\defd,\ldots,\val {v_\ell} x \defd$.

The actual definition of $s\forces_{\tau(n)}v\defd$ 
says that $s'\forcesp_{\tau(n)} v\defd$
happens densely below $s$.

\begin{defn}\label{defn:name-defd}
For a condition $s$ and a name $v$,
we say that $s\forces_{\tau(n)}v\defd$ if the following hold:
\begin{enumerate}[(i)]
\item $\aallin x{\Ext_{n+1}(s)}{\exists^{\le 1} y\, {\Sigma_{n+1}\hyp\Sat(v,\langle x,y\rangle)}}$,
\item $\exists d\, \aallin x{\Ext_{n+1}(s)}{\forall y\, {(\Sigma_{n+1}\hyp\Sat(v,\langle x,y\rangle) \then y \le d)}}$,
\item there is no $s' \extendseq_{\tau(n)}s$ such that $\aallin x{\Ext_{n+1}(s')}{\forall y\,\neg{\Sigma_{n+1}\hyp\Sat(v,\langle x,y\rangle)}}$.
\end{enumerate}


For a simple atomic formula $\alpha(v_1,\ldots,v_\ell)$, we say that
$s\forces_{\tau(n)}\alpha(\bar v)$ if $s\forces_{\tau(n)}\bar v\defd$ and
\[\aallin x{\Ext_{n+1}(s)}{{\Bigl(
    \bigwwedge_{i=1}^\ell v_i(x)\defd \then \alpha(v_1(x),\ldots,v_\ell(x))
   \Bigr)}}.\]

When there is no risk of ambiguity,
 we will often omit the subscript $\tau(n)$.
\end{defn}   


\begin{lem}\label{lem:ultrapower-fint}
For each $n  \in \IN$, the relations $\Cond_{\tau(n)}, \extendseq_{\tau(n)}, \forces_{\tau(n)}$ determine a forcing interpretation
of $\Lone$ in $\ind\Sigma_n + \exp$. Moreover, there is a polynomial-time procedure which, given $n\in\IN$ in unary and
an instance of \ref{item:fint/ex_Cond}--\ref{item:fint/density} for $\tau(n)$, outputs a proof of that instance in $\ind\Sigma_n + \exp$.
\end{lem}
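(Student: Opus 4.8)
The plan is to run through the conditions (FI0)--(FI12) from the definition of a forcing interpretation of $\Lone$ one at a time, checking at each point that the required $\ind\Sigma_n+\exp$-proof is produced by a single polynomial-time algorithm uniform in the unary datum $n$. First I would attend to the bookkeeping needed to make $\Cond_{\tau(n)}$, $\extendseq_{\tau(n)}$, $\forces_{\tau(n)}$ an honest forcing translation satisfying (FT1)--(FT4): the formulas displayed in Definition~\ref{defn:name-defd} are padded in the obvious way (so that $s'\extendseq_{\tau(n)}s$ is read as $s\in\Cond_{\tau(n)}\wedge s'\in\Cond_{\tau(n)}\wedge\Ext_{n+1}(s')\subseteq\Ext_{n+1}(s)$, and $s\forces_{\tau(n)}\alpha(\bar v)$ has $s\forces_{\tau(n)}\bar v\defd$ conjoined), while the substitution-commutation clause (FT4) is immediate because a name occurs in these formulas only through the fixed atom $\Sigma_{n+1}\hyp\Sat(v_i,\langle x,y_i\rangle)$. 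Then (FI0) holds by taking $s$ to code the formula ``$x=x$'', whose $\Ext_{n+1}$ is the whole universe and hence cofinal; (FI1)--(FI2) are reflexivity and transitivity of $\subseteq$; (FI3) is witnessed, below any condition $s$, by the constant name coded by ``$y=0$'', which is everywhere single-valued, bounded by $0$ and nowhere undefined, so that clauses (i)--(iii) of Definition~\ref{defn:name-defd} hold trivially; the monotonicity clauses (FI4)--(FI5) come from the fact that ``for almost all $x\in\Ext_{n+1}(s)$'' is preserved on passing to the infinite subset $\Ext_{n+1}(s')$, together with transitivity of $\extendseq_{\tau(n)}$ for clause (iii); and the equality clauses (FI6)--(FI8) are immediate from the definition of $s\forces_{\tau(n)}\alpha(\bar v)$ and reflexivity, symmetry and transitivity of genuine equality applied pointwise to the values $v_i(x)$.

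For (FI9) and (FI10) I would use composition of names. Given a function symbol $f$ of $\Lone$ and names $\bar v$ with $s\forces_{\tau(n)}\bar v\defd$, fix any $s'\extendseq_{\tau(n)}s$; by (FI4) we still have $s'\forces_{\tau(n)}\bar v\defd$, so the $\Sigma_{n+1}$-definable set $s''$ of those $x\in\Ext_{n+1}(s')$ at which every $v_i$ is defined and single-valued is cofinite in $\Ext_{n+1}(s')$, hence a condition, and on it the name $w$ coded by $\exists y_1\dots\exists y_k\bigl(\bigwwedge_i\Sigma_{n+1}\hyp\Sat(v_i,\langle x,y_i\rangle)\wedge y=f(\bar y)\bigr)$ is everywhere defined, single-valued and bounded (a uniform bound on the $v_i$ yields one on $f(\bar v)$ since the function symbols of $\Lone$ have at most polynomial growth), so $s''\forces_{\tau(n)}w=f(\bar v)$; the uniqueness half of (FI9) unwinds from clauses~\ref{item:ftr/forall} and~\ref{item:ftr/then} using single-valuedness of $f(\bar v(x))$. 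Clause (FI10) follows the same pattern: from $s\forces_{\tau(n)}w=t(\bar v)$ one gets $w(x)=t(v_1(x),\dots)$ for almost all $x\in\Ext_{n+1}(s)$, and substituting this equality turns the pointwise condition defining $s\forces_{\tau(n)}\alpha(\bar u,w)$ into the one defining $s\forces_{\tau(n)}\alpha(\bar u,t(\bar v))$.

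The core of the argument is the pair of density clauses (FI11) and (FI12), which is where the genericity built into clause (iii) of Definition~\ref{defn:name-defd} does its work and where the arithmetical complexity has to be watched. The basic move is: if some ``bad'' property holds densely below $s$, form the set $B$ of $x\in\Ext_{n+1}(s)$ witnessing the \emph{failure} of the property; using boundedness of names and the $\Sigma_{n+1}/\Pi_{n+1}$ reformulations recorded before Definition~\ref{defn:name-defd}, $B$ is $\Sigma_{n+1}$-definable, so if $B$ were infinite it would be a condition $\extendseq_{\tau(n)}s$ below which nothing could force the property --- contradicting the density hypothesis --- whence $B$ is finite and the property holds at $s$. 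This disposes at once of (FI12) for simple atomic formulas, of the single-valuedness clause (i) and the density clause (iii) in the definition of $s\forces_{\tau(n)}v\defd$, and (together with the instance of (FI11) for the relevant names) of the ``$\bar v$ defined'' part of (FI12).

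I expect the main obstacle to be the boundedness clause (ii) of Definition~\ref{defn:name-defd}. Here the basic move does not suffice; instead one wants the dichotomy --- provable for a $\Sigma_{n+1}$-definable partial function $v$ and an infinite $\Sigma_{n+1}$-definable set $E$ --- that either $v$ is bounded on some infinite $\Sigma_{n+1}$-definable subset of $E$, or else $v$ exceeds every fixed bound almost everywhere on $E$, hence on every infinite $\Sigma_{n+1}$-definable subset of $E$. Given this, the work is in passing from an \emph{unbounded} $v$ to an infinite $\Sigma_{n+1}$-definable subcondition on which $v$ is ``robustly'' unbounded, so that the density hypothesis can be contradicted there; the point is that the naive choice (a record-value subsequence) is only $B(\Sigma_{n+1})$-definable, so it must be replaced by an $\exp$-coded finitary approximation, much as in the $\Sigma_{n+1}$-ultrapower constructions of~\cite{art:subarith-ultrapower,Paris-Kirby}. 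The remaining, purely routine, task is to verify that every proof produced above has size polynomial in $n$ and in the size of the function symbol, term or formula labelling the instance, and is produced by one uniform polynomial-time procedure; this is immediate once one observes that $\tau(n)$ is itself given by a fixed algorithm in $n$ and that each verification above is uniform and local.
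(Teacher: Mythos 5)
Your outline matches the paper's: the paper also treats everything except~\ref{item:fint/densedefd} and~\ref{item:fint/density} as routine and concentrates on those, with the boundedness clause (ii) of Definition~\ref{defn:name-defd} being the genuinely delicate point, and you have correctly identified both where the difficulty sits and what resolves it (the record-value subsequence is not $\Sigma_{n+1}$-definable without $\bd\Sigma_{n+1}$, so one instead codes finite initial segments of that sequence). This is precisely the device the paper uses: it introduces the set $H$ of finite sequences $(x_i,y_i,w_i)_{i\le k}$ in which each $\langle x_i,y_i,w_i\rangle$ is the least triple with $w_i$ a witness for the outer existential block of ``$x_i\in\Ext_{n+1}(s')\wedge\Sigma_{n+1}\hyp\Sat(v,\langle x_i,y_i\rangle)$'' and $x_i,y_i>\langle x_j,y_j,w_j\rangle$ for all $j<i$; membership in $H$ is $\Sigma_0(\Sigma_n)$ (indeed $\Sigma_n\cap\Pi_n$ given $\bd\Sigma_n$), $H$ has no maximal element by the failure of (ii), $\ind\Sigma_n$ shows $H$ is cofinal, and the set of endpoints $x_k$ is a $\Sigma_{n+1}$-definable condition on which $v$ is ``robustly'' unbounded, so every refinement violates (ii).

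Two caveats. First, your ``basic move'' (project the failure set to a $\Sigma_{n+1}$-definable infinite subcondition) handles clause (i) and~\ref{item:fint/density}, but not really clause (iii), whose failure already hands you the blocking condition directly --- the set of $x$ with $v$ undefined is $\Pi_{n+1}$, not $\Sigma_{n+1}$, so the projection trick would not apply there. Second, in your treatment of~\ref{item:fint/functions} the claim that $\{x\in\Ext_{n+1}(s'):\bar v(x)\defd\}$ is \emph{cofinite} in $\Ext_{n+1}(s')$ is false: clause (iii) only rules out a $\Sigma_{n+1}$-definable subcondition on which $v_i$ is almost everywhere undefined, and the complement of $\{x:v_i(x)\defd\}$ is $\Pi_{n+1}$, so it may well be infinite. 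The set in question is still infinite (iterate: $\{x:v_1(x)\defd\}$ is a condition by (iii) for $v_1$, then intersect with $\{x:v_2(x)\defd\}$ and apply (iii) for $v_2$, and so on), but the word ``cofinite'' is an overclaim; alternatively, one can sidestep the issue entirely by defining the name for $f(\bar v)$ with a fallback value on inputs where some $v_i$ is undefined, so that it is defined everywhere on $\Ext_{n+1}(s')$ and the intersection is not needed. Neither caveat is a structural obstacle, but as written the $H$-construction is left as a gesture and the cofiniteness step does not go through as stated.
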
 

In this and later similar statements on the existence of polynomial-time procedures that output some proofs, 
the dependence of the running time on~$n$ rather than just on the formula to be proved cannot be avoided:
it comes for instance 
from the universal $\Sigma_{n+1}$~formula,
which is already used in the definition of $\Ext_{n+1}$, and thus of $\Cond[\tau(n)]$ and $\forces_{\tau(n)}$.

\begin{proof}
The only conditions that are not completely trivial to prove are \ref{item:fint/densedefd} and \ref{item:fint/density}.
Below, we think of $n \in \IN$ as fixed and explain how to prove \ref{item:fint/densedefd} and \ref{item:fint/density}
for $\tau(n)$ in $\ind \Sigma_n+\exp$. The verification that the proofs can be constructed in polynomial time is left to the reader.

Condition
\ref{item:fint/densedefd} states that if $s'\forces v\defd$ happens densely below $s$, then in fact $s\forces v\defd$. So assume that $s\not\forces v\defd$. If item (iii) from the definition of $s\forces v\defd$ is violated, this immediately gives some $s' \extendseq s$ such that $v(x)$ is undefined for each $x \in \Ext_{n+1}(s')$, so clearly $s'' \not \forces v\defd$ whenever
$s'' \extendseq s'$. If (i) is violated, let $s'$ be a (code for a) formula such that $x \in \Ext_{n+1}(s')$ iff 
$x \in \Ext_{n+1}(s') \land \exists^{\ge 2} y\, {\Sigma_{n+1}\hyp\Sat(v,\langle x,y\rangle)}$. Then $s' \in \Cond$, 
and any $s'' \extendseq s'$ violates (i) with respect to $v$. 
Finally, if (ii) is violated but (i) holds, first let $b$ be such that $\exists^{\le 1} y\, {\Sigma_{n+1}\hyp\Sat(v,\langle x,y\rangle)}$ holds for $x \in \Ext_{n+1}(s)$ with $x > b$. Let $s' \extendseq s$ be such that $x \in \Ext_{n+1}(s')$ iff $x \in \Ext_{n+1}(s) \land x > b$. 
The failure of (ii) means that for each $d$, there exists $x \in \Ext_{n+1}(s')$
with $x > d$ and $v(x)\defd \land v(x) > d$.
Let $H$ be the set of all finite sequences of the form $(x_i,y_i,w_i)_{i \le k}$ such that
for each $i \le k$, the triple $\langle x_i, y_i, w_i \rangle$ is the smallest one satisfying:
\begin{itemize}
\item $w_i$ is a witness for (the outermost existential quantifier block of) the $\Sigma_{n+1}$ statement $x_i \in \Ext_{n+1}(s') \land \Sigma_{n+1}\hyp\Sat(v,\langle x_i,y_i\rangle)$, 
\item $x_i > \langle x_j, y_j, w_j \rangle$ for each $j < i$,
\item $y_i > \langle x_j, y_j, w_j \rangle$ for each $j < i$.
\end{itemize}
It can be observed that the set $H$ is $\Sigma_{0}(\Sigma_{n})$-definable; in fact, in the presence
of $\bd\Sigma_n$ it can be defined by the conjunction of a $\Pi_n$ and a $\Sigma_n$ formula. 
Moreover, $H$ has no maximal element, by our assumption on the failure of (ii).
Thus, $H$ is cofinal by $\ind\Sigma_{n}$.
Now define $s''$ so that $x \in \Ext_{n+1}(s'')$ iff there is a finite sequence $(x_i,y_i,w_i)_{i \le k}$ in $H$ such that $x=x_{k}$.
Then $s''\in\Cond$, $s'' \extendseq s'$, and for each $d$, if $x \in \Ext_{n+1}(s'')$ and $x \ge d$,
then $v(x') > d$ for each $x' \in \Ext_{n+1}(s'')$ with $x' > x$. So, any $s''' \extendseq s''$ violates (ii) with respect to $v$.

Once \ref{item:fint/densedefd} is shown, the proof of \ref{item:fint/density} becomes simple.
If $s \not \forces \alpha(\bar v)$, then there are two cases to consider. Either $s \not \forces \bar v \defd$,
and then by the above argument $s' \forces \bar v \defd$ does not occur densely below $s$.
Otherwise, we have
\[\excfin x{\Ext_{n+1}(s)}{{\Bigl(
    \bigwwedge_{i=1}^\ell v_i(x)\defd \land \neg\alpha(v_1(x),\ldots,v_\ell(x))
   \Bigr)}}.\]
Let $b$ be such that $\exists^{\le 1} y\, {\Sigma_{n+1}\hyp\Sat(v,\langle x,y\rangle)}$ holds for $x \in \Ext_{n+1}(s)$ with $x > b$. Let $s' \extendseq s$ be such that $x \in \Ext_{n+1}(s')$ iff $x \in \Ext_{n+1}(s) \land x > b$.
Let $s'' \extendseq s'$ be such that 
\[x \in \Ext_{n+1}(s'') \nsc x \in \Ext_{n+1}(s') \land  
    \bigwwedge_{i=1}^\ell v_i(x)\defd \land \neg\alpha(v_1(x),\ldots,v_\ell(x)).\]
Then $s''' \not \forces \alpha(\bar v)$ whenever $s''' \extendseq s''$.    
\end{proof} 

\begin{lem}\label{lem:weak-strong-defd}
For any $s\in\Cond$ and names $v_{1}\dots, v_{\ell}$ such that $s\forces \bar v\defd$, there exists $\hat s\extendseq s$ such that: 
\begin{itemize}
\item $\hat s\forcesp \bar v$, 
\item $\aallin x{\Ext_{n+1}(s)}{(\bar v(x)\defd\to x\in\Wx{\hat s})}$,
\item $s\forces\theta(\bar v)\nsc \hat s\forces\theta(\bar v)$ for any $\lang_{1}$-formula $\theta$.
\end{itemize}
\end{lem}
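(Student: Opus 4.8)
The plan is to let $\hat s$ be (a code for) the set
\[
  \Wx{\hat s}\ :=\ \{\,x\in\Wx s : x>b \ \text{ and } \ v_1(x)\defd,\ \dots,\ v_\ell(x)\defd\,\},
\]
where $b$ and $d$ are chosen, using clauses~(i) and~(ii) of Definition~\ref{defn:name-defd} for $v_1,\dots,v_\ell$ (there are only finitely many, so we may take $b,d$ to work for all of them simultaneously), so that for every $x\in\Wx s$ with $x>b$ each $v_i$ is single-valued at $x$ and any value of $v_i$ at $x$ is $\le d$. Granting that $\hat s$ is a condition, $\hat s\extendseq s$ holds by definition, since $\Wx{\hat s}\subseteq\Wx s$.

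The one nontrivial point, and the step I expect to be the main obstacle, is the following combinatorial claim: for \emph{every} $s'\extendseq s$, the set $\{\,x\in\Wx{s'}:x>b\ \text{and}\ v_1(x)\defd,\dots,v_\ell(x)\defd\,\}$ is unbounded, hence is (a code for) a condition $\extendseq s'$. I would prove this by a short induction on $k\le\ell$, showing that $F_k:=\{\,x\in\Wx{s'}:x>b\ \text{and}\ v_1(x)\defd,\dots,v_k(x)\defd\,\}$ is unbounded; the base case $F_0$ is $\Wx{s'}$ intersected with $\{x>b\}$, which is unbounded because $s'\in\Cond$. For the inductive step, if $F_k$ were bounded by some $c$, then $\{x\in F_{k-1}:x>c\}$ would be (a code for) a condition extending $s$ on which $v_k$ is nowhere defined; since each of its elements exceeds $b$, being nowhere defined here means $\forall y\,\neg\Sigma_{n+1}\hyp\Sat(v_k,\langle x,y\rangle)$, contradicting clause~(iii) in the definition of $s\forces v_k\defd$. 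This is exactly where condition~(iii) of Definition~\ref{defn:name-defd} does its work. The only background point to check is that $F_k$ is again equivalent to a $\Sigma_{n+1}$ formula, which is routine given the properties of $\Sigma_{n+1}\hyp\Sat$ recalled at the start of the section and is available in $\ind\Sigma_n+\exp$.

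Assuming the claim, the three bullets follow with little further work. Applying it with $s'=s$ shows that $\hat s$ is a condition, and hence $\hat s\extendseq s$. The first bullet, $\hat s\forcesp\bar v$ (i.e.\ $\hat s\forcesp v_i\defd$ for each $i$), is immediate from the construction: every $x\in\Wx{\hat s}$ satisfies $v_i(x)\defd$ and $v_i(x)\le d$. The second bullet holds since, for $x\in\Wx s$ with $x>b$, we have $\bar v(x)\defd$ if and only if $x\in\Wx{\hat s}$, and ``for all $x>b$'' is ``for almost all $x$''. For the third bullet, the implication $s\forces\theta(\bar v)\then\hat s\forces\theta(\bar v)$ is just the monotonicity part of Lemma~\ref{lem:monodense}, since $\hat s\extendseq s$. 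For the converse I would invoke the density part of Lemma~\ref{lem:monodense}: it suffices to show that $s'\forces\theta(\bar v)$ holds densely below $s$, so fix $s'\extendseq s$ and let $s''$ be the condition supplied by the claim, with $\Wx{s''}=\{x\in\Wx{s'}:x>b\ \text{and}\ \bar v(x)\defd\}$. Then $s''\extendseq s'$ and, since $\Wx{s'}\subseteq\Wx s$, also $s''\extendseq\hat s$; monotonicity together with $\hat s\forces\theta(\bar v)$ then yields $s''\forces\theta(\bar v)$, as required. All of this is carried out in $\ind\Sigma_n+\exp$.
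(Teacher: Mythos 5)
Your proof is correct and follows essentially the same route as the paper: define $\hat s$ by intersecting $\Wx{s}$ with $\{x>b\}$ and $\{\bar v(x)\defd\}$, derive the first two bullets directly, and use monotonicity for one direction of the third bullet and density (Lemma~\ref{lem:monodense}\eqref{part:monodense/dense}) for the other by cutting $s'$ down in the same way. The only difference is that you spell out, via the $F_k$ induction appealing to clause~(iii) of Definition~\ref{defn:name-defd}, why $\hat s$ and the auxiliary $s''$ are actually conditions --- a point the paper leaves implicit --- which is a welcome addition rather than a divergence.
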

\begin{proof}
Take any $s\in\Cond$ and $\bar v$
such that $s\forces\bar v\defd$.
By (i) and (ii) of the definition of $s\forces\bar v\defd$, let $b,d$ be such that 
for each $x \in \Ext_{n+1}(s)$ with $x > b$ and each $i=1,\dots,\ell$,
\[\exists^{\le 1} y\, {\Sigma_{n+1}\hyp\Sat(v_{i},\langle x,y\rangle)} \land 
\forall y\, (\Sigma_{n+1}\hyp\Sat(v_{i},\langle x,y\rangle) \then y \le d)\] holds.
Let $\hat s\extendseq s$ be such that $x\in\Wx{\hat s}\nsc x\in\Wx{s}\wedge x>b\wedge \bar v(x)\defd$.
Then we have $\aallin x{\Ext_{n+1}(s)}{(\bar v(x)\defd\to x\in\Wx{\hat s})}$.

We show $s\forces\theta(\bar v)\nsc \hat s\forces\theta(\bar v)$ for any $\lang_{1}$-formula $\theta$.
The implication from left to right is immediate from Lemma~\ref{lem:monodense}. 
To show the converse implication, 
assume that $\hat s\forces\theta(\bar v)$ for an $\lang_{1}$-formula $\theta$.
By Lemma~\ref{lem:monodense}, it is enough to show that $\theta(\bar v)$ is forced dense below $s$.
Let $s'\extendseq s$. Take $s''\extendseq s'$ so that $x\in\Wx{s''}\nsc x\in\Wx{s'}\wedge x>b\wedge \bar v(x)\defd$.
Then, $s''\extendseq \hat s$, and hence $s''\forces\theta(\bar v)$.
\end{proof}


It is known that $\Sigma_{n+1}$~ultrapowers of models of~$\ind\Sigma_n$
 satisfy \L o\'s's Theorem
  for $\Sigma_{n+1}$ and $\Pi_{n+1}$~formulas~\cite[Lemma~5.13]{art:subarith-ultrapower}.
This ensures $\Sigma_{n+2}$-elementarity between
 the base model~$M$ and the ultrapower~$K$,
  although, as remarked earlier,  $\Sigma_{n+1}$-elementarity is already enough to run our argument.
We transform these ideas into a syntactic proof in the following.

\begin{lem}\label{prop:los}
There is a polynomial-time procedure which,
 given $n\in\IN$ in unary and a formula $\theta(y_1,y_2,\dots,y_\ell)$ in~$\Sigma_n\cup\Pi_n$,
 outputs a proof of
 the following in $\ind\Sigma_n+\exp$:
\begin{itemize}
\item[] for any $s\in\Cond$ and names $v_{1}\dots, v_{\ell}$ such that $s\forces_{\tau(n)} \bar v\defd$, \\
$s\forces_{\tau(n)}\theta(\bar v)\nsc\aallin x{\Ext_{n+1}(s)}{ \left(\val{\bar v}x \defd \then \theta(\val{\bar v}x) \right)}.$
\end{itemize}
\end{lem}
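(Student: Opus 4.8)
The plan is to prove the displayed bi-implication by induction on the structure of $\theta$, essentially running the textbook inductive proof of \L o\'s's theorem but replacing every ``passage to an element of the ultrapower'' by a manipulation of conditions and names. Since forcing is invariant under logical equivalence of the forced formula (with polynomial-size proofs, by Proposition~\ref{prop:logic+}), I may treat $\theta \in \Sigma_n \cup \Pi_n$ as prenex, so that all the subformulas that arise again lie in $\Sigma_m \cup \Pi_m$ for some $m \le n$. The recurring bookkeeping point --- and the place where the weakness of the base theory $\ind\Sigma_n + \exp$ must be watched --- is that, \emph{under the standing assumption that the names are defined}, substituting them into a $\Sigma_n$ or $\Pi_n$ formula yields a $\Sigma_{n+1}$ formula of $M$: ``$v_i(x) = y_i$'' is $\Sigma_{n+1}$, and using $\bd\Sigma_n$ to contract bounded quantifiers at level $n-1$, together with the fact that the (unique, on a defining condition) name-value extraction $\bigwwedge_i \Sigma_{n+1}\hyp\Sat(v_i, \langle x, y_i\rangle)$ can be commuted past the quantifiers of $\theta$, one checks that $\bigwwedge_i \Sigma_{n+1}\hyp\Sat(v_i, \langle x, y_i\rangle) \wedge \theta(\bar y)$ is provably equivalent to a $\Sigma_{n+1}$ formula. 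In particular, for $\theta \in \Sigma_n \cup \Pi_n$ and $s \in \Cond$, the set $\{x \in \Ext_{n+1}(s) : \bar v(x)\defd \wedge \theta(\bar v(x))\}$ has the form $\Ext_{n+1}(s')$ for a condition $s' \extendseq_{\tau(n)} s$ whenever it is infinite.

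In the atomic case the equivalence is just the definition of $s \forces_{\tau(n)} \alpha(\bar v)$ in Definition~\ref{defn:name-defd}; a non-simple atomic $\theta$ is reduced to $\theta^*$ and handled through the formulas $t[x]$ as in the proof of Lemma~\ref{lem:D0elem}, using Lemma~\ref{lem:stermsubst}. Throughout I would use Lemma~\ref{lem:weak-strong-defd} to replace $s$ by a stronger $\hat s$ on which all name-values are total and bounded, so that the antecedent ``$\val{\bar v}x\defd \then$'' becomes harmless. The Boolean cases ($\neg$, $\then$) follow routinely from clauses \ref{item:ftr/neg} and \ref{item:ftr/then} and the induction hypothesis; the only point is the negation step, where --- if $\theta(\bar v(x))$ held cofinally often below $s$ --- the observation of the previous paragraph produces a condition $s' \extendseq_{\tau(n)} s$ which forces $\theta(\bar v)$ by the induction hypothesis, contradicting $s \forces_{\tau(n)} \neg\theta(\bar v)$; the other direction of this step is immediate.

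The unbounded quantifier case, say $\theta = \fa w{\psi(\bar v, w)}$ with $\psi \in \Sigma_{n-1} \cup \Pi_{n-1}$ (the $\exists$ case reducing to this via $\neg\forall\neg$), is the core of the argument and the step I expect to be the main obstacle. The direction from right to left is the milder one: given that $\fa w{\psi(\val{\bar v}x, w)}$ holds for $\val{\bar v}x$-almost all $x$, and given an arbitrary name $w$ and $s' \extendseq_{\tau(n)} s$, I would apply Lemma~\ref{lem:weak-strong-defd} below $s'$ to get a condition on which $w$ and $\bar v$ are total and bounded; there $\psi(\bar v(x), w(x))$ holds almost everywhere, so the induction hypothesis yields exactly the forcing of $\psi(\bar v, w)$ demanded by clause \ref{item:ftr/forall}. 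For the direction from left to right, suppose $s \forces_{\tau(n)} \fa w{\psi}$ but, working below $\hat s$, that $\fa w{\psi(\bar v(x), w)}$ fails for the $x$'s in a condition $D \extendseq_{\tau(n)} \hat s$; I would form the name $w^*$ picking, for each such $x$, the least counterexample (which exists by $\ind\Sigma_n$, and whose graph is $\Sigma_{n+1}$ by the bookkeeping above). The delicate part is then to pass to a subcondition of $D$ on which $w^*$ is bounded --- so that $w^*$ counts as \emph{defined} in the sense of Definition~\ref{defn:name-defd} --- which is carried out by a cofinality argument in the style of the proof of clause \ref{item:fint/densedefd} in Lemma~\ref{lem:ultrapower-fint}, using $\ind\Sigma_n$ and the maximality inherent in the forcing partial order. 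Once that subcondition is in hand, $w^*$ together with it witnesses $s \nforces_{\tau(n)} \fa w{\psi}$ via clause \ref{item:ftr/forall} (the induction hypothesis for $\psi$ ensures that no further extension forces $\psi(\bar v, w^*)$), contradicting the assumption. Finally, every construction above --- the prenex normalization, the choice of auxiliary conditions and names, and the proof transformations --- is uniform and polynomial-time computable in $n$ (given in unary) and $\theta$, so assembling the pieces yields the claimed procedure.
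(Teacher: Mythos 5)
Your overall plan is the right one, and the atomic, Boolean, and right-to-left quantifier steps are sound. The one place where the argument genuinely breaks down is the bounding of the least-counterexample name in the left-to-right direction of the unbounded universal quantifier case, and it breaks down precisely because you reach for the wrong tool.

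You correctly identify the delicate step: having assumed that $\theta(\val{\bar v}x)$ fails cofinally on some condition $D \extendseq \hat s$, you form the name $w^*$ taking the $\ind\Sigma_n$-least witness to the failure, and you then need a subcondition on which $w^*$ is \emph{bounded}, so that $w^* \defd$ is forced and the induction hypothesis can be applied. You propose to obtain this ``by a cofinality argument in the style of the proof of clause \ref{item:fint/densedefd} in Lemma~\ref{lem:ultrapower-fint}.'' But that argument goes in exactly the opposite direction: in the proof of \ref{item:fint/densedefd}, the $H$-construction starts from the \emph{failure} of item~(ii) of Definition~\ref{defn:name-defd} and produces a subcondition on which the name provably goes to infinity, i.e.\ on which no further extension can bound it. It is a density argument for ``not defined,'' not a method for extracting a subcondition on which the name is bounded. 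If $w^*$ is cofinally unbounded on $\Ext_{n+1}(D)$, no amount of refining the condition will bound it.

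The tool that actually closes the gap --- and the one the paper uses at this point --- is strong $\Sigma_n$ collection (equivalent to $\ind\Sigma_n$ over $\ind\Delta_0$ for $n \geq 1$). Because $\hat s \forcesp \bar v \defd$ gives a uniform bound $d$ with $\val{\bar v}x \le d$ on the relevant $x$, there are only finitely many possible input tuples $\bar y \le d$, and strong $\Sigma_n$ collection applied to $\exists z\,\neg\eta(\bar y, z)$ for $\bar y \le d$ yields a single $d'$ bounding the least counterexample uniformly. Only then does $D \forcesp w^* \defd$ hold, and the contradiction with $s \forces \forall w\, \psi(\bar v, w)$ via \ref{item:ftr/forall} goes through. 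You do mention $\ind\Sigma_n$ in passing, but the explicit reference to the \ref{item:fint/densedefd}-style cofinality construction suggests you had a density argument in mind rather than collection; as written the step does not work. (A minor structural difference, not a gap: the paper first handles all $\Delta_0$ formulas by structural induction and then does a separate induction on the number of unbounded quantifiers, whereas you propose a single prenex-form induction; either organization is fine once the bounding step is repaired.)
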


\begin{proof}
Like the usual proof of \L o\'s's Theorem
  for $\Sigma_{n+1}$~ultrapowers~\cite[Lemma~5.13]{art:subarith-ultrapower},
   the construction splits into two parts.
\begin{enumerate}
\item For $\Delta_0$~formulas~$\theta$, we build the required proof
  by induction on the structure of~$\theta$.\label{st:los/D0}
\item Beyond that, we build the proof using induction
  on the number of unbounded quantifiers in~$\theta$. \label{st:los/q}
\end{enumerate}


As in the proof of Lemma \ref{lem:ultrapower-fint}, we think of $n \in \IN$ as fixed,
and leave it to the reader to verify that the proofs in $\ind\Sigma_n+\exp$
described below can be found in time polynomial in the given parameters.

In the construction for~\partref{st:los/D0}, the base step is for an atomic formula $\theta(\bar v)$. 
If $\theta$ is simple atomic, the required equivalence follows immediately from the definition of $\forces_{\tau(n)}$.
For a non-simple atomic $\theta$, the argument is similar to the one in Lemma \ref{lem:D0elem}.
By induction on the structure of a term $t(\bar u)$
($\bar u$ a subtuple of $\bar v)$ appearing in $\theta$,
we build a proof that, if $w$ is a name, then
$s \forces t(\bar u) = w$ if and only if the following happens:
$s \forces \bar u \defd$, $s \forces w \defd$,
and for almost all $x \in \Ext_{n+1}(s)$, if $\val {\bar u}x\defd$ and $\val w x\defd$, then $t(\val {\bar u}x) = \val w x$.
Once this is proved for each term in $\theta$, the required equivalence is proved using
\ref{item:ftr/forall}, \ref{item:ftr/then}, and the definition of $\forces_{\tau(n)}$.

The inductive steps of the construction for~\partref{st:los/D0} are more or less based
on the usual proof of \L o\'s's Theorem with the
satisfaction relation for the $\Sigma_{n+1}$ ultrapower replaced by the forcing relation.
As a demonstration, we show the case for negation.
Lemma~\ref{lem:weak-strong-defd}
lets us restrict attention to the case when $s\forcesp \bar v\defd$, 
which in particular implies $\val{\bar v}x\defd$ for all $x \in \Ext_{n+1}(s)$.

Suppose $\theta$ is $\neg \eta$ for $\eta \in\Delta_0$.
The inductive assumption gives us a proof
that if $s$ and $\bar v$ are such that $s \forcesp \bar v\defd$,
then $s \forces \eta(\bar v)$ exactly if for almost all $x \in \Ext_{n+1}(s)$
the values of the $v_i$'s at $x$ satisfy $\eta$.

Take any $s\in\Cond$ and any~$\bar e$
such that $s\forcesp\bar e\defd$. 
If $s\nforces\neg\eta(\bar e)$,
 then \ref{item:ftr/neg}~gives us $s'\extendseq s$
  such that $s'\forces\eta(\bar e)$, and
   so, by the proof from the induction hypothesis,
    $\aallin x{\Ext_{n+1}(s')}\, \eta(\val{\bar e} x)$,
hence also
       $\neg \aallin x{\Ext_{n+1}(s)}\, \neg\eta(\val{\bar e} x)$.

Conversely, suppose $s\forces\neg\eta(\bar e)$.
By~\ref{item:ftr/neg} and the proof from the induction hypothesis,
 \begin{equation*}
  \faexteq{s'}{s}{
   \excfin x{\Ext_{n+1}(s')}\, \neg\eta(\val{\bar e} x)
}.
 \end{equation*}
Take a (code for a) $\Sigma_{n+1}$~formula $s'$ so that $x\in \Wx {s'}\leftrightarrow x\in \Wx s \wedge  \eta(\val{\bar e} x)$. By the above, $s'$ cannot be a member of $\Cond$, and thus $\Wx {s'}$ is bounded.
Hence, we have
  $\neg\excfin x{\Ext_{n+1}(s)}{\eta(\val{\bar e} x)}$.
So, given that $s \forcesp \bar e \defd$, we have
  $\aallin x{\Ext_{n+1}(s)}{\neg\eta(\val{\bar e} x)}$.

If $n=0$, then \partref{st:los/D0}~already suffices.
So suppose $n>0$.
In the construction for~\partref{st:los/q},
 we also imitate the usual proof of \L o\'s's Theorem,
  but additionally we need to take care of the fact that
   our anticipated ultrapower is truncated.
The bounds involved will be provided
by strong $\Sigma_n$~collection, which is the principle that given a $\Sigma_n$ formula $\varphi(x,y)$
and a bound $a$, there is a bound $b$ such that for all $x<a$, if any $y$ satisfying $\varphi(x,y)$ exists,
then some such $y$ may be found below $b$. It is well-known that for $n > 0$ this is equivalent to
$\ind\Sigma_n$, cf.~\cite[Theorem~I.2.23(a)]{book:hajek+pudlak}.
The only part of~\partref{st:los/q} that requires special attention is
 the $\then$~direction for the universal quantifier.
 Again, by Lemma~\ref{lem:weak-strong-defd} we only need to consider the case when $s\forcesp \bar v\defd$.

So, suppose $\theta\in \Sigma_{n}\cup\Pi_{n}$ is of the form $\fa{y}{\eta(\bar x,y)}$.
Note that $\eta\in \Pi_{n}$ in such a case.
Work over $\ind\Sigma_n+\exp$.
The inductive assumption gives us a proof that
if $s, \bar v, w$ are such that $s \forcesp \bar v \defd, w \defd$,
then $s \forces \eta(\bar v, w)$ exactly if for almost all $x \in \Ext_{n+1}(s)$,
it holds that $\eta(\bar v(x), w(x))$.

Take any $s\in\Cond$ and any~$\bar e$
 such that $s\forcesp\bar e\defd$.
  Assume
 \begin{equation*}
  \excfin x{\Ext_{n+1}(s)}{\neg\fa{y}{\eta(\val{\bar e} x,y)}
}.
 \end{equation*}
Take a (code for a) $\Sigma_{n+1}$~formula $s'$ so that
\[x\in \Wx {s'}\leftrightarrow x\in \Wx s\wedge\ex{y}{\neg\eta(\val{\bar e} x,y)}.\]
Then, by our assumption, $s'\in\Cond$ and $s'\extendseq s$.
Take a (code for a) $\Sigma_{n+1}$~formula $e'$
 with free variables $x, z$
 so that
  \begin{equation*}
\Sigma_{n+1}\hyp\Sat(e',\langle x,z\rangle)
\leftrightarrow
   {x\in \Wx {s'}
    \wedge\neg\eta(\val{\bar e}x,z)
    \wedge\falt{z'}{z}\eta(\val{\bar e}x,z')
}.
  \end{equation*}
Since $\ind\Sigma_n$ holds and $s'\forcesp \bar e$, we know
 \begin{math}
\fain x{\Ext_{n+1}(s')}{
   \exi z{\Sigma_{n+1}\hyp\Sat(e',\langle x,z\rangle)}
  }
 \end{math}.

Recall that $s\forcesp\bar e\defd$ and $s'\extendseq s$.
Find~$d$ such that
 \begin{equation*}
  \fain x{\Ext_{n+1}(s')}{\bigwwedge_{i=1}^\ell 
  e_i(x) \le d
   }.
 \end{equation*}
Use strong $\Sigma_{n}$~collection
 to obtain~$d'$ such that
 \begin{equation*}
  \fale{\bar y}d{\bigl(
   \ex{z}{\neg\eta(\bar y,z)}
   \then\exle{z}{d'}{\neg\eta(\bar y,z)}
  \bigr)},
 \end{equation*}
which ensures
 \begin{equation*}
\fain x{\Ext_{n+1}(s')}{\exle z{d'}{
   \Sigma_{n+1}\hyp\Sat(e',\langle x,z\rangle)
  }}.
 \end{equation*}
Thus $s'\forcesp\bar e'\defd$, and the proof from the induction hypothesis gives us
  $s'\forces\neg\eta(\bar e,\bar e')$.
Hence \ref{item:ftr/forall}
 tells us $s\nforces\fa{y}{\eta(\bar e,y)}$.
\end{proof}

\begin{remark} For $n > 0$, the argument in the proof above
relies on induction axioms in two places.
 Speaking in model-theoretic terms,
  first we use at most $\ind\Sigma_{n-1}$ to guarantee the existence
   of sufficiently many Skolem functions when proving \L o\'s's Theorem
   for the ultrapower~$K$, and
 then we use strong $\Sigma_n$~collection to pass on
  the elementarity from~$K$ to~$\sup_KM$. 
For $n=0$, the latter application is not needed 
  (and we do not have strong collection anyway),
 but we use full $\ind\Sigma_n$ to get the \L o\'s Theorem we want.
\end{remark}

We can now complete the argument that $\tau(n)$ 
is a forcing interpretation of $\bd\Sigma_{n+1} + \exp$ in $\ind\Sigma_n + \exp$.
In the usual argument proving $\Pi_{n+1}$ conservativity of $\bd\Sigma_{n+1}$ of $\ind\Sigma_n$~\cite[page~229]{book:hajek+pudlak} it is easy to deduce from the analogue of Lemma~\ref{prop:los}
  that the truncated ultrapower~$\sup_KM$ is a $\Sigma_{n+1}$-elementary extension of the base model~$M$.
In the model-theoretic setting, this directly implies $\sup_KM\models\ind\Delta_0$.
When formulated in terms of forcing interpretations,
 elementarity becomes reflection,
  but being reflecting in the sense of Definition~\ref{def:reflects}
   is not enough to ensure that \emph{every} condition forces~$\ind\Delta_0$,
    which is required by~\ref{item:fint/thy}.
So we strengthen the notion of reflection in the statement below.
From this, one quickly derives~$\exp$
 using the cofinality of~$M$ inside $\sup_KM$ and the $\ind\Delta_0$-provable monotonicity of the exponential function.

\begin{thm}\label{thm:IBfint}
For every $n\in\IN$,
 the forcing translation $\tau(n)$ is a polynomial forcing interpretation
  of $\bd\Sigma_{n+1}+\exp$ in $\ind\Sigma_n+\exp$
   that is polynomially $\Pi_{n+2}$-reflecting.

In fact the $\tau(n)$ are polynomially $\Pi_{n+2}$-reflecting in the following strengthened
and uniform sense:
there is a single polynomial-time procedure which, given $n\in\IN$ in unary
and a $\Pi_{n+2}$~sentence~$\gamma$,
 outputs a proof of
 \begin{equation*}
  \exin s{\Cond[\tau(n)]}{(s\forces_{\tau(n)}\gamma)}\then\gamma
 \end{equation*} in $\ind\Sigma_n+\exp$.
\end{thm}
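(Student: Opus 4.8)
The plan is to build on what is already in place: by Lemma~\ref{lem:ultrapower-fint}, $\tau(n)$ is a polynomial forcing interpretation of $\Lone$ in $\ind\Sigma_n+\exp$, so it remains to verify clause~\ref{item:fint/thy} --- that every condition forces each axiom of $\bd\Sigma_{n+1}+\exp$ --- and, separately, to establish the strengthened polynomial $\Pi_{n+2}$-reflection. All the proof-constructions below are to be carried out in polynomial time, uniformly in $n$ given in unary; the dependence on $n$ is unavoidable, since the universal $\Sigma_{n+1}$~formula is already present in $\Ext_{n+1}$, exactly as in Lemmas~\ref{lem:ultrapower-fint} and~\ref{prop:los}.

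For clause~\ref{item:fint/thy}, the easy cases are PA$^-$ and the ($\Pi_1$) bounded-quantifier form of the $\Delta_0$-induction axioms: each such axiom has the shape $\fa{\bar u}{\psi(\bar u)}$ with $\psi\in\Delta_0$ and is a theorem of $\ind\Sigma_n+\exp$, and for a sentence of this form I would unfold the forcing clause~\ref{item:ftr/forall} for $\forall$ and invoke \L o\'s's theorem (Lemma~\ref{prop:los}, already in its $\Delta_0$ case) to reduce $s\forces_{\tau(n)}\fa{\bar u}{\psi(\bar u)}$ to the assertion that $\psi$ holds of the values of the relevant names wherever those values are defined, which is true because $M\models\fa{\bar u}{\psi(\bar u)}$; so in fact every condition forces these axioms. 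The axiom $\exp$ is not of this form, and for it I would instead use the cofinality of the ground model in the truncated ultrapower: given a name $w$ and a condition $s'$ with $s'\forces_{\tau(n)}w\defd$, pass via Lemma~\ref{lem:weak-strong-defd} to $\hat s'$ along which the values of $w$ are bounded by some $\bar a\in M$; then $2^{\bar a}$ exists by $\exp$ in $M$, there is a name $w'$ with $\val{w'}{x}=2^{\val w x}$ (definable by the usual coding, with values $\le 2^{\bar a}$ by the $\ind\Delta_0$-provable monotonicity of exponentiation), so $\hat s'$ strongly defines $w'$, and Lemma~\ref{prop:los} (again at the $\Delta_0$ level) yields $\hat s'\forces_{\tau(n)}\ex y{(y=2^{w})}$; hence $\exp$ is forced by every condition.

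The main work --- and what I expect to be the main obstacle --- is forcing the $\bd\Sigma_{n+1}$ axioms. This is to be the forcing transcription of the Paris--Kirby fact (cf.~\cite[Lemma~12]{art:formconserv}, \cite{Paris-Kirby}, \cite[p.~229]{book:hajek+pudlak}) that a proper cofinal $\Sigma_{n+1}$-elementary extension of a model of $\ind\Sigma_n$ satisfies $\bd\Sigma_{n+1}$. After reducing $\Sigma_{n+1}$-collection to $\Pi_n$-collection (contract the leading existential quantifier into the collected variable), I would show that an arbitrary condition $s$ forces the collection axiom for $\pi\in\Pi_n$: given names $\bar V,A$ and an extension $s_0$ of $s$ forcing $\bar V\defd$, $A\defd$ and the hypothesis $\falt{x}{A}{\ex y{\pi(x,y,\bar V)}}$, apply Lemma~\ref{lem:weak-strong-defd} to obtain $\hat s\extendseq s_0$ with $\hat s\forcesp_{\tau(n)}\bar V,A$ (so the values of $A$ along $\hat s$ are bounded, say by $\bar a$), then unfold the forced hypothesis through its bounded universal and its existential quantifier and apply the $\Pi_n$ case of Lemma~\ref{prop:los} to conclude that $M\models\falt{x}{A(x')}{\ex y{\pi(x,y,\val{\bar V}{x'})}}$ for almost all $x'\in\Ext_{n+1}(\hat s)$. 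The delicate step is then to realise the collection bound inside the truncated ultrapower: although $M$ need not satisfy $\bd\Sigma_{n+1}$, so that the pointwise-least collecting bound need not be bounded along $\hat s$, one can --- using $\ind\Sigma_n$ via the same ``strong $\Sigma_n$~collection''/cofinality mechanism that forces the truncation of the ultrapower in the proof of Lemma~\ref{prop:los} --- pass to an infinite $\Sigma_{n+1}$-definable subcondition $s^{*}\extendseq\hat s$ along which some fixed $b^{*}\in M$ bounds the required witnesses, whence $s^{*}\forces_{\tau(n)}\ex b{\falt{x}{A}{\exlt{y}{b}{\pi(x,y,\bar V)}}}$. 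Arranging the complexity bookkeeping so that all the auxiliary sets stay at the $\Sigma_{n+1}$ level at which conditions live is the crux of this argument; for $n>0$ one may alternatively use the more Section~\ref{sec:ramsey}-like forcing interpretation announced in the remark after this theorem.

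Finally, for the strengthened uniform polynomial $\Pi_{n+2}$-reflection, write a given $\Pi_{n+2}$~sentence as $\gamma=\fa{\bar y}{\delta(\bar y)}$ with $\delta\in\Sigma_{n+1}$. Working in $\ind\Sigma_n+\exp$, assume $\neg\gamma$ and fix $\bar y_0$ with $\neg\delta(\bar y_0)$, a true $\Pi_{n+1}$~sentence with ground-model parameters. Every condition defines the constant name $\bar y_0$ (obtained, as in the proof of Lemma~\ref{prop:los}, by the usual coding of a definable function as a $\Sigma_{n+1}$~formula), and, unfolding the outermost universal quantifier of $\neg\delta(\bar y_0)$ down to a $\Sigma_n$ matrix and applying Lemma~\ref{prop:los}, every condition forces $\neg\delta(\bar y_0)$; by Lemma~\ref{lem:LEM} together with monotonicity of forcing (Lemma~\ref{lem:monodense}), no extension of any condition forces $\delta(\bar y_0)$, so by clause~\ref{item:ftr/forall} no condition forces $\gamma$. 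This gives $\ind\Sigma_n+\exp\proves\exin s{\Cond[\tau(n)]}{(s\forces_{\tau(n)}\gamma)}\then\gamma$ --- a statement stronger than plain $\Pi_{n+2}$-reflection in the sense of Definition~\ref{def:reflects} --- and inspection shows the proof is produced in time polynomial in $n$ (given in unary) and $\gamma$. Polynomiality of the forcing interpretation then follows, $\bd\Sigma_{n+1}+\exp$ being handled by finitely many uniform procedures (for PA$^-$, $\exp$, and the two uniformly-treated schemes).
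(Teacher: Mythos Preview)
Your treatment of reflection and of $\ind\Delta_0+\exp$ is essentially the paper's, just written contrapositively in the reflection case; no issues there.

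The gap is in your handling of $\bd\Sigma_{n+1}$. You try to pass from the forced hypothesis $\hat s\forces\falt xA{\ex y{\pi(x,y,\bar V)}}$ (with $\pi\in\Pi_n$) to the pointwise statement ``$M\models\falt x{A(x')}{\ex y{\pi(x,y,\val{\bar V}{x'})}}$ for almost all $x'\in\Ext_{n+1}(\hat s)$'', and then to extract a uniform bound~$b^*$ on a subcondition. Neither step goes through. The first is \L o\'s for a formula of shape $\forall^{\mathrm b}\exists\,\Pi_n$, which is outside $\Sigma_n\cup\Pi_n$, so Lemma~\ref{prop:los} does not apply; and the obvious attempt to define the subcondition of ``bad'' $x'$ (those where the pointwise statement fails) yields a $\Pi_{n+1}$ set, not a $\Sigma_{n+1}$ one, so it is not a condition. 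The second step, even granting the first, asks you to bound witnesses to a $\Pi_n$ formula over a finite range --- that is $\bd\Pi_n$, precisely what you are trying to force, and $M\models\ind\Sigma_n$ need not satisfy it. Your appeal to ``the same strong $\Sigma_n$~collection / cofinality mechanism'' does not help: strong $\Sigma_n$~collection bounds $\Sigma_n$ witnesses, not $\Pi_n$ ones.

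The paper proceeds quite differently. It argues by \emph{contradiction} and by \emph{induction on $m\le n$}, proving $\fain s\Cond{(s\forces\bd\Pi_m)}$. At stage~$m$ one assumes $s\forces\fale xt{\ex y{\theta}}$ but $s\not\forces\ex{y^*}{\fale xt{\exle y{y^*}{\theta}}}$; the second assumption, instantiated at constant names~$\check a$, says $s\not\forces\fale xt{\exle y{\check a}{\theta}}$, and \emph{this} formula is equivalent to a $\Pi_m$~formula \emph{over the already-forced $\bd\Pi_{m-1}$}, so Lemma~\ref{prop:los} applies and yields, for each~$a$, arbitrarily large $p\in\Ext_{n+1}(s)$ carrying a ``bad'' $q\le t(p)$ with $\fale ya{\neg\theta(q,y,\bar u(p))}$. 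From this $(\dag)$ one builds, by the $H$-sequence construction (as in Lemma~\ref{lem:ultrapower-fint}), a $\Sigma_{n+1}$-definable subcondition $s'$ and a $\Sigma_{n+1}$-definable bounded name~$e$ such that $s'\forces e\le t$ and $s'\forces\neg\ex y{\theta(e,y,\bar u)}$, contradicting the hypothesis. The induction on~$m$ and the explicit diagonal construction of the bad name are both essential and are absent from your sketch.
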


\begin{proof}
As in earlier arguments in this section, we think of $n \in \IN$ as fixed
and leave it to the reader to check that the running time of a procedure constructing
the proofs in $\ind\Sigma_n + \exp$ described below
can be bounded by a polynomial in $n$ and $|\gamma|$.
We only have to consider \ref{item:fint/theory} and the strengthened reflection property,
as the other conditions from the definition of forcing interpretation 
have been dealt with in Lemma \ref{lem:ultrapower-fint}.

\paragraph{Strengthened reflection.}
Let $\gamma$ be a $\Pi_{n+2}$ sentence of the form $\fa{\bar x}{\ex{\bar y}{\theta(\bar x,\bar y)}}$, where $\theta(x_1,x_2,\dots,x_k,y_1,y_2,\dots,y_\ell)$ is $\Pi_n$.
Let $s$
 be such that $s\forces\fa{\bar x}{\ex{\bar y}{\theta(\bar x,\bar y)}}$.
Take any $a_1,a_2,\dots,a_k$.
Then, as one can verify using~\ref{item:ftr/forall} and the definition of $\forces_{\tau(n)}$,
 we have $s\forces\ex{\bar y}{\theta(\bar{\cname a},\bar y)}$,
  where each $\cname a_i$ is the (code for the)
$\Sigma_{n+1}$~formula $x = x \wedge y=\nmrl{a_i}$, which defines the graph of the constant function with value $a_{i}$.
From this, we deduce that
 \begin{math}
  \faexteq{s'}s{\exexteq{s''}{s'}{\ex{\bar v}{\bigl(
   s''\forces\theta(\bar{\cname a},\bar v)
  \bigr)}}}
 \end{math}.
We then apply Lemma~\ref{prop:los} to get
 \begin{equation*}
  \faexteq{s'}s{\exexteq{s''}{s'}{\ex{\bar v}{\big(s'' \forces \bar v \defd \land 
   \aallin x{\Ext_{n+1}(s'')}{(\val{\bar v}x \defd \then (\theta(\bar a,\val{\bar v}x))\big)
}
  }}}.
 \end{equation*}
In particular, we know $\ex{\bar y}{\theta(\bar a,\bar y)}$ holds.

\paragraph{Interpretation of $\ind\Delta_0+\exp$.}
The $\ind\Delta_0$~part follows easily
 from strengthened reflection for $\Sigma_1$~sentences.
For $\exp$, simply prove that for any $s\in\Cond$ and any $e$,
 if $s\forces e\defd$,
  then $s\forces e'=2^e$, where $e'$ is a (code for a) $\Sigma_{n+1}$~formula such that
   \begin{equation*}
     \Sigma_{n+1}\hyp\Sat(e',\langle x,z\rangle)
     \leftrightarrow
         \ex z{\bigl(
     \Sigma_{n+1}\hyp\Sat(e,\langle x,y\rangle)
     \wedge y=2^z
    \bigr)}.
   \end{equation*}

\paragraph{Interpretation of $\bd\Sigma_{n+1}$.}
By induction on~$m \le n$, we construct proofs of
 \begin{equation*}
  \fain s\Cond{(s\forces\bd\Pi_m)}.
 \end{equation*} 
This is sufficient by Proposition~\ref{prop:logic+}
  and the well-known equivalence between $\bd\Pi_n$ and $\bd\Sigma_{n+1}$.
Take any $m\leq n$.
In the case when $m\not=0$, suppose that
 we already have a proof of $\fain s\Cond{(s\forces\bd\Pi_{m-1})}$.

Consider a $\Pi_m$~formula $\theta(x,y,\bar z)$.
Work over $\ind\Sigma_n+\exp$.
Given names $t, \bar u$ and a condition $s\in \Cond$ such that $s\forcesp t\defd$, $s\forcesp \bar u\defd$, 
and $s \forces \fale x{t}{\ex y {\theta(x,y,\bar u)}}$, it is enough to show that 
$s\forces \ex {y^{*}}{\fale x{t}{\exle y{y^{*}} {\theta(x,y,\bar u)}}}$.

For the sake of contradiction, assume that $s\not \forces \ex {y^{*}}{\fale x{t}{\exle y{y^{*}} {\theta(x,y,\bar u)}}}$.
In what follows, we will construct a condition $s'\extendseq s$ and a name $e$ so that $s'\forcesp e\defd$, $s'\forces e\leq t$ and $s'\forces \neg\ex{y}{\theta(e,y,\bar u)}$,
which will lead to a contradiction with $s \forces \fale x{t}{\ex y {\theta(x,y,\bar u)}}$.
We first make the following claim.
\begin{claim*}
For any $a$, there exist $p \ge a$ and $q$ such that
\begin{align*}
(\dag)\, p\in\Wx{s}\wedge q\le \val {t}{p}\wedge \fale y a {\neg\theta(q,y,\val{\bar u}p)}. 
\end{align*}
\end{claim*}
To prove the claim, argue as follows. By our assumptions on $s$, we have
$s \not \forces {\fale x{t}{\exle y{\cname a} {\theta(x,y,\bar u)}}}$ for any $a$, where $\cname a$ is defined as above.
The formula $\fale x{t}{\exle y{\cname a} {\theta(x,y,\bar u)}}$ is either a $\Pi_m$ formula outright if $m = 0$,
or equivalent to a $\Pi_{m}$~formula over $\bd\Pi_{m-1}$ otherwise, 
and in the latter case we know that $s\forces \bd\Pi_{m-1}$ by the proof from the induction hypothesis.
So, we may apply Lemma~\ref{prop:los} to this formula, deriving the existence of arbitrarily large
$p$ for which there exists $q \le t(p)$ such that $(\dag)$ holds. To conclude the proof of the claim,
take some such $p \ge a$.

Notice that $(\dag)$ is a $\Sigma_{n+1}$ statement. We now reason somewhat 
like in the proof of Lemma \ref{lem:ultrapower-fint}. 
Let $H$ be the set of all finite sequences of the form $(p_i,q_i,w_i)_{i \le k}$ such that 
for each $i \le k$, the triple $\langle p_i, q_i, w_i \rangle$ is the smallest one satisfying:
\begin{itemize}
\item $w_i$ is a witness that $p_i,q_i$ satisfy $(\dag)$ with $a=\langle p_{i-1}, q_{i-1}, w_{i-1} \rangle$, 
or with $a =0$ if $i = 0$,
\item $p_i > \langle p_j, q_j, w_j \rangle$ for each $j < i$.
\end{itemize}
Then $H$ is $\Sigma_{0}(\Sigma_{n})$-definable; in fact, in the presence
of $\bd\Sigma_n$ it can be defined by the conjunction of a $\Pi_n$ and a $\Sigma_n$ formula. 
Moreover, $H$ has no maximal element by the Claim, thus it is cofinal.
Define $s'$ so that $p \in \Wx{s'}$ iff there is a finite sequence $(p_i,q_i,w_i)_{i \le k}$ in $H$ such that $p = p_k$. 
Define the $\Sigma_{n+1}$ formula $e$ so that $\Sigma_{n+1}\hyp\Sat(e,\tuple{p,q})$ holds iff 
there is a finite sequence in $H$ such that $p=p_k,q=q_k$.
%
Then $s' \in \Cond$, and clearly $s' \extendseq s$. 
Moreover, by construction we have $s' \forcesp e \defd$ and $s' \forces e \le t$. 
Note in particular that the existence of a number bounding all values of $e$ on arguments
from $\Wx{s'}$ follows from $s \forcesp t \defd$.

It remains to show that $s'\forces \neg\ex{y}{\theta(e,y,\bar u)}$. 
Otherwise, there would be a condition $s''\extendseq s'$ and a name $e'$ 
such that $s''\forcesp e'\defd$ and $s''\forces \theta(e,e',\bar u)$.
By Lemma~\ref{prop:los}, we have
\[\aallin p{\Wx {s''}}{\theta(\val e p,\val{e'} p,\val{\bar u}p)}.\]
Since $s''\forcesp e' \defd$,
there is some number $d$ such that $\forall p {\in}{\Wx {s''}}\,({\val {e'} p \le d})$, 
and thus
\[\aallin p{\Wx {s''}}{\exle y d{\theta(\val e p,y,\val{\bar u}p)}}.\]
Taking large enough $(p_i,q_i,w_i)_{i \le k}$ in $H$ such that $\langle p_{k-1}, q_{k-1}, w_{k-1} \rangle \ge d$ and $p_{k}\in \Wx{s''}$, 
we get $\exle y d{\theta(q_k, y,\val{\bar u}{p_{k}})}$, but this contradicts the definition of $p_{k}$ and $q_{k}$.
\end{proof}

\begin{remark}
For $n > 0$, there is an alternative polynomially $\Pi_{n+2}$-reflecting forcing interpretation
of $\bd\Sigma_{n+1}$ in $\ind\Sigma_n$ 
based on a generic cut construction similar to the one from Section~\ref{sec:ramsey}.
We work in a version $\ind\Sigma_n + (\I)$ in which the axiom $(\I3)$ is
changed to ``every infinite $\Sigma_{n-1}$-definable set contains an $\omega^x$-large
subset for some $x > \I$''. (The axioms $(\I1)$--$(\I3)$ can then be eliminated
by some variant of Lemma~\ref{lem:case-distinction} as in Section \ref{sec:ramsey}).
The role of infinite sets is now played by infinite $\Sigma_{n-1}$-definable sets.
The forcing conditions are finite sets $s$ (which now include e.g.~all bounded $\Delta_n$-definable sets)
that are $\omega^x$-large for some $x > \I$ and, for $n \ge 2$, have the property
that if $\ell_1 <  \ell_2$ are elements of $s$,
then the Skolem function for the first $\exists$ quantifier of the $\Sigma_{n-1}$ universal formula
takes values below $\ell_2$ on inputs below $\ell_1$.
The rest of the argument is along the lines of Section \ref{sec:ramsey}, except that the
step for $\RT^2_2$ in the proof of Lemma \ref{lem:rca-I-fint-rt22}
is replaced by the considerably easier step for the infinite $\Delta_0$ pigeonhole principle
(which is well known to be equivalent to $\bd\Sigma_2$) relative to $0^{(n-1)}$.

In an analogue of this argument for $n = 0$, the conditions
would be finite sets with more than $\I$ elements
that are at least exponentially far apart from each other.
However, the proof of Lemma \ref{lem:rca-I-poly-fint-rt22} (reflection) no longer works.
The proof of the reflection lemma makes use of the fact
that for any element $k$ there is a condition $s$ such that $\min s \ge k$.
But in a model of $\ind \Delta_0 + \exp$ there might
be an element above which the exponential function
may be iterated only a standard (\emph{a fortiori}, no greater than $\I$)
number of times.
\end{remark}

For each fixed $n \in \IN$, the proof of Theorem~\ref{thm:IBfint} can be formalized in
the theory $\PV$, a fragment of bounded arithmetic corresponding to polynomial-time computation.
As already mentioned, this provides a solution to a problem of Clote~et~al.~\cite{art:formconserv}
for $n \ge 1$ and a partial solution for $n=0$.

\begin{cor}\label{cor:coll-ind}
For each $n \in \IN$, the theory $\ind\Sigma_n+\exp$ polynomially simulates $\bd\Sigma_{n+1}+\exp$
with respect to $\Pi_{n+2}$~sentences.
Moreover, the $\Pi_{n+2}$-conservativity of $\bd\Sigma_{n+1}+\exp$ over $\ind\Sigma_n+\exp$
is provable in $\PV$.
\end{cor}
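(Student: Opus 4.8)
The plan is to read the first assertion off Theorem~\ref{thm:IBfint} together with Theorem~\ref{thm:FI>nspeedup}, and then to confirm that the whole argument formalizes in $\PV$.

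For the simulation, I would fix $n\in\IN$ and apply Theorem~\ref{thm:FI>nspeedup} with $T=\ind\Sigma_n+\exp$, $T'=\bd\Sigma_{n+1}+\exp$, with $\Gamma$ the class of $\Pi_{n+2}$~sentences, and with $\tau=\tau(n)$. By Theorem~\ref{thm:IBfint}, $\tau(n)$ is a polynomial forcing interpretation of $T'$ in $T$ that is polynomially $\Pi_{n+2}$-reflecting, so Theorem~\ref{thm:FI>nspeedup} immediately gives that $T$ polynomially simulates $T'$ with respect to $\Pi_{n+2}$~sentences. Unwinding the simulating procedure: given a $T'$-proof $\pi$ of a $\Pi_{n+2}$~sentence $\gamma$, one uses Corollary~\ref{cor:logic} to construct in polynomial time a $T$-proof of $\fain s{\Cond[\tau(n)]}{(s\forces_{\tau(n)}\gamma)}$, appends the polynomial-time-constructible $T$-proof of $\exin s{\Cond[\tau(n)]}{(s\forces_{\tau(n)}\gamma)}\then\gamma$ supplied by Theorem~\ref{thm:IBfint} together with the $T$-proof of $\ex s{(s\in\Cond[\tau(n)])}$ given by~\ref{item:fint/ex_Cond}, and finishes with a couple of modus-ponens inferences.

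For the $\PV$ part I would let $f(n,\pi)$ name the composite polynomial-time function just described --- so that whenever $\pi$ is a $\bd\Sigma_{n+1}+\exp$-proof of a $\Pi_{n+2}$~sentence $\gamma$, $f(n,\pi)$ is an $\ind\Sigma_n+\exp$-proof of $\gamma$ --- and note that, since $\PV$ has function symbols for $f$ and for the relevant proof-checking predicates, the conservativity statement is the universal closure of an open $\PV$-formula, essentially
\[
 \mathrm{Prf}_{\bd\Sigma_{n+1}+\exp}(\pi,\gamma)\wedge\gamma\in\Pi_{n+2}\then\mathrm{Prf}_{\ind\Sigma_n+\exp}(f(n,\pi),\gamma).
\]
It then suffices to prove this in $\PV$, which I would do stage by stage. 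The passages through the simple translations are covered by the $\PV$-provable forms of Lemmas~\ref{lem:fma-rel} and~\ref{lem:pf-rel} (read off Visser's constructions); the reflection proofs output by the procedure of Theorem~\ref{thm:IBfint} --- whose correctness, as already remarked, is $\PV$-verifiable --- and the concluding modus-ponens steps are pure proof-checking and need no induction. The one stage with genuine content is the line-by-line transformation of Proposition~\ref{prop:logic+}: there I would arrange that after processing the $i$-th line of $\pi^*$ one holds an explicit partial derivation $g(i)$, with $g$ polynomial-time, so that the invariant ``$g(i)$ is a valid $\ind\Sigma_n+\exp$-derivation whose last line is the forcing translation of the $i$-th line of $\pi^*$'' becomes an open $\PV$-formula and hence accessible to the induction $\PV$ supports.

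The main thing requiring care --- and really the only delicate point --- is to see that no step of Section~\ref{sec:forcing} or of the proof of Theorem~\ref{thm:IBfint} quietly invokes more than polynomial-time reasoning about syntax: every induction on formula or proof structure has to be recast, as above, with a polynomial-time-computable witness so that it falls within $\PV$'s induction, and the codes defining $\Cond[\tau(n)]$, $\extendseq_{\tau(n)}$, and $\forces_{\tau(n)}$ --- which mention the universal $\Sigma_{n+1}$~satisfaction predicate --- have to be produced by honestly polynomial-time operations on the numeral for $n$. This is routine, if laborious, formalized metamathematics of the kind carried out in~\cite{ci:instrumentalism}, so I do not expect it to present a real obstacle.
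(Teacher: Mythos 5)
Your proposal is correct and takes the same route as the paper's own (very terse) proof: read the simulation off Theorem~\ref{thm:IBfint} combined with Theorem~\ref{thm:FI>nspeedup}, and get the $\PV$-provability by formalizing that entire chain of syntactic constructions, with the only subtle point being that the structural inductions in Proposition~\ref{prop:logic+} and in the supporting lemmas must be recast as open inductions on explicit polynomial-time-computable partial derivations so that they fall within the induction $\PV$ supports. Your expanded account of what "formalizing in $\PV$" means is accurate and fills in exactly the details the paper leaves implicit.
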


\begin{proof}
The first part follows directly from Theorem~\ref{thm:IBfint} and Theorem~\ref{thm:FI>nspeedup}.
The second part is obtained by formalizing the proof of Theorem~\ref{thm:IBfint} for a fixed $n \in \IN$
in $\PV$.
\end{proof}

In fact, the proof of Theorem~\ref{thm:IBfint} is witnessed by a single polynomial-time algorithm that
takes $n$ (in unary) as input, and this algorithm can also be formalized in $\PV$. As a result,
we can conclude that the $\Pi_{n+2}$-conservativity of $\bd\Sigma_{n+1}+\exp$ over $\ind\Sigma_n+\exp$
holds provably in $\PV$ for any $n$ for which the axioms of $\bd\Sigma_{n+1}+\exp$ over $\ind\Sigma_n+\exp$
actually exist as finite strings --- that is, for any $n$ in the definable cut $\mathrm{Log}$, the domain of the exponential function.

\begin{cor}\label{cor:coll-ind-O1}
$\PV$ proves that for every~$n \in \mathrm{Log}$, the theory $\bd\Sigma_{n+1}+\exp$
is $\Pi_{n+2}$-conservative over $\ind\Sigma_n+\exp$.
\end{cor}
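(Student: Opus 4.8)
The plan is to view the proof of Theorem~\ref{thm:IBfint}, combined with the passage from forcing interpretations to polynomial simulations in Theorem~\ref{thm:FI>nspeedup}, as describing a single polynomial-time algorithm~$B$ that takes as input a number~$n$ in unary, a $\Pi_{n+2}$~sentence~$\gamma$, and a proof~$\pi$ of~$\gamma$ in $\bd\Sigma_{n+1}+\exp$, and returns a proof of~$\gamma$ in $\ind\Sigma_n+\exp$. Concretely, $B$ would first run the procedure of Lemma~\ref{lem:pf-rel} on~$\pi$ to obtain a proof all of whose formulas are simple; then, using that $\tau(n)$ is a \emph{polynomial} forcing interpretation of $\bd\Sigma_{n+1}+\exp$ in $\ind\Sigma_n+\exp$ (established uniformly in~$n$ by Lemma~\ref{lem:ultrapower-fint} together with the ``interpretation of $\bd\Sigma_{n+1}+\exp$'' part of the proof of Theorem~\ref{thm:IBfint}), it would apply the algorithm of Corollary~\ref{cor:logic} to turn the simplified proof into an $\ind\Sigma_n+\exp$-proof of $\fain s{\Cond[\tau(n)]}{(s\forces_{\tau(n)}\gamma)}$; next it would weaken this, using a proof of clause~\ref{item:fint/ex_Cond} supplied by Lemma~\ref{lem:ultrapower-fint}, to a proof of $\exin s{\Cond[\tau(n)]}{(s\forces_{\tau(n)}\gamma)}$; and finally it would concatenate the $\ind\Sigma_n+\exp$-proof of $\exin s{\Cond[\tau(n)]}{(s\forces_{\tau(n)}\gamma)}\then\gamma$ output by the algorithm from the strengthened, uniform reflection clause of Theorem~\ref{thm:IBfint} and derive $\gamma$ by modus ponens. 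Every one of these subroutines runs in time polynomial in its inputs (with $n$ given in unary), so $B$ is a single $\PV$-function.

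The second step is to check that $\PV$ proves~$B$ correct, uniformly in~$n$: that $\PV$ proves that, for all $n\in\mathrm{Log}$, all $\Pi_{n+2}$~sentences~$\gamma$, and all $\bd\Sigma_{n+1}+\exp$-proofs~$\pi$ of~$\gamma$, the string $B(n,\gamma,\pi)$ is a valid proof of~$\gamma$ in $\ind\Sigma_n+\exp$. For a fixed standard~$n$ this is precisely the $\PV$-provability asserted in Corollary~\ref{cor:coll-ind}; the new point is that the verification is essentially insensitive to~$n$. Indeed, the correctness statements involved --- that $\pi^{*}$ is a proof with the expected conclusion, that $\tau(n)$ satisfies clauses~\ref{item:fint/ex_Cond}--\ref{item:fint/density} and forces the axioms of $\bd\Sigma_{n+1}+\exp$, that the construction behind Corollary~\ref{cor:logic} and Proposition~\ref{prop:logic+} propagates forcedness through proof trees, and that the reflection proof is sound --- are all proved by combinatorial inductions on the structure of terms, formulas and proofs in which $n$ figures merely as a parameter pinning down which universal satisfaction predicate $\Sigma_{n+1}\hyp\Sat$ is in play; none of these is an induction on~$n$, and none exceeds what $\PV$ can verify about bounded syntactic objects. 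The restriction $n\in\mathrm{Log}$ enters only to guarantee that the finite axiomatizations of $\bd\Sigma_{n+1}+\exp$ and $\ind\Sigma_n+\exp$ --- which refer to $\Sigma_{n+1}\hyp\Sat$ --- exist as strings, so that the conservativity statement can even be formulated for a given~$n$ and $B$ has the right data to work on.

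Granting this, $\PV$ proves: for every $n\in\mathrm{Log}$, if $\bd\Sigma_{n+1}+\exp\proves\gamma$ for some $\Pi_{n+2}$~sentence~$\gamma$, then applying~$B$ to a witnessing proof yields $\ind\Sigma_n+\exp\proves\gamma$; that is, $\PV$ proves that $\bd\Sigma_{n+1}+\exp$ is $\Pi_{n+2}$-conservative over $\ind\Sigma_n+\exp$ for each $n\in\mathrm{Log}$. I expect the only real difficulty to be the uniformity check in the second paragraph: one must make sure that nothing in the proof of Theorem~\ref{thm:IBfint} (or in Lemmas~\ref{lem:ultrapower-fint} and~\ref{prop:los}, or in the generic apparatus of Section~\ref{sec:forcing}) tacitly relies on a feature of~$n$ available only for standard~$n$, and that the meta-level inductions certifying the various proof constructions can be rearranged as $\PV$-proofs carrying~$n$ as a free variable even though $\Sigma_{n+1}\hyp\Sat$ grows with~$n$. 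Once this is in place, the corollary follows by routine bookkeeping.
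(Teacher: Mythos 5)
Your proposal is correct and follows essentially the same route as the paper: the paper's proof of Corollary~\ref{cor:coll-ind-O1} is exactly the observation that the proof of Theorem~\ref{thm:IBfint} is witnessed by a single $\PV$-formalizable polynomial-time algorithm taking~$n$ in unary, with $n\in\mathrm{Log}$ imposed so that the finite axiomatizations (built from $\Sigma_{n+1}\hyp\Sat$) exist as strings. Your write-up just spells out, more explicitly than the paper does, how this algorithm is assembled from Lemma~\ref{lem:pf-rel}, Corollary~\ref{cor:logic}, clause~\ref{item:fint/ex_Cond}, and the strengthened reflection clause of Theorem~\ref{thm:IBfint}, and why its $\PV$-verification is uniform in~$n$.
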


As the reader may have noticed,
 one can actually extract from the truncated generic $\Sigma_{n+1}$ 
 ultrapower construction
   (alternatively, from the generic cut construction for $n>0$)
  a polynomial forcing interpretation of $\WKL_0^*+\bd\Sigma^{0}_{n+1}$ in $\ind\Sigma_n+\exp$
   that is polynomially $\Pi_{n+2}$-reflecting.

\begin{thm}\label{thm:IBWKL}
For each $n \in \IN$,
the theory $\ind\Sigma_n+\exp$ polynomially simulates $\WKL_0^*+\bd\Sigma^0_{n+1}$
with respect to $\Pi_{n+2}$~sentences.
\end{thm}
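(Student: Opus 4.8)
The plan is to extend the forcing interpretation $\tau(n)$ of $\bd\Sigma_{n+1}+\exp$ in $\ind\Sigma_n+\exp$ furnished by Theorem~\ref{thm:IBfint} to a forcing interpretation $\hat\tau(n)$ of the full second-order theory $\WKL_0^*+\bd\Sigma^0_{n+1}$, and then to apply Theorem~\ref{thm:FI>nspeedup}. The conditions, the relation $\extendseq_{\tau(n)}$, the first-order names, and the relations $s\forces_{\tau(n)}v\defd$, $s\forces_{\tau(n)}\alpha(\bar v)$ for simple atomic first-order $\alpha$ are kept exactly as before; model-theoretically they describe the truncation $\sup_K M$ of a generic $\Sigma_{n+1}$ ultrapower $K$ of the ground model $M$. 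On top of this I would introduce a second sort of names, again coded by $\Sigma_{n+1}$~formulas with two free variables, but now read as $\Sigma_{n+1}$-definable functions $V$ whose value $\val V x$ at $x$ is a code for a finite set under the Ackermann interpretation. Such a $V$ represents the coded subset $\{c\in K:\text{for almost all }x,\ \val c x\in_{\mathrm{Ack}}\val V x\}$ of the ultrapower $K$, restricted to $\sup_K M$; this set always exists as a member of $\Cod(K/\sup_K M)$, so I would put $s\forces_{\hat\tau(n)}V\defd$ to hold for every condition $s$, and, for the simple atomic formula $v\in V$, set $s\forces_{\hat\tau(n)}(v\in V)$ to mean $s\forces v\defd$ together with $\aallin x{\Ext_{n+1}(s)}{(\val v x\defd\land\val V x\defd\then\val v x\in_{\mathrm{Ack}}\val V x)}$ --- in direct analogy with Definition~\ref{def:our-forcing} and the atomic clause of Definition~\ref{defn:name-defd}.

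First I would check that $\hat\tau(n)$ is a forcing interpretation of second-order logic in $\ind\Sigma_n+\exp$, that is, that it satisfies \ref{item:fint/ex_Cond}--\ref{item:fint/density} with the proofs constructible in time polynomial in $n$. Everything here except the treatment of atoms $v\in V$ is literally Lemma~\ref{lem:ultrapower-fint}; for the monotonicity clause \ref{item:fint/monotone} for $v\in V$ the definition makes it immediate, and for the density clause \ref{item:fint/density} one argues as in the density part of the proof of Lemma~\ref{lem:ultrapower-fint}: if $s\not\forces v\in V$, then either $s\not\forces v\defd$ (dealt with in the first-order case) or $v\defd$ holds densely but $\val v x\notin_{\mathrm{Ack}}\val V x$ for cofinally many $x\in\Ext_{n+1}(s)$, and after passing to a condition on which $v$ is single-valued and bounded --- as in Lemma~\ref{lem:weak-strong-defd} --- one restricts to the $\Sigma_{n+1}$-definable set of such $x$ to obtain a condition below which $v\in V$ is nowhere forced. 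In the same breath I would record the second-order refinements of Lemma~\ref{lem:D0elem} and Lemma~\ref{prop:los} (\L o\'s's Theorem): these go through as before once one notes that a simple atomic formula $v\in V$ evaluated at a coordinate $x$ becomes the $\Delta_0$ condition $\val v x\in_{\mathrm{Ack}}\val V x$, so that a $\Sigma^0_m$ (resp.\ $\Pi^0_m$) formula with second-order names contributes, at each coordinate, a $\Sigma_m$ (resp.\ $\Pi_m$) matrix once the names are unwound.

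The core of the argument is clause~\ref{item:fint/theory}: every condition must force the axioms of $\WKL_0^*+\bd\Sigma^0_{n+1}$. The discrete-ordered-semiring axioms and $\exp$, being parameter-free first-order sentences, are already forced by Theorem~\ref{thm:IBfint}. The $\Delta^0_0$~induction instances with set parameters are forced by a short argument combining the second-order refinement of Lemma~\ref{prop:los} with $\ind\Delta_0$ in the ground model; and $\Sigma^0_{n+1}$~collection is forced by replaying the argument that forces $\bd\Sigma_{n+1}$ in the proof of Theorem~\ref{thm:IBfint}, again feeding in the second-order refinement of Lemma~\ref{prop:los} (the set parameters only add bounded-quantifier $\Sigma_{n+1}/\Pi_{n+1}$ content once the names are unwound) and, for $n\ge1$, using strong $\Sigma_n$~collection to pass the elementarity from $K$ to $\sup_K M$. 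Once $\Sigma^0_1$~collection is available, $\Delta^0_1$~comprehension and Weak K\"onig's Lemma both follow from forcing the $\Sigma^0_1$~separation scheme, just as in Lemma~\ref{lem:rca-I-fint-rt22}. I expect this last point to be the main obstacle: unlike in the finite-condition forcing of Section~\ref{sec:ramsey}, there is no largest element of a condition to bound the separating set, so one must realise the separating set as a $\Sigma_{n+1}$-definable function --- e.g.\ the one sending $x$ to a code for the set of $k<x$ separated by the first $x$ witnesses at coordinate $x$ --- and then verify, by a \L o\'s-style computation from the forced hypothesis, that this name does the job. (For $n>0$ there is also the alternative, noted in the remark after Theorem~\ref{thm:IBfint}, of running the generic cut construction of Section~\ref{sec:ramsey} relativised to $0^{(n-1)}$, in which the step forcing $\RT^2_2$ is replaced by the much easier step forcing the infinite $\Delta_0$~pigeonhole principle; this gives an alternative $\hat\tau(n)$ for $n>0$.)

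Finally, $\hat\tau(n)$ will be polynomial because the forcing of the $\Sigma^0_{n+1}$~collection, $\Delta^0_0$~induction, and $\Delta^0_1$~comprehension schemes is carried out uniformly in the formula by the polynomial-time procedures described above, while $\WKL$ and the finitely many semiring axioms are handled individually --- just as in the proof of Lemma~\ref{lem:rca-I-fint-rt22}. And $\hat\tau(n)$ will be polynomially $\Pi_{n+2}$-reflecting essentially for free: here $\Pi_{n+2}$ means purely first-order $\Lone$~sentences, and for such a sentence $\gamma$ no second-order name occurs, so $s\forces_{\hat\tau(n)}\gamma$ is syntactically $s\forces_{\tau(n)}\gamma$, and the required proof of $\fain s{\Cond[\tau(n)]}{(s\forces\gamma)}\then\gamma$ is obtained from the one produced by the strengthened reflection clause of Theorem~\ref{thm:IBfint}. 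An application of Theorem~\ref{thm:FI>nspeedup} then gives that $\ind\Sigma_n+\exp$ polynomially simulates $\WKL_0^*+\bd\Sigma^0_{n+1}$ with respect to $\Pi_{n+2}$~sentences, as claimed.
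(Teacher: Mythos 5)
Your proposal is correct in outline, but it takes a genuinely different route from the paper's. The paper proves Theorem~\ref{thm:IBWKL} in three lines by \emph{composition}: it observes that there is already a polynomial forcing interpretation of $\WKL_0^*+\bd\Sigma^0_{n+1}$ in $\bd\Sigma_{n+1}+\exp$ that polynomially reflects $\Lone$ sentences (citing H\'ajek's Theorem~3.13(a) for $n\ge1$ and Avigad's Sections~5--8 for $n=0$), composes this with $\tau(n)$ from Theorem~\ref{thm:IBfint}, and invokes Theorem~\ref{thm:FI>nspeedup}. You instead extend $\tau(n)$ \emph{directly} to a two-sorted forcing interpretation $\hat\tau(n)$ of the full second-order theory, introducing second-order names as $\Sigma_{n+1}$-definable functions with Ackermann-coded finite-set values --- this is precisely the construction the paper gestures at, but does not carry out, in the sentence ``one can actually extract from the truncated generic $\Sigma_{n+1}$ ultrapower construction \dots\ a polynomial forcing interpretation of $\WKL_0^*+\bd\Sigma^0_{n+1}$ in $\ind\Sigma_n+\exp$'' immediately preceding the theorem. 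The composition route buys brevity and reuses substantial prior work; your route is self-contained and makes the generic structure $(\sup_K M,\Cod(K/\sup_K M))$ explicit, at the cost of having to re-verify, in the syntactic setting, second-order versions of Lemma~\ref{prop:los}, the clauses \ref{item:fint/densedefd}--\ref{item:fint/density} for atoms $v\in V$, and the forcing of $\Sigma^0_1$ separation via a name for the separating set (your sketch of which is plausible but would need to be checked carefully, in particular that the \L o\'s computation certifying $s\forces\forall k\,(\sigma_1(k)\then k\in V)\land\forall k\,(k\in V\then\neg\sigma_2(k))$ only requires $\ind\Sigma_n$ in the ground model). Your final reflection step is exactly right and is where the two approaches converge: since $\Pi_{n+2}$ sentences are first-order, $s\forces_{\hat\tau(n)}\gamma$ coincides syntactically with $s\forces_{\tau(n)}\gamma$, so the strengthened reflection clause of Theorem~\ref{thm:IBfint} carries over verbatim.
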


\begin{proof}
First, notice there exists
 a polynomial forcing interpretation of $\WKL_0^*+\bd\Sigma^0_{n+1}$
  in $\bd\Sigma_{n+1}+\exp$ that polynomially reflects $\Lone$~sentences.
For $n\geq1$, this follows from
 Theorem~3.13(a) in H\'ajek~\cite{incoll:hajek/interpret}.
If $n=0$, then such a forcing interpretation
 can be extracted from Sections~5--8 in Avigad~\cite{art:formforce}. 
Then compose with $\tau(n)$ and invoke Theorem~\ref{thm:FI>nspeedup}.
\end{proof}

\section*{Acknowledgements}
Ko{\l}odziejczyk was partially supported by grant 2017/27/B/ST1/01951 of the National
Science Centre, Poland.
Wong was financially supported by the Institute of Mathematics
  of the Polish Academy of Sciences and 
 then by the Singapore Ministry of Education
  Academic Research Fund Tier~2 grant
   MOE2016-T2-1-019 / R146-000-234-112
    when this research was carried out and
    when this paper was written up.
Yokoyama was partially supported by
JSPS KAKENHI (grant numbers 19K03601 and 15H03634), JSPS
Core-to-Core Program (A.~Advanced Research Networks),
 and JAIST Research Grant 2019 (Houga).
Wong and Yokoyama acknowledge the support of JSPS--NUS grants
 R146-000-192-133 and R146-000-192-733
  during the course of this work.

\bibliographystyle{plain}
\bibliography{arith}

\end{document}